\numberwithin{equation}{section}
\theoremstyle{plain}
\newtheorem{theorem}{Theorem}[section]
\newtheorem*{theorem*}{Theorem}
\newtheorem{lemma}[theorem]{Lemma}
\newtheorem{proposition}[theorem]{Proposition}
\newtheorem{corollary}[theorem]{Corollary}
\theoremstyle{definition}
\theoremstyle{remark}
\newtheorem{remark}[theorem]{Remark}
\newcommand{\bd}{\partial}
\newcommand{\R}{\mathbb{R}}
\newcommand{\Q}{\mathbb{Q}}
\newcommand{\N}{\mathbb{N}}
\newcommand{\ud}{\,\textnormal{d}}
\newcommand{\dist}{\text{dist}}
\newcommand{\virg}[1]{``#1''}
\newcommand{\udH}{\ud\mathcal H^{N-1}}
\let\dist\undefined
\newcommand{\dist}{\textnormal{dist}}
\newcommand{\sd}{\textnormal{sd}}
\newcommand{\Ls}{\mathcal{L}_s}
\newcommand{\h}{^{(h)}}
\title{Asymptotic of the Discrete Volume-Preserving Fractional Mean Curvature Flow\\
via a Nonlocal Quantitative Alexandrov Theorem}
\author[1]{Daniele De Gennaro}
\author[2]{Andrea Kubin}
\author[3]{Anna Kubin}
\affil[1]{Dauphine University, Paris France}
\affil[2]{Technische Universität München, Munich Germany}
\affil[3]{Politecnico di Torino, Turin, Italy}
\date{}
\begin{document}
\maketitle

\begin{abstract}
\noindent We characterize the long time behaviour of a discrete-in-time approximation of the volume preserving fractional mean curvature flow. In particular, we prove that the discrete flow starting from any bounded set of finite fractional perimeter converges exponentially fast to a single ball.  As an intermediate result we establish a quantitative Alexandrov type estimate for normal deformations of a ball. Finally, we provide existence for flat flows as limit points of the discrete flow when the time discretization parameter tends to zero.
\end{abstract}

\tableofcontents

\section*{Introduction}
We consider the geometric evolution of sets called \textit{the volume preserving fractional mean curvature flow}. The \textit{classical mean curvature flow} is defined as a flow of sets $(E_t)_{0\leq t\leq T}$ in $\R^N$ following the motion law
\begin{equation*}
	v_t=-H_{E_t}\quad \text{on} \quad \bd E_t,
\end{equation*}
where $v_t$ denotes the component of the velocity relative to the outer normal vector of $\bd E_t$  and $H_E$ is the mean curvature of the set $E$.
In order to include the volume constraint, one can consider the following velocity
\begin{equation*}
	v_t= \bar{H}_{E_t}- H_{E_t}\quad \text{on} \quad \bd E_t
\end{equation*}
for all $t\in[0,T]$ , where $\bar{H}_{E_t}$ denotes the average of $H_{E_t}$ over $\bd E_t$. 
The defined geometric evolution is called  \textit{volume preserving mean curvature flow}, as one can observe that the volume of the evolving sets is constant.

In the fractional setting, the velocity of the flow is given by the \textit{fractional mean curvature}, a geometric quantity introduced by Caffarelli, Roquejoffre and Savin in \cite{CRS} and defined as the first variation of the fractional perimeter functional. The latter functional is defined on a measurable set $E\subset \R^N$ simply as
\[ P^s(E)=\int_E\int_{E^c} \dfrac1{|x-y|^{N+s}}\ud x\ud y. \]
It turns out that its first variation on any $C^2$ set $E$ is given by the formula 
\[ H^s_E(x):=\int_{\R^N}\dfrac{\chi_E(y)-\chi_{E^c}(y)}{|x-y|^{N+s}}\ud y\qquad \forall x\in\bd E. \]
In both the previous formulae, the integrals are intended in the principal value sense. In analogy with the classical case, the evolution law for the \textit{volume preserving fractional mean curvature flow} is given by 
\begin{equation}
	v_t=\bar H^s_{E_t}-H^s_{E_t}\quad \text{on} \quad \bd E_t,
	\label{evolution law}
\end{equation}
with the notations previously introduced.

Up to now, a satisfactory study of this type of evolution is still missing. While the evolution without the volume constraint is well-understood (see e.g. \cite{CMP2015,Imb}), the lack of a comparison principle in our case makes the study much harder. Moreover, the generated flow may present singularities of different kinds, as happens for the classical mean curvature flow: see \cite{CSV2018} some some explicit examples of pinch-like singularities.  In \cite{JLaM} a short-time existence is provided for the smooth flow \eqref{evolution law}, while existence of the smooth flow starting from convex sets (under suitable assumptions) is proved in \cite{CSV}.

We will then follow the approach of the celebrated papers by Luckhaus and Sturzenhecker \cite{LS} and Almgren, Taylor and Wang \cite{ATW} consisting in building a discrete-in-time approximation of the flow via a variational approach and then sending the time discretization parameter to zero. Our approach follows closely the work done by Mugnai, Seis and Spadaro in \cite{MSS}, where the variational problem studied incorporates a volume penalization to take into account the volume constraint.  First of all we define a discrete-in-time approximation of the flow that will be called the \textit{discrete flow}. Given any initial set $E_0$, with $|E_0|=m$, and a time-step $h>0$ we define $E_0\h:=E_0$ and, iteratively, for $n> 0$
\[ E_{n+1}\h\in \text{argmin}\left\lbrace P^s(F)+\dfrac 1h \int_{F} \sd_{ E_n\h}(x)\ud x + \dfrac 1{h^{\frac s{1+s}}}||F|-m|: F \subset \R^N \textnormal{ measurable} \right\rbrace,  \]
where $\sd_{ E_n\h}$ is the signed distance function from the set $ E_n\h$. We can define for every $t \ge 0$, the discrete flow by $E\h(t):=E_{[t/h]}\h$. We will prove that such a flow is well-defined. Any $L^1_{loc}$-limit point of this flow as the time-step $h$ converges to zero will be called a \textit{flat flow}. 
For the classical mean curvature flow, under the hypothesis of convergence of the perimeters, this approach produces global-in-time distributional solutions of the evolution law \ref{evolution law}, as shown in \cite{MSS}. In the fractional case, we fall short of this result. Nonetheless, we will prove in the Appendix the existence of flat flows, defined simply as $L^1_{loc}-$limit points of the discrete flow as $h\to0$, and address some of its continuity properties. Moreover, under the additional hypothesis of boundedness of the flow, we will prove that the flat flow is volume-preserving.

In the recent years, the study of the long time behaviour of the volume preserving mean curvature flow has attracted more and more attention. In the classical case, after some preliminary studies \cite{ES, Hui}, in a recent paper \cite{MPS}  the authors proved the asymptotic behaviour of the classical discrete flow by showing its convergence to unions of equal balls. Then, they improved their results  in \cite{JMPS}, proving uniform estimates with respects to the time parameter $h$  in dimension $N=2$, thus obtaining the same result for the classical flat flow. Also the situation in the periodic setting is quite studied, with results for the discrete flow in \cite{DeKu} and for the flat flow in dimensions $N=3,4$ in \cite{Nii}. In the fractional setting some recent results have been proved. For example, in \cite{CSV} the authors prove that the smooth flow starting from a convex set converges to a ball, up to translations possibly depending on time and under the hypothesis of equiboundedness for the fractional curvatures along the flow.

In this paper the long-time convergence analysis is developed in the fractional setting. The main result of the paper is Theorem \ref{main result} below. It provides a complete characterization of the long-time behaviour of the discrete fractional mean curvature flow starting from any bounded set of finite fractional perimeter, providing also an estimate on the convergence speed. We will assume that the dimension $N$ is such that any $\Lambda-$minimizer of the fractional perimeter is a smooth set. Namely, we will assume that either:
\begin{itemize}
    \item $N=2$;
    \item $N\le 7$ and $s\in(s_0,1)$, where $s_0$ is the constant of Proposition \ref{prop density estimates}, item \textit{ii)}.
\end{itemize}
This is a technical hypothesis that could be dropped if we knew that the evolving sets were smooth. In particular, it is essential to characterize the possible long-time limit points for the discrete flow. We are then able to prove the following result.
 
\begin{theorem}\label{main result}
Let $m$, $M>0$ and let $E_0$ be an initial bounded set with $P^s(E_0)\le M$, $|E_0|=m$. Then, for $h=h(s,M,m)>0$ small enough the following holds: for any discrete flow $E_n\h$ starting from $E_0$, there exists $\xi\in\R^N$ such that 
\[ E_n\h-\xi\to B^{(m)}\quad \text{in} \quad C^k \]
for all $k\in\N.$ Moreover, the convergence is exponentially fast.
\end{theorem}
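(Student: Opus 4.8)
My plan is to establish the theorem in three stages: a monotonicity/dissipation analysis of the discrete scheme, a rigidity step forcing convergence to a single ball, and finally an exponential rate via the quantitative nonlocal Alexandrov estimate advertised in the abstract. First I would record the basic energy inequality satisfied by minimizers of the incremental problem: testing the minimality of $E_{n+1}\h$ against the competitor $E_n\h$ gives
\[
P^s(E_{n+1}\h)+\frac1h\int_{E_{n+1}\h}\sd_{E_n\h}\ud x+\frac1{h^{s/(1+s)}}\big||E_{n+1}\h|-m\big|\le P^s(E_n\h),
\]
and since $\int_{E_{n+1}\h}\sd_{E_n\h}\ud x\ge \tfrac12\int_{E_{n+1}\h\triangle E_n\h}|\sd_{E_n\h}|\ud x\ge 0$, the fractional perimeters $P^s(E_n\h)$ are nonincreasing in $n$, hence bounded by $M$, and the "distance-type" dissipation terms form a summable series. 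This yields that $E_n\h$ is a bounded sequence of sets of uniformly bounded fractional perimeter, and that consecutive iterates get $L^1$-close; by compactness of $BV^s$-type embeddings, subsequential $L^1_{loc}$-limit points exist. Along the way I would also need the almost-volume-preservation $\big||E_n\h|-m\big|\to 0$, which follows from the volume penalization term and the summability.

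The heart of the argument is to identify every limit point as a ball. The key observation is that, because $P^s(E_n\h)$ decreases to a limit and the dissipation is summable, limit points are (minimal-movement-)stationary, i.e.\ they are critical points of $P^s$ under the volume constraint, equivalently sets of constant fractional mean curvature; more precisely each $E_{n+1}\h$ is a $\Lambda$-minimizer of $P^s$ with $\Lambda\sim h^{-s/(1+s)}$, so by Proposition \ref{prop density estimates} and the standing dimensional hypothesis ($N=2$, or $N\le7$ and $s\in(s_0,1)$) the iterates are uniformly smooth with uniform $C^{1,\alpha}$ (indeed $C^k$) estimates. The nonlocal Alexandrov theorem then forces a connected set with constant $H^s$ to be a single ball; to rule out disjoint unions of several balls one argues, as in \cite{MPS}, that the penalized scheme strictly decreases energy when two components are present unless they can be merged — configurations of several balls are not stable under the minimizing movement because moving mass between components (or the fact that a single ball of volume $m$ has strictly smaller $P^s$ than any union of smaller balls totaling $m$, by the fractional isoperimetric inequality) allows a competitor with lower energy. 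Hence the only admissible limit is the ball $B^{(m)}$, and by translating we fix the center $\xi$.

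For the exponential rate I would run the quantitative Alexandrov estimate in earnest. Once $E_n\h$ is $C^1$-close to $B^{(m)}$ (after recentering), write $\bd E_n\h$ as a normal graph $\psi_n$ over $\bd B^{(m)}$; the quantitative Alexandrov inequality for normal deformations of a ball — the intermediate result of the paper — should read schematically $\|H^s_{E}-\bar H^s_{E}\|_{L^2(\bd E)}\gtrsim \|\psi\|_{H^{(1+s)/2}}$ or a comparable norm, controlling the deformation by the oscillation of the curvature. Combining this with the discrete dissipation inequality — which, after expanding $\int \sd_{E_n\h}$ to second order and using the Euler–Lagrange equation $H^s_{E_{n+1}\h}+\tfrac1h\sd_{E_n\h}=\text{const}$ on $\bd E_{n+1}\h$, relates the energy gap $P^s(E_n\h)-P^s(E_{n+1}\h)$ to $h\|\bar H^s-H^s\|_{L^2}^2$ up to higher-order terms — produces a discrete Grönwall inequality of the form $D_{n+1}\le (1-ch)D_n$ for the energy deficit $D_n:=P^s(E_n\h)-P^s(B^{(m)})$, giving $D_n\le(1-ch)^n D_0$, i.e.\ exponential decay of the deficit. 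Standard interpolation (deficit controls $L^1$-distance via quantitative fractional isoperimetry, which upgrades to $C^k$ via the uniform regularity estimates and elliptic bootstrapping for the fractional curvature operator) then transfers the exponential rate to the $C^k$ convergence, and a telescoping argument shows the recentering translations $\xi_n$ form a Cauchy sequence converging to the final $\xi$.

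The main obstacle I anticipate is the second step in two guises: first, making the nonlocal quantitative Alexandrov estimate quantitatively compatible with the discrete incremental scheme — one must expand the signed-distance penalization and the fractional perimeter simultaneously to second order around the ball and show the cross terms are genuinely higher order uniformly in $h$, which is delicate because $\sd_{E_n\h}$ is only as regular as $\bd E_n\h$ and the fractional operator is nonlocal; and second, excluding multi-bubble limit points in the penalized (rather than purely constrained) setting, where the volume is not exactly conserved along the flow, so the fractional isoperimetric comparison must be run with the slack $\big||E_n\h|-m\big|$ carried through and absorbed. Controlling the interaction between distant components under the nonlocal energy (which, unlike the classical perimeter, sees them) is exactly where the boundedness hypothesis and the summable dissipation must be leveraged carefully.
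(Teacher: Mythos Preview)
Your three-stage outline matches the paper's architecture (itself adapted from \cite{MPS}), but the implementation of the two decisive steps differs, and your handling of the multi-bubble exclusion has a genuine gap.

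\textbf{Ruling out multiple balls.} Your argument---that a single ball has strictly smaller $P^s$ than any union, so the scheme ``strictly decreases energy when two components are present''---does not suffice: the discrete flow is a gradient-type scheme and could in principle converge to a \emph{critical} configuration that is not the global minimizer. The paper's route is different and cleaner. It first proves (Proposition~\ref{prop up to }) that any limit point is a union of $K$ equal balls and establishes exponential convergence to such a union \emph{without yet knowing} $K=1$; only at the very end does it invoke the nonlocal rigidity result (Proposition~\ref{prop 3.1 new}, via \cite{CFMMT,CiFiMaNo}) that a bounded set with constant $H^s$ is a \emph{single} ball. The nonlocal interaction you flag as an obstacle is in fact the mechanism: a disjoint union of balls does not have constant fractional mean curvature, hence cannot be stationary for the scheme. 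No energy comparison or merging argument is needed.

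\textbf{Exponential rate.} Your proposed Gr\"onwall inequality $D_{n+1}\le(1-ch)D_n$ on the perimeter deficit, via second-order expansion of the signed-distance term coupled with the Euler--Lagrange relation, is plausible but delicate (as you acknowledge). The paper bypasses this. It tests minimality of $E_n\h$ against two competitors: the previous iterate $E_{n-1}\h$, giving $\sum_{k\ge n-1}\mathcal D(E_k\h,E_{k-1}\h)\le P^s(E_n\h)-KP^s(B^{(m/K)})$, and the union $\mathcal B_n$ of balls placed at the previous barycenters, giving $P^s(E_n\h)-KP^s(B^{(m/K)})\le C\,\mathcal D(E_{n-1}\h,E_{n-2}\h)$ by concavity. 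Combining these yields a Gr\"onwall inequality on the \emph{dissipations} (not the deficit), and \cite[Lemma~3.10]{MPS} gives exponential decay. The quantitative Alexandrov estimate enters only through the dissipation--dissipation lemma $\mathcal D(B^{(m)},B^{(m)}_{f_2})\le C\,\mathcal D(B^{(m)}_{f_2},B^{(m)}_{f_1})$, which controls the barycenters and produces a Cauchy sequence of translations converging to a fixed $\xi$. A minor slip: your inequality $\int_{E_{n+1}\h}\sd_{E_n\h}\ge\tfrac12\int_{E_{n+1}\h\triangle E_n\h}|\sd_{E_n\h}|$ is false as written, since the left side includes the negative contribution from $E_{n+1}\h\cap E_n\h$; the correct dissipation bound comes from comparing $\mathcal F_h(E_{n+1}\h,E_n\h)$ with $\mathcal F_h(E_n\h,E_n\h)$.
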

We stress the difference between our result and the one holding in the classic setting, where the limit points of the discrete flow are in general unions of disjointed balls having the same radius. This is a peculiar feature of the nonlocal perimeter considered, that penalizes non-connected components. 

A crucial intermediate  result consists in generalizing the Alexandrov-type estimate \cite[Theorem~1.3]{MPS} and \cite[Theorem 1.3]{DeKu} (see also \cite{KM}) to the fractional setting. This result provides a stability inequality for $C^1-$normal deformations of balls.
We briefly give some definitions to present some further details.  Set $B=B_1(0)$ and let $f:\bd B\to \R$ be a function with $\|f\|_{L^{\infty}(\bd B)}$ is sufficiently small. The\textit{ normal deformation} $B_f$ of the set $B$ is defined as
\[ \bd B_f:=\{ x(1+f(x)) \ :\ x\in\bd B  \}. \]
A normal deformation $B_f$ is said to be of class $C^k$ if $f\in C^k(\bd E)$. The quantitative Alexandrov type estimate proved in \cite{MPS} is the following.
\begin{theorem}[Theorem 1.1 in \cite{MPS}]\label{Alex old}
There exist $\delta \in (0,1/2)$ and $C>0$ with the following property: for any $f \in C^1(\bd B) \cap H^2(\bd B)$ such that $\|f\|_{C^1(\bd B)} \le \delta$, $|B_f|=\omega_N$ and $\textnormal{bar}(B_f)= \int_{B_f} x \ud x=0$, we have
\[\|f\|_{H^1(\bd B)} \le C\|H_{B_f}-\bar H_{B_f}\|_{L^2(\bd B)} .\]
\end{theorem}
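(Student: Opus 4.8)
The plan is to run a Fuglede‑type second‑variation argument on the sphere, arranged so that one never needs control on $\|f\|_{H^2(\bd B)}$. Write $u:=1+f$, parametrise $\bd B_f$ by $\Phi_f(x)=u(x)\,x$ over the unit sphere $\bd B=\Sp^{N-1}$, and recall that the area element of $\bd B_f$ pulls back to $J(u,\nabla u)\,\udH$ on $\bd B$, where $J(u,\nabla u)=u^{N-2}\sqrt{u^2+|\nabla u|^2}$ and $\nabla$ is the tangential gradient. Differentiating the perimeter along the family $s\mapsto B_{sg}$ yields, for every $g\in C^1(\bd B)$, the identity
\[
\int_{\bd B}\bigl(H_{B_f}\circ\Phi_f\bigr)\,u^{N-1}\,g\,\udH=\int_{\bd B}\Bigl(\partial_u J(u,\nabla u)\,g+\partial_{\nabla u}J(u,\nabla u)\cdot\nabla g\Bigr)\,\udH ,
\]
whose right‑hand side involves only $f,\nabla f,g,\nabla g$ — no second derivatives — hence is estimable by Sobolev embedding and the smallness of $\|f\|_{C^1(\bd B)}$ alone. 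Taylor expanding $\partial_u J$ and $\partial_{\nabla u}J$ at $(u,\nabla u)=(1,0)$ identifies the relevant linearised operator as the Jacobi operator of the ball, $\mathcal L f:=-\Delta_{\bd B}f-(N-1)f$.

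Next I would record the spectral picture of $\mathcal L$ on $\bd B$: on the degree‑$k$ spherical harmonics it acts by multiplication by $k(k+N-2)-(N-1)$, equal to $-(N-1)$ for $k=0$ (dilations), to $0$ for $k=1$ (translations), and $\ge N+1>0$ for $k\ge 2$. Hence, decomposing $f=f_0+f_1+f_{\ge 2}$ accordingly,
\[
\langle\mathcal L f,f\rangle_{L^2(\bd B)}=\|\nabla f\|_{L^2(\bd B)}^2-(N-1)\|f\|_{L^2(\bd B)}^2\ \ge\ c\,\|f\|_{H^1(\bd B)}^2-C\bigl(\|f_0\|_{L^2(\bd B)}^2+\|f_1\|_{L^2(\bd B)}^2\bigr).
\]
The two constraints neutralise exactly the bad modes: expanding $|B_f|=\tfrac1N\int_{\bd B}u^N\,\udH=\omega_N$ to second order gives $\bigl|\int_{\bd B}f\,\udH\bigr|\le C\|f\|_{L^2(\bd B)}^2$, hence $\|f_0\|_{L^2}\le C\|f\|_{L^2}^2$; expanding $\textnormal{bar}(B_f)=\tfrac1{N+1}\int_{\bd B}u^{N+1}x\,\udH=0$ gives $\bigl|\int_{\bd B}f\,x_i\,\udH\bigr|\le C\|f\|_{L^2}^2$, hence $\|f_1\|_{L^2}\le C\|f\|_{L^2}^2$; since $\|f\|_{L^2(\bd B)}\le C\delta$, both are $\le C\delta^2\|f\|_{H^1(\bd B)}$, so $\langle\mathcal L f,f\rangle\ge\tfrac c2\|f\|_{H^1(\bd B)}^2$ once $\delta$ is small.

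The argument then consists in testing the identity of the first paragraph against $g\equiv 1$ and against $g=f$. With $g\equiv 1$ the right‑hand side equals $\int_{\bd B}\partial_u J(u,\nabla u)\,\udH=(N-1)|\bd B|+O(\|f\|_{H^1}^2)$; correcting for the $O(|\nabla f|^2)$ discrepancy between $u^{N-1}$ and the true Jacobian $J$, and discarding the trivial case $\|H_{B_f}-\bar H_{B_f}\|_{L^2(\bd B)}\ge\delta$ (in which the conclusion is immediate from $\|f\|_{H^1}\le C\delta$), one obtains $\bar H_{B_f}=(N-1)+\varepsilon$ with $|\varepsilon|\le C\delta$. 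With $g=f$ the same expansion gives
\[
\int_{\bd B}\bigl(H_{B_f}\circ\Phi_f\bigr)u^{N-1}f\,\udH=\|\nabla f\|_{L^2}^2+(N-1)(N-2)\|f\|_{L^2}^2+(N-1)\!\int_{\bd B}\! f\,\udH+O\bigl(\delta\|f\|_{H^1}^2\bigr).
\]
Subtracting $\bar H_{B_f}\int_{\bd B}u^{N-1}f\,\udH$ and expanding $u^{N-1}f=f+(N-1)f^2+O(f^3)$: the $\int f$ terms cancel, the smallness of $\varepsilon$ and of $\int_{\bd B}f$ (the latter from the volume constraint) absorb all $\varepsilon$‑terms, and the coefficient of $\|f\|_{L^2}^2$ collapses from $(N-1)(N-2)$ to $(N-1)(N-2)-(N-1)^2=-(N-1)$, leaving
\[
\int_{\bd B}\bigl(H_{B_f}\circ\Phi_f-\bar H_{B_f}\bigr)u^{N-1}f\,\udH=\langle\mathcal L f,f\rangle_{L^2(\bd B)}+O\bigl(\delta\|f\|_{H^1(\bd B)}^2\bigr)\ \ge\ \tfrac c4\|f\|_{H^1(\bd B)}^2 .
\]
On the other hand this integral is $\le\|H_{B_f}-\bar H_{B_f}\|_{L^2(\bd B)}\,\|u^{N-1}f\|_{L^2(\bd B)}\le C\,\|H_{B_f}-\bar H_{B_f}\|_{L^2(\bd B)}\,\|f\|_{H^1(\bd B)}$, and dividing by $\|f\|_{H^1}$ gives the claim.

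The main obstacle is the weak regularity hypothesis: $f$ is only $C^1\cap H^2$, with no bound on $\|f\|_{H^2}$, while $H_{B_f}$ involves $\nabla^2 f$. The device that circumvents this is precisely the first‑variation identity, which transfers every second derivative onto the smooth test functions $1$ and $f$, so one never has to estimate a remainder of the forbidden form $\|f\|_{C^1}\|f\|_{H^2}$. The remaining delicate points are bookkeeping in nature — the cancellation turning the $f$‑chart coefficient $(N-1)(N-2)$ into the geometric $-(N-1)$, the a priori control of $\bar H_{B_f}$ via the $g\equiv1$ test together with the trivial‑case dichotomy, and the passage between integrals over $\bd B$ and over $\bd B_f$ — all governed by the bound $\|f\|_{C^1(\bd B)}\le\delta$, and I expect them to be routine once the structure above is in place.
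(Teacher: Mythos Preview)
Your proof is correct. The paper, however, does not prove Theorem~\ref{Alex old} directly: it is stated as a cited result from \cite{MPS}, and the paper then recovers it as a corollary of the fractional Theorem~\ref{Alex} by sending $s\to 1$ in the uniform-in-$s$ estimate of item~\textit{ii)} (via Theorems~\ref{convergence Hs} and~\ref{convergence seminorms}). What you have written is essentially the classical ($s=1$) analogue of the paper's own proof of Theorem~\ref{Alex}: the same two tests ($\psi\equiv 1$ and $\psi=f$) against the first-variation identity, the same use of the volume and barycenter constraints to neutralise the degree-$0$ and degree-$1$ spherical harmonics, and the same appeal to coercivity of the second variation (your spectral lower bound on $\mathcal L=-\Delta_{\bd B}-(N-1)$ is the $s=1$ counterpart of Lemma~\ref{lemma 1.4}). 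Your route is more self-contained for this particular statement and, because you work through the weak first-variation formula, cleanly covers the full $C^1\cap H^2$ regularity class without any bound on $\|f\|_{H^2}$; the paper's limit argument (item~\textit{iii)} of Theorem~\ref{Alex}) is stated only for $f\in C^2$. Conversely, the paper's route obtains the classical result essentially for free once the fractional one is in hand.
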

We are able to extend the previous result to the fractional setting. Namely, we obtain the following.
\begin{theorem}
\label{Alex}
    There exist $\delta >0$ with the following property: for any $f \in C^1(\bd B)$ such that $\|f\|_{C^1(\bd B)} \le \delta$, $|B_f|=\omega_N$ and $\textnormal{bar}(B_f)= \int_{B_f} x \ud x=0$,
    then
    \begin{itemize}
        \item[i)] for any $s\in (0,1)$, there exists $C=C(N,s)>0$ such that 
        \begin{equation*}
        \|f\|_{H^{\frac{1+s}{2}}(\bd B)^2} \le C\|H^s_{B_f}-\bar H^s_{B_f}\|_{L^2(\bd B)^2} ;
    \end{equation*}
    \item[ii)] there exist $s^*\in(0,1)$ and $C=C(N)>0$ such that, for any $s\in (s^*,1)$, it holds
        \begin{equation}\label{Alex eq}
        (1-s)\|f\|^2_{H^{\frac{1+s}{2}}(\bd B)} \le C \big \|(1-s)\left(H^s_{B_f}-\bar H^s_{B_f}\right) \big \|_{L^2(\bd B)}^2 ;
    \end{equation}
    \item[iii)] if $f\in C^2(\bd B)$, as $s\to 1,$ estimate \eqref{Alex eq} tends to 
    \begin{equation*}
        \|f\|_{H^1(\bd B)^2} \le C \| H_{B_f}-\bar H_{B_f}  \|_{L^2(\bd B)^2}.
    \end{equation*}
    \end{itemize}
    In the previous results, we have set  $\bar H^s_{B_f}:= \fint_{\bd B} H^s_{B_f}(x+f(x)x) \udH(x).$
\end{theorem}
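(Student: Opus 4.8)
I would run a Fuglede-type argument, the nonlocal analogue of the proof of Theorem~\ref{Alex old}: linearize the fractional mean curvature around the sphere, prove a spectral coercivity estimate for the linearized (Jacobi) operator on the orthogonal complement of its low-frequency kernel, and then use the two geometric constraints to show that $f$ is almost orthogonal to that kernel. Concretely, writing $x_f:=x\big(1+f(x)\big)$ for $x\in\bd B$, the first step is the expansion $H^s_{B_f}(x_f)=H^s_B+\Ls f(x)+\mathcal R_s(f)(x)$ on $\bd B$, where $H^s_B$ is the (constant) fractional curvature of the unit ball, $\Ls$ is the linear, $L^2(\bd B)$-selfadjoint nonlocal operator of order $1+s$ given by the second variation of $P^s$ at $B$, and $\mathcal R_s$ is a quadratic remainder. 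The quantitative input is a bound of the form $\|\mathcal R_s(f)\|_{L^2(\bd B)}\le C\,\|f\|_{C^1(\bd B)}\,\|f\|_{H^{\frac{1+s}{2}}(\bd B)}$; for items (ii) and (iii) the same expansion and estimate are needed for the $(1-s)$-renormalized operator $(1-s)\Ls$ and curvature $(1-s)H^s$, with $C$ independent of $s$ for $s$ close to $1$. If a norm of $f$ higher than $H^{\frac{1+s}{2}}$ turns out to be unavoidable in the remainder, one couples this with a fractional Schauder/elliptic estimate so as not to require more regularity of $f$ than the hypotheses provide.

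\textbf{Spectrum of the Jacobi operator.} By rotational invariance $\Ls$ is diagonal with respect to the decomposition of $L^2(\bd B)$ into spherical harmonics $\mathcal Y_k$, with eigenvalue $\mu^s_k$ on $\mathcal Y_k$; I would establish three properties. First, $\mu^s_0<0$: taking $f\equiv\varepsilon$ corresponds to the dilation $B\mapsto B_{1+\varepsilon}$ and $H^s_{B_{1+\varepsilon}}=(1+\varepsilon)^{-s}H^s_B$, whence $\mu^s_0=-s\,H^s_B$. Second, $\mu^s_1=0$: taking $f(x)=v\cdot x$ corresponds to first order to the translation $B\mapsto B+v$, which leaves $H^s$ constant. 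Third, for $k\ge 2$ one has $\mu^s_k>0$ with $\mu^s_k\ge c_{N,s}\,k^{1+s}$, and moreover $(1-s)\mu^s_k\ge c_N\,k^{1+s}$ uniformly in $s\in(s^*,1)$ with $(1-s)\mu^s_k\to c_N(k-1)(k+N-1)$ as $s\to 1$ — this is exactly the coercivity of the second variation of the fractional perimeter at the ball, obtainable from the explicit Gamma-function expression for $\mu^s_k$ or quoted from the stability analysis of \cite{CMP2015} (see also \cite{CSV}). Setting $g:=H^s_{B_f}-\bar H^s_{B_f}$ and using that subtracting the average removes the $\mathcal Y_0$ component, $g=\big(\Ls f-\overline{\Ls f}\big)+\big(\mathcal R_s(f)-\overline{\mathcal R_s(f)}\big)$ and $\big\|\Ls f-\overline{\Ls f}\big\|_{L^2}^2=\sum_{k\ge 2}(\mu^s_k)^2\,\|\Pi_k f\|_{L^2}^2$, the $k=1$ term disappearing because $\mu^s_1=0$.

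\textbf{Constraints and conclusion.} Expanding $|B_f|=\tfrac1N\int_{\bd B}(1+f)^N$ and $\textnormal{bar}(B_f)=\tfrac1{N+1}\int_{\bd B}(1+f)^{N+1}x$ to second order and imposing $|B_f|=\omega_N$, $\textnormal{bar}(B_f)=0$ forces the projection $\Pi_{\le1}f$ of $f$ onto $\mathcal Y_0\oplus\mathcal Y_1$ to be quadratically small, $\|\Pi_{\le1}f\|_{L^2}\le C\|f\|_{L^2}^2\le C\delta\|f\|_{L^2}$, hence $\|\Pi_{\le1}f\|_{H^{(1+s)/2}}\le C\delta\|f\|_{H^{(1+s)/2}}$. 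From $(\mu^s_k)^2\ge c_{N,s}^2\,k^{1+s}$ and $(\mu^s_k)^2\ge(\mu^s_2)^2>0$ for $k\ge 2$ one gets $\big\|\Ls f-\overline{\Ls f}\big\|_{L^2}\ge c\,\|f-\Pi_{\le1}f\|_{H^{(1+s)/2}}$, so $\|g\|_{L^2}\ge c\,\|f-\Pi_{\le1}f\|_{H^{(1+s)/2}}-C\|f\|_{C^1}\|f\|_{H^{(1+s)/2}}$; reabsorbing the remainder and the small low modes when $\delta$ is small yields (i). (For (i) with $s$ fixed a softer compactness--contradiction argument also works: normalizing $\|f\|_{H^{(1+s)/2}}=1$, vanishing of the curvature defect forces, through the expansion and the coercivity of $\Ls$ on $\bigoplus_{k\ge2}\mathcal Y_k$, convergence $f\to 0$ in $H^{(1+s)/2}$, a contradiction.) For (ii) I would repeat the computation with $(1-s)\Ls$, $(1-s)H^s$ and the uniform bound $(1-s)\mu^s_k\gtrsim k^{1+s}$ ($k\ge2$, $s\in(s^*,1)$); since all constants are then $s$-independent, one passes to the limit $s\to 1$ in \eqref{Alex eq} to obtain (iii), using $(1-s)H^s_{B_f}\to c_N H_{B_f}$ with $L^2$-convergence when $f\in C^2$, together with the Bourgain--Brezis--Mironescu asymptotics $(1-s)[f]^2_{H^{(1+s)/2}(\bd B)}\to c_N\|\nabla_{\bd B} f\|_{L^2(\bd B)}^2$ for the left-hand side.

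\textbf{Main obstacle.} The hardest part is the first step: establishing the linearization together with the quadratic remainder estimate, and tracking the exact dependence of every constant on $1-s$ so that (ii)--(iii) survive the limit $s\uparrow1$. Because $H^s$ is a principal-value singular integral of order $1+s$ over the perturbed graph $\bd B_f$, this forces one to expand both the kernel $|x_f-y_f|^{-(N+s)}$ and the integral itself in $f$ while preserving the odd-order cancellations that keep the error genuinely quadratic and of the correct order, uniformly as $s\to1$; by contrast, the spectral computation and the constraint expansion, though technical, are essentially explicit.
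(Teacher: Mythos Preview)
Your approach is correct and is a valid, classical Fuglede-type argument, but the paper takes a genuinely different and softer route that sidesteps precisely what you identify as the main obstacle. Instead of expanding $H^s_{B_f}(x_f)$ pointwise and estimating a remainder $\mathcal R_s(f)$ in $L^2$, the paper computes the \emph{first variation} $\delta P^s(B_f)[\psi]$ in two ways: once via the general identity $\delta P^s(B_f)[\psi]=\int_{\bd B}H^s_{B_f}(p)\psi(1+f)^{N-1}$, and once by differentiating the explicit integral representation of $P^s(B_f)$ in polar coordinates. Equating these and choosing $\psi=1$ and $\psi=f$ yields, respectively, a bound on $|\bar H^s_{B_f}-H^s_B|$ and the inequality
\[
[f]_{\frac{1+s}{2}}^2-\lambda_1^s\|f\|_{L^2}^2\le C\|H^s_{B_f}-\bar H^s_{B_f}\|_{L^2}\|f\|_{L^2}+C\delta\big(\|f\|_{L^2}^2+[f]_{\frac{1+s}{2}}^2\big),
\]
after which the coercivity of the second variation (quoted as a black box from \cite{FFMMM}, already incorporating the volume and barycenter constraints) closes the argument. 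Items (ii)--(iii) are then obtained just by multiplying through by $(1-s)$ and invoking the known asymptotics of $(1-s)\lambda_1^s$, $(1-s)H^s$, and $(1-s)[\,\cdot\,]_{\frac{1+s}{2}}^2$.

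What each approach buys: your spectral-decomposition route is more explicit, gives finer control (e.g.\ it would yield an $H^{1+s}$-type estimate, not just $H^{\frac{1+s}{2}}$), and makes the role of the kernel $\mathcal Y_0\oplus\mathcal Y_1$ transparent; but it requires the pointwise curvature expansion with uniform-in-$s$ remainder bounds, which is delicate. The paper's variational-testing approach avoids this entirely: only \emph{integrated} quantities against two specific test functions are needed, and the hard analytic input (coercivity on the constrained subspace) is imported wholesale from \cite{FFMMM}. The $(1-s)$-tracking is also much lighter in the paper's version, since it enters only at the very last step.
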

In particular, we recover Theorem \ref{Alex old} as a corollary of our result. The proof of the previous theorem follows closely the proof of the quantitative Alexandrov type estimate obtained in the flat torus and contained in \cite{DeKu}. In particular, the approach is based on some Taylor approximations of the factor $\bar H^s_{B_f}-H^s_{B_f}(x)$ combined with the coercivity of the second variation of the fractional perimeter, proved in \cite{FFMMM}.

After this work was completed, we were informed that a similar quantitative Alexandrov type estimate, more precisely a result analogous to item \textit{i)} in Theorem \ref{Alex}, has been independently proved in \cite{CesNov22}. In this paper the authors use this result to prove both global existence of a flow of smooth sets satisfying \eqref{evolution law}, starting from a smooth normal deformation of a ball, and its asymptotic exponential convergence to the same ball.

\subsection*{Acknowledgements}

The authors warmly thank Professor  Massimiliano Morini for insightful discussions.
D.~De Gennaro and Anna Kubin wish to acknowledge the hospitality of the Faculty of Mathematics of the Technical University of Munich, where part of this research was carried out. Andrea Kubin and Anna Kubin are members of the Gruppo Nazionale per l’Analisi Matematica, la Probabilità e le loro Applicazioni (GNAMPA) of the Istituto Nazionale di Alta Matematica (INdAM).  
D.~De Gennaro has received funding from the European Union’s Horizon 2020 research and innovation programme under the Marie Skłodowska-Curie grant agreement No 94532. \includegraphics[scale=0.01]{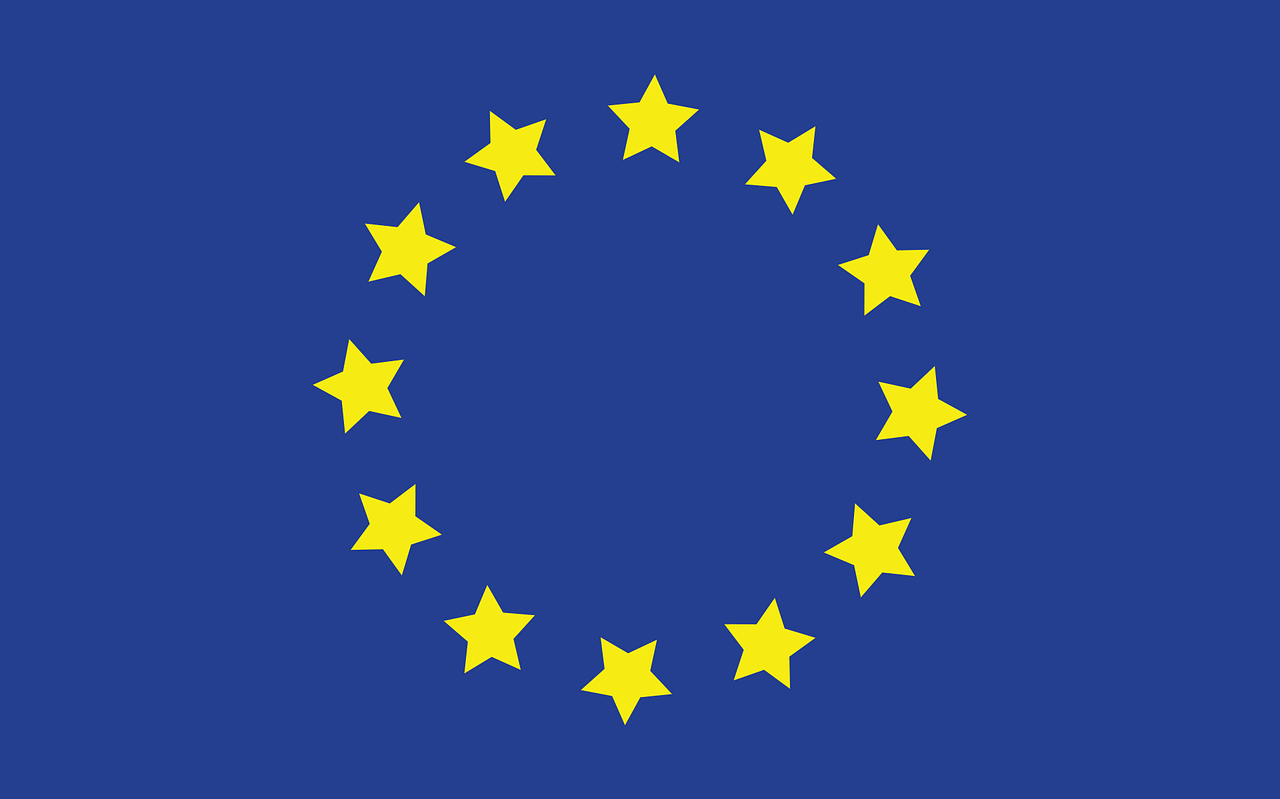}.
Andrea Kubin was supported by the DFG Collaborative Research Center TRR 109 “Discretization in Geometry and Dynamics”. 
	
%%%%%%%%%%%%%%%%%%%%%%%%%%%%%%%%%%%%%%%%%%%%%%%%%%%%%%%%%

\section*{Notation}
We work in the Euclidian space $\mathbb{R}^N$, with $N\geq 1$. We denote with $| \cdot |$ the standard Lebesgue measure in $\mathbb{R}^N$, $\mathrm{M}(\mathbb{R}^N)$ is the family of measurable set of $\mathbb{R}^N$ and $\mathrm{M}_f(\mathbb{R}^N) \subset \mathrm{M}(\mathbb{R}^N)$ is the family of subsets of $\mathbb{R}^N$ having finite measure. We denote with $E^c$ the complement of a set $E \subset \mathbb{R}^N$. We denote by $\mathcal{H}^{N-1}$ the Hausdorff measure, and sometimes denote $\udH_x:=\udH(x)$. If $E$ is a set with smooth boundary the outer normal to $E$ at a point $x$ in $\partial E$ is denoted by $\nu=\nu_{E}(x)$. We will denote $B=B(0,1)$. We will denote the ball of radius $r$ and center $x$ both as $B(x,r)$ and $B_r(x).$ Also, with $B^{(m)}$ we denote the ball centered in 0 and having volume $|B^{(m)}|=m.$ Finally, we denote by $C(*,\cdots,*)$ a constant that depends on $*, \cdots, *$; such a
constant may change from line to line.

\section{Preliminaries}

Let $s \in (0,1)$ we define the $s$-fractional perimeter as the following function 
$$ P^s: \mathrm{M}_f(\mathbb{R}^N) \rightarrow [0,+\infty], \quad P^s(E)\coloneqq \int_{E} \int_{E^c} \frac{1}{\vert x-y \vert^{N+s}}\ud x \ud y=\frac 12 [\chi_E]_{H^{\frac{s}{2}}}. $$
More in general, for every $E, F \in \mathrm{M}_f(\mathbb{R}^N)$ we set $$\Ls(E,F) \coloneqq \int_E \int_F \frac{1}{|x-y|^{N+s}} \ud x \ud y $$
and, for any bounded set $\Omega$, we define the fractional perimeter of $E$ relative to $\Omega$ as
$$ P^s(E;\Omega)\coloneqq \Ls( E \cap \Omega,E^c \cap \Omega) + \Ls( E \cap \Omega, E^c \setminus \Omega) + \Ls(E \setminus \Omega, E^c \cap \Omega ) .$$
Let $E \in \mathrm{M}_f(\mathbb{R}^N)$ be a set of class $C^2$. Given a vector field $X \in C^{1}_c(\R^N; \R^N)$, let
$$\Phi:\R \times \R^N \to \R^N, \quad \Phi(t,x)=x+t X(x).$$
We recall that the first variation of the $s$-fractional perimeter of $E$ in the direction of $X$ is given by 
$$\delta P^s(E)[X] \coloneqq \frac{\ud}{\ud t} \Big |_{t=0} P^s(\Phi(t,E))= \int_{\bd E} H^s_{ E}(x) X(x) \cdot \nu_E(x) \udH_x, $$
where $H^s_E(x)$ is the $s$-fractional mean curvature of $E$ evaluated at $x \in \bd E$, that is
$$H^s_E(x) \coloneqq \int_{\R^N} \frac{\chi_E(y)-\chi_{E^c}(y)}{|x-y|^{N+s}} \ud y, $$
where the integral has to be intended in the principal value sense. Applying the divergence theorem, the fractional curvature can be written as
\[ H^s_E(p)=\int_{\bd E} \dfrac{(x-p)\cdot \nu_E(x)}{|x-p|^{N+s}} \udH(x) \quad \forall \, p \in \partial E.\]

We recall some useful results concerning sets of finite fractional perimeter.
\begin{proposition}[Lower semi-continuity]
	Let $\{E_n\}_{n \in \mathbb{N}}\subset \mathrm{M}_{f}(\mathbb{R}^N)$ such that $ \chi_{E_n} \rightarrow \chi_{E}$ in $L^1_{loc}$, as $n \rightarrow + \infty$, for some $E \in \mathrm{M}_f(\mathbb{R}^N)$. Then, for all $s \in (0,1)$, we have 
	$$ P^s(E)\leq \liminf_{n \rightarrow + \infty} P^s(E_n).$$
\end{proposition}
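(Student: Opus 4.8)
The plan is to reduce the statement to Fatou's lemma, after rewriting the fractional perimeter as a single double integral of a nonnegative symmetric integrand. First I would record the elementary identity
\[ 2\,P^s(E)=\int_{\R^N}\int_{\R^N}\frac{|\chi_E(x)-\chi_E(y)|}{|x-y|^{N+s}}\ud x\ud y, \]
which holds because $|\chi_E(x)-\chi_E(y)|=\chi_E(x)\chi_{E^c}(y)+\chi_{E^c}(x)\chi_E(y)$, so that the right-hand side simply counts the contribution $\Ls(E,E^c)$ twice; the same identity is of course valid for each $E_n$. The advantage of this reformulation is that the integrand is nonnegative, so Fatou applies directly, and it depends on $x$ and $y$ only through $\chi_E$, so $L^1_{loc}$-convergence of $\chi_{E_n}$ will transfer to a.e.\ convergence of the integrands.

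Next I would handle the bookkeeping forced by the fact that $L^1_{loc}$-convergence gives pointwise convergence only along a subsequence. If $\liminf_{n}P^s(E_n)=+\infty$ there is nothing to prove, so assume it is finite and pick a subsequence $\{E_{n_k}\}_k$ with $\lim_k P^s(E_{n_k})=\liminf_{n\to\infty}P^s(E_n)$. Since $\chi_{E_{n_k}}\to\chi_E$ in $L^1_{loc}(\R^N)$, I can extract a further subsequence (not relabeled) such that $\chi_{E_{n_k}}(x)\to\chi_E(x)$ for a.e.\ $x\in\R^N$, say for all $x$ outside a Lebesgue-null set $\mathcal N$. Then $\chi_{E_{n_k}}(x)-\chi_{E_{n_k}}(y)\to\chi_E(x)-\chi_E(y)$ for all $(x,y)$ outside $(\mathcal N\times\R^N)\cup(\R^N\times\mathcal N)$, which is $\mathcal L^{2N}$-null by Fubini; consequently the integrands $|\chi_{E_{n_k}}(x)-\chi_{E_{n_k}}(y)|\,|x-y|^{-N-s}$ converge a.e.\ on $\R^N\times\R^N$ to $|\chi_E(x)-\chi_E(y)|\,|x-y|^{-N-s}$.

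Finally I would apply Fatou's lemma to the double integral: since these integrands are nonnegative and converge a.e.,
\[ 2\,P^s(E)=\int_{\R^N}\int_{\R^N}\liminf_{k}\frac{|\chi_{E_{n_k}}(x)-\chi_{E_{n_k}}(y)|}{|x-y|^{N+s}}\ud x\ud y\le\liminf_{k}2\,P^s(E_{n_k})=2\liminf_{n\to\infty}P^s(E_n), \]
and dividing by $2$ gives the claim. There is essentially no serious obstacle here: the argument is routine, and the only points requiring a little care are the two nested subsequence extractions (one to realize the $\liminf$, one to upgrade $L^1_{loc}$-convergence to a.e.\ convergence) and the observation that a null set of $\R^N$ pulls back to a null set of $\R^{2N}$ under the two coordinate projections. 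I note that, equivalently, one could phrase the same computation as the lower semicontinuity of the Gagliardo seminorm $[\,\cdot\,]_{H^{s/2}}$ along a.e.-converging sequences, in view of the identity $P^s(E)=\tfrac12[\chi_E]_{H^{s/2}}$ recalled above.
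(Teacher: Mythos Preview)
The paper states this proposition as a preliminary result without proof, so there is no argument to compare against. Your proof is correct and is the standard one: rewrite $P^s$ via the symmetric Gagliardo-type double integral, pass to a subsequence realizing the $\liminf$ and then to a further subsequence along which $\chi_{E_{n_k}}\to\chi_E$ a.e.\ (using a countable exhaustion of $\R^N$ by compacts together with a diagonal argument, which you implicitly invoke), and apply Fatou's lemma to the nonnegative integrand. The only cosmetic remark is that the identity $P^s(E)=\tfrac12[\chi_E]_{H^{s/2}}$ you cite at the end should carry the squared seminorm, i.e.\ $P^s(E)=\tfrac12[\chi_E]_{H^{s/2}}^2$, matching the paper's convention; this does not affect your argument.
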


\begin{theorem}[Compactness]
	If $R>0$ and $\{ E_n\}_{n \in \mathbb{N}} \subset \mathrm{M}(\mathbb{R}^N)$, with 
	$$ E_n \subset B(0,R) \quad \forall n \in \mathbb{N} \quad \textnormal{and} \quad \sup_{n \in \mathbb{N}} P^s(E_n) < + \infty,$$
then, up to a subsequence, ${E_n} \rightarrow E$ in  $L^1(\mathbb{R}^N)$, where $E \subset B(0,R)$ and $P^s(E)< +\infty$.
\end{theorem}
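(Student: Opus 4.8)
The plan is to derive this from the classical Fr\'echet--Kolmogorov compactness criterion in $L^1(\R^N)$, using the fractional perimeter bound to control $L^1$-translations uniformly in $n$. Since every $E_n$ lies in the fixed ball $B(0,R)$, the functions $\chi_{E_n}$ are equi-supported in $B(0,R)$ and $\|\chi_{E_n}\|_{L^1(\R^N)}=|E_n|\le \omega_N R^N$, so only the equicontinuity of translations remains to be established.

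The key estimate I would isolate is
\[
\int_{\R^N}\big|\chi_E(x+z)-\chi_E(x)\big|\ud x \;\le\; C(N,s)\,|z|^s\,P^s(E)\qquad\text{for all }z\in\R^N,\ E\in\mathrm{M}_f(\R^N).
\]
To prove it, set $\omega_E(z):=\|\chi_E(\cdot+z)-\chi_E\|_{L^1(\R^N)}$ and note that, because $|\chi_E(x)-\chi_E(y)|$ only takes the values $0$ and $1$, the change of variables $y=x+z$ gives $2P^s(E)=\int_{\R^N}\omega_E(z)\,|z|^{-N-s}\ud z$. Then, for fixed $z$, using the subadditivity $\omega_E(z)\le\omega_E(z-z')+\omega_E(z')$ and averaging the variable $z'$ over the ball $B_{|z|/2}(0)$, I would bound $\omega_E(z)$ by the mean of $\omega_E$ over $B_{|z|/2}(0)$ plus its mean over $B_{|z|/2}(z)$; on both balls $|w|$ is comparable to $|z|$, so estimating $1\le C\,|z|^{N+s}|w|^{-N-s}$ pointwise and inserting the identity above yields the displayed inequality with an explicit dimensional constant. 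Applying this to $E=E_n$ gives $\sup_n\|\chi_{E_n}(\cdot+z)-\chi_{E_n}\|_{L^1(\R^N)}\le C(N,s)\,|z|^s\sup_n P^s(E_n)\to 0$ as $z\to 0$.

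By the Fr\'echet--Kolmogorov theorem the family $\{\chi_{E_n}\}$ is then precompact in $L^1(\R^N)$; hence, up to a subsequence, $\chi_{E_n}\to g$ in $L^1(\R^N)$, and, up to a further subsequence, also a.e. Since the $\chi_{E_n}$ are $\{0,1\}$-valued, $g\in\{0,1\}$ a.e., so $g=\chi_E$ for a measurable set $E$; since each $\chi_{E_n}$ vanishes a.e.\ outside $B(0,R)$, so does $g$, i.e.\ $|E\setminus B(0,R)|=0$, so $E\subset B(0,R)$ up to a null set. Finally, the lower-semicontinuity proposition stated above yields $P^s(E)\le\liminf_n P^s(E_n)<+\infty$, which completes the argument.

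I expect the only non-routine point to be the translation estimate, and in particular arranging the averaging argument so that it produces a clean constant. Alternatively, one may bypass it entirely: fixing a bounded Lipschitz domain $\Omega$ with $\overline{B(0,R)}\subset\Omega$, one has $[\chi_{E_n}]_{W^{s,1}(\Omega)}\le 2P^s(E_n)$ and $\|\chi_{E_n}\|_{L^1(\Omega)}\le|\Omega|$, so $\{\chi_{E_n}\}$ is bounded in $W^{s,1}(\Omega)$, and the compact embedding $W^{s,1}(\Omega)\hookrightarrow\hookrightarrow L^1(\Omega)$ (see e.g.\ the Hitchhiker's guide of Di Nezza--Palatucci--Valdinoci) gives the required precompactness; the identification of the limit and the lower-semicontinuity step are then carried out verbatim.
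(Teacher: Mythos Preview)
The paper does not prove this compactness theorem; it is recalled in the Preliminaries (Section~1) as a standard fact, stated without proof or reference. There is therefore no proof in the paper to compare yours against.

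Your argument is correct. The translation estimate $\omega_E(z)\le C(N,s)\,|z|^s P^s(E)$ is the standard modulus-of-continuity bound for $W^{s,1}$ functions (here $[\chi_E]_{W^{s,1}(\R^N)}=2P^s(E)$), and your averaging derivation works. One wording imprecision: on the ball $B_{|z|/2}(0)$ the variable $w$ is \emph{not} comparable to $|z|$---it can be arbitrarily small---but what you actually use is only the one-sided bound $|w|\le |z|/2$, which still gives $1\le (|z|/2)^{N+s}|w|^{-N-s}$ and lets you insert the integral identity for $2P^s(E)$. The remaining steps (Fr\'echet--Kolmogorov, identification of the limit as a characteristic function supported in $B(0,R)$, and the lower-semicontinuity proposition) are routine and correctly handled. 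Your alternative via the compact embedding $W^{s,1}(\Omega)\hookrightarrow L^1(\Omega)$ is the usual textbook route and is equally valid.
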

 \begin{theorem}(Relative isoperimetric inequality)
    Let $\Omega \subset \mathbb{R}^N$ be an open bounded set with Lipschitz boundary and let $E\subset \R^N$ be a measurable set. Then there exists a constant $C=C(s,N, \Omega)>0$ such that
$$ P^{s}(E, \Omega) \geq \mathcal{L}_s(E \cap \Omega, E^c \cap \Omega) \geq C\min \left\{ \vert E \cap \Omega
 \vert^{ \frac{N-s}{N} }, \, \vert E \setminus \Omega \vert^{ \frac{N-s}{N} }\right\}. $$
 \end{theorem}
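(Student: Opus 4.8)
The goal is the chain of inequalities
\[
P^{s}(E,\Omega)\ \ge\ \mathcal L_s(E\cap\Omega, E^c\cap\Omega)\ \ge\ C\min\bigl\{|E\cap\Omega|^{\frac{N-s}{N}},\,|E\setminus\Omega|^{\frac{N-s}{N}}\bigr\}.
\]
The first inequality is immediate: by the very definition of $P^{s}(E;\Omega)$ as a sum of three nonnegative $\mathcal L_s$-terms, it dominates its first summand $\mathcal L_s(E\cap\Omega,E^c\cap\Omega)$. So the whole content is the second inequality, a Poincaré/isoperimetric-type estimate for the Gagliardo seminorm localized to $\Omega$.

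First I would reduce to a statement about the truncated function $u:=\chi_{E\cap\Omega}\in L^1(\mathbb R^N)$, noting that $u$ is supported in the bounded Lipschitz set $\Omega$ and that
\[
\mathcal L_s(E\cap\Omega, E^c\cap\Omega)\ \ge\ \mathcal L_s(E\cap\Omega, \Omega\setminus(E\cap\Omega))\ =\ \tfrac12[u]^2_{W^{s/2,2}(\Omega)}\ \gtrsim\ [u]^2_{W^{s/2,2}(\Omega)},
\]
up to identifying $\mathcal L_s$ with half the Gagliardo seminorm of the indicator (as recorded in the Preliminaries, $P^s(E)=\tfrac12[\chi_E]_{H^{s/2}}$, with the analogous identity localized to $\Omega$). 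The key step is then the fractional Sobolev embedding on the bounded Lipschitz domain $\Omega$: there is a constant $C=C(s,N,\Omega)$ with $\|u\|_{L^{p^*}(\Omega)}\le C\,\bigl(\|u\|_{L^1(\Omega)}+[u]_{W^{s/2,2}(\Omega)}\bigr)$ for an appropriate subcritical exponent, but more to the point I would use the \emph{fractional Sobolev–Poincaré inequality}
\[
\Bigl(\min\{|\{u=1\}\cap\Omega|,\,|\{u=0\}\cap\Omega|\}\Bigr)^{\frac{N-s}{N}}\ \le\ C(s,N,\Omega)\,[u]^2_{W^{s/2,2}(\Omega)},
\]
which is exactly the relative fractional isoperimetric inequality on $\Omega$ for the $s$-perimeter; see e.g.\ the fractional Poincaré inequalities of the Di Nezza–Palatucci–Valdinoci type together with the extension property of Lipschitz domains. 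Here $\{u=1\}\cap\Omega = E\cap\Omega$ and $\{u=0\}\cap\Omega = \Omega\setminus E = E^c\cap\Omega$, whose measure is at least $|E\setminus\Omega|$ only after intersecting with $\Omega$ — so I should be careful: the stated inequality has $|E\setminus\Omega|$ on the right, which can only be controlled if one instead argues with the exterior piece. I would therefore handle the two alternatives separately.

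If $|E\cap\Omega|\le |\Omega|/2$, apply the relative fractional isoperimetric inequality in the clean form above to get $\mathcal L_s(E\cap\Omega,E^c\cap\Omega)\gtrsim |E\cap\Omega|^{\frac{N-s}{N}}$, which is one of the two terms in the minimum and hence $\ge$ the minimum. If instead $|E\cap\Omega|>|\Omega|/2$, then either $|E\setminus\Omega|$ is bounded by a constant times $|E\cap\Omega|^{\frac{N-s}{N}}$-controlled quantities in a trivial way, or one exploits that $\mathcal L_s(E\cap\Omega, E^c\setminus\Omega)$ — which is \emph{not} part of the term we are lower-bounding — is irrelevant; the honest route is: since $|E\cap\Omega|>|\Omega|/2 \ge |E^c\cap\Omega|$ cannot be concluded directly, I would instead use the complementary relative isoperimetric inequality applied to $E^c$, i.e.\ $\mathcal L_s(E^c\cap\Omega, E\cap\Omega)\gtrsim |E^c\cap\Omega|^{\frac{N-s}{N}}$, and then bound $|E\setminus\Omega|$ against $|E^c\cap\Omega|^{\frac{N-s}{N}}$ using that $\mathcal L_s(E\setminus\Omega, E^c\cap\Omega)$ appears, wait — that term is also not in our summand.

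The cleanest correct argument, which I would ultimately present, is: $\mathcal L_s(E\cap\Omega, E^c\cap\Omega)$ depends only on the trace of $E$ on $\Omega$, so define $F:=E\cap\Omega$. Then $|F\setminus\Omega|=0$, so the claimed inequality for $F$ reads $\mathcal L_s(F\cap\Omega,F^c\cap\Omega)\ge C\min\{|F\cap\Omega|^{(N-s)/N},|F\setminus\Omega|^{(N-s)/N}\}=0$, which is vacuous — meaning the right-hand side genuinely uses $E$ outside $\Omega$ and the inequality must be \emph{false} as literally stated unless $|E\setminus\Omega|$ is meant to be $|E^c\cap\Omega|$, i.e.\ a typo for the complementary interior piece. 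I would proceed under that reading (which is the standard relative isoperimetric inequality): prove
\[
\mathcal L_s(E\cap\Omega, E^c\cap\Omega)\ \ge\ C(s,N,\Omega)\,\min\bigl\{|E\cap\Omega|^{\frac{N-s}{N}},\,|E^c\cap\Omega|^{\frac{N-s}{N}}\bigr\},
\]
by: (1) WLOG $|E\cap\Omega|\le|E^c\cap\Omega|$, hence $|E\cap\Omega|\le|\Omega|/2$; (2) the fractional Sobolev embedding $W^{s/2,2}(\Omega)\hookrightarrow L^{2^*_{s/2}}(\Omega)$ with $2^*_{s/2}=\frac{2N}{N-s}$ valid on the bounded Lipschitz domain $\Omega$ via its extension property; (3) apply it to $u=\chi_{E\cap\Omega}$, noting $\|u\|_{L^{2^*_{s/2}}(\Omega)}^2=|E\cap\Omega|^{(N-s)/N}$ and $\|u\|_{L^2(\Omega)}^2=|E\cap\Omega|\le|\Omega|^{s/N}|E\cap\Omega|^{(N-s)/N}$, so the $L^2$-term on the right of the embedding is absorbed into the seminorm term up to the constant when $|E\cap\Omega|\le|\Omega|/2$ (Poincaré); (4) identify $[u]^2_{W^{s/2,2}(\Omega)}$ with $2\,\mathcal L_s(E\cap\Omega,(E\cap\Omega)^c\cap\Omega)=2\,\mathcal L_s(E\cap\Omega,E^c\cap\Omega)$. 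The main obstacle — and the only nonroutine point — is step (2)–(3): making the fractional Poincaré/Sobolev inequality genuinely \emph{relative} (with the $\min$ and the sharp exponent $(N-s)/N$) on a Lipschitz domain, which requires either invoking the known fractional relative isoperimetric inequality (e.g.\ from the literature on $s$-perimeters in domains) or reproving it via a compactness/contradiction argument à la Bourgain–Brezis–Mironescu combined with the connectedness of $\Omega$; everything else is bookkeeping.
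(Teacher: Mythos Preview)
The paper does not prove this statement: it is listed among the ``useful results concerning sets of finite fractional perimeter'' recalled without proof in the Preliminaries, alongside lower semicontinuity and compactness. So there is no paper proof to compare against; the authors treat it as a known fact from the literature on fractional perimeters.

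Your observation about the stated right-hand side is correct and worth flagging: with $|E\setminus\Omega|$ as written the inequality is false (take $E=\Omega\cup A$ with $A\cap\Omega=\emptyset$ and $|A|>0$: then $\mathcal L_s(E\cap\Omega,E^c\cap\Omega)=0$ while the right-hand side is positive). The intended quantity is $|\Omega\setminus E|=|E^c\cap\Omega|$, which is also how the result is actually used later in the paper (see the appendix, where it is applied with $\min\{|E\cap B_l|,|B_l\setminus E|\}$).

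With that correction, your outline is the standard one and is sound: the first inequality is immediate from the definition of $P^s(E;\Omega)$; for the second, reduce by symmetry to $|E\cap\Omega|\le|\Omega|/2$, apply the fractional Sobolev--Poincar\'e inequality on the bounded Lipschitz domain $\Omega$ (obtained via the extension property and the global embedding $H^{s/2}(\mathbb R^N)\hookrightarrow L^{2N/(N-s)}(\mathbb R^N)$) to $u=\chi_{E\cap\Omega}$, and identify $[u]^2_{H^{s/2}(\Omega)}=2\,\mathcal L_s(E\cap\Omega,E^c\cap\Omega)$. The exploratory back-and-forth in your write-up could be trimmed, but the final route you settle on is exactly the right one.
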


We recall the following pointwise convergence theorems. The first one concerns the convergence of the fractional perimeter to the classical one: its proof can be found in \cite[Theorem 1]{CV2013}.

\begin{theorem}\label{convergence Ps}
    Let $E$ be a bounded, $C^{1,\alpha}$ set for $\alpha\in(0,1)$. Then,
    \[ \lim_{s\to 1}(1-s)P^s(E)=\omega_{N-1}P(E). \]
\end{theorem}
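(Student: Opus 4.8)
The plan is to exploit the fact that, as $s\to1^-$, the weight $(1-s)$ exactly compensates the increasing singularity of the kernel $|x-y|^{-N-s}$, so that $(1-s)P^s(E)$ concentrates on an arbitrarily thin tubular neighbourhood of $\bd E$; there a $C^{1,\alpha}$ boundary is almost flat, and comparison with the explicit half-space computation produces the constant $\omega_{N-1}$. One could alternatively deduce the statement from the Bourgain--Br\'ezis--Mironescu and D\'avila asymptotics for the Gagliardo seminorm applied to $u=\chi_E\in BV(\R^N)$, using that $2P^s(E)=\int_{\R^N}\int_{\R^N}|\chi_E(x)-\chi_E(y)|\,|x-y|^{-N-s}\ud x\,\ud y$; but I will sketch a self-contained argument adapted to $C^{1,\alpha}$ sets.

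The first step is to reduce to a boundary layer. Writing $d_E(x):=\dist(x,\bd E)$ and using $B(x,d_E(x))\subset E$ for $x\in E$, one has the crude bound $\int_{E^c}|x-y|^{-N-s}\ud y\le \tfrac{N\omega_N}{s}\,d_E(x)^{-s}$; hence, fixing a small $\rho>0$, the contribution to $P^s(E)$ of $\{x\in E:\ d_E(x)\ge\rho\}$ is at most $\tfrac{N\omega_N}{s}\rho^{-s}|E|$, which is uniformly bounded for $s\in[\tfrac12,1]$ and thus disappears after multiplication by $(1-s)$; the same tail bound shows that in what remains only the pairs with $|x-y|<\rho$ survive in the limit (note $y\in E^c\cap B(x,\rho)$ already forces $d_E(x)<\rho$). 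So it suffices to identify the limit of
\[
(1-s)\int_{E}\,\int_{E^c\cap B(x,\rho)}\frac{\ud y\,\ud x}{|x-y|^{N+s}}.
\]
I would then apply the coarea formula to the outer integral: since $E$ is bounded and $C^{1,\alpha}$, for $\rho$ small $d_E$ is Lipschitz with $|\nabla d_E|=1$ a.e.\ on $\{0<d_E<\rho\}$ and $\mathcal H^{N-1}(\{d_E=a\})\to P(E)$ as $a\to0^+$ (Minkowski content of a $C^1$ set), so the double integral equals $\int_0^\rho\big(\int_{\{d_E=a\}}F_s\,\ud\mathcal H^{N-1}\big)\ud a$ with $F_s(x):=\int_{E^c\cap B(x,\rho)}|x-y|^{-N-s}\ud y$.

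The core is the evaluation of $F_s$ on the boundary layer. Cover $\bd E$ by finitely many balls on each of which, after a rotation, $\bd E$ is the graph of a $C^{1,\alpha}$ function with arbitrarily small gradient, and work chart by chart through a subordinate partition of unity. For $x\in E$ with $d_E(x)=a$ and nearest point $p=\pi(x)$, let $H_x$ be the tangent half-space to $\bd E$ at $p$ (on the side of $E$); a direct computation gives
\[
\int_{H_x^c}\frac{\ud y}{|x-y|^{N+s}}=\frac{C_N(s)}{s}\,a^{-s},\qquad C_N(s):=\int_{\R^{N-1}}\frac{\ud w}{(1+|w|^2)^{(N+s)/2}},
\]
and by dominated convergence together with an elementary integral in polar coordinates, $\lim_{s\to1^-}C_N(s)=\int_{\R^{N-1}}(1+|w|^2)^{-(N+1)/2}\ud w=\omega_{N-1}$. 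The error from replacing $\bd E$ by $H_x$ is controlled by the $C^{1,\alpha}$ modulus: the graph deviates from its tangent plane at scale $t$ by $O(t^{1+\alpha})$, so $(E^c\,\triangle\,H_x)\cap B(x,\rho)$ has, at distance $\sim t\in(a,\rho)$ from $p$, volume $O(t^{N+\alpha})$, whence
\[
\Big|F_s(x)-\frac{C_N(s)}{s}\,a^{-s}\Big|\le C\rho^{-s}+C\int_a^\rho t^{\alpha-1-s}\ud t\le C'\big(\rho^{-s}+a^{\alpha-s}\big),
\]
with $C,C'$ independent of $s\in[\tfrac12,1]$ and of $x$ in the layer; the finitely many off-diagonal chart interactions are bounded in the same way.

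Finally, collecting these facts, $(1-s)P^s(E)=(1-s)\tfrac{C_N(s)}{s}\int_0^\rho a^{-s}\,\mathcal H^{N-1}(\{d_E=a\})\ud a+o(1)$ as $s\to1^-$, because $\int_0^\rho(\rho^{-s}+a^{\alpha-s})\ud a$ stays bounded while the prefactor $(1-s)\to0$. Since $(1-s)\int_0^\rho a^{-s}\ud a=\rho^{1-s}\to1$ and $\mathcal H^{N-1}(\{d_E=a\})\to P(E)$, a routine Abel-type estimate (splitting the $a$-integral near $0$) gives $(1-s)\int_0^\rho a^{-s}\mathcal H^{N-1}(\{d_E=a\})\ud a\to P(E)$, and together with $C_N(s)/s\to\omega_{N-1}$ this yields $\lim_{s\to1^-}(1-s)P^s(E)=\omega_{N-1}P(E)$. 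I expect the main obstacle to be the perturbation estimate of the third step: one must show that replacing $\bd E$ by its tangent plane perturbs the singular integral $F_s$ only to an order lower than the principal term $a^{-s}$, \emph{uniformly in $s$ near $1$}; it is precisely the H\"older gain $t^{\alpha}$ afforded by the $C^{1,\alpha}$ hypothesis that provides the needed margin, and one has to check it survives both the coarea integration in $a$ and the $(1-s)$ weight.
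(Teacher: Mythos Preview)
The paper does not supply its own proof of this statement: in the Preliminaries it simply records the result and refers to \cite[Theorem~1]{CV2013}. There is therefore no in-paper argument to compare against.

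Your self-contained sketch is sound. The layer reduction, the coarea decomposition over the level sets of $d_E$ inside $E$, the half-space computation $\int_{H_x^c}|x-y|^{-N-s}\ud y=C_N(s)a^{-s}/s$ with $C_N(s)\to\omega_{N-1}$, and the $C^{1,\alpha}$ perturbation bound $O(\rho^{-s}+a^{\alpha-s})$ fit together correctly; the point you single out as the main obstacle is indeed the crux, and your handling of it is right, since $\int_0^\rho a^{\alpha-s}\ud a=\rho^{1+\alpha-s}/(1+\alpha-s)$ stays bounded uniformly for $s\in[\tfrac12,1)$ and so the error is killed by the factor $(1-s)$. The alternative you mention, applying D\'avila's BV version of the Bourgain--Br\'ezis--Mironescu asymptotics to $\chi_E$, is the shortcut most references take; your direct argument has the advantage of producing the constant $\omega_{N-1}$ transparently from the half-space model rather than importing it from a general formula.
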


The second one relates to the pointwise convergence of the fractional curvatures. It was proved in a more general setting in \cite{AV,CV2013,CDLNM}.
\begin{theorem}\label{convergence Hs}
    Let $E$ be a bounded, $C^{2}$ set. Then,
    \[ \lim_{s\to 1}(1-s)H^s_E=\omega_{N-1}H_E \]
    uniformly on $\bd E$.
\end{theorem}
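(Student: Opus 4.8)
The plan is to prove the convergence pointwise at each $x_0\in\bd E$ by a localization argument, set up so that every estimate is uniform in $x_0$; the perimeter statement of Theorem \ref{convergence Ps} then serves only to pin down the constant. After a rigid motion I would assume $x_0=0$, $\nu_E(0)=e_N$, and that on a cylinder $C_\rho=B'_\rho\times(-\rho,\rho)$ — with $B'_\rho\subset\R^{N-1}$ a ball and $\rho,\Lambda>0$ depending only on a uniform two-sided ball condition and on the $C^2$ bound of $\bd E$ — the set $E$ coincides with the subgraph $\{(y',y_N):y_N<u(y')\}$ of a function $u\in C^2(B'_\rho)$ with $u(0)=0$, $\nabla'u(0)=0$, $\|u\|_{C^2(B'_\rho)}\le\Lambda$ (primes denote operators in the tangential variables $y'\in\R^{N-1}$); for this orientation $H_E(0)=-\Delta'u(0)$. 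By compactness of $\bd E$, these $\rho,\Lambda$ and the Taylor modulus $\sigma$ in $u(y')=\tfrac12 D^2u(0)[y',y']+R(y')$, $|R(y')|\le\sigma(|y'|)|y'|^2$ with $\sigma(r)\to0$, may be chosen independently of $x_0$.

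Fix $\delta\in(0,\rho/2]$, to be sent to $0$ only after $s\to1$, and split $H^s_E(0)=I^{\mathrm{in}}_\delta+I^{\mathrm{out}}_\delta$ according to $B_\delta$ and its complement. The far part is harmless: $|I^{\mathrm{out}}_\delta|\le\int_{\R^N\setminus B_\delta}|y|^{-N-s}\ud y=\tfrac{N\omega_N}{s}\delta^{-s}$, so $(1-s)|I^{\mathrm{out}}_\delta|\to0$ as $s\to1$ for each fixed $\delta$. For the near part I would subtract the contribution of the tangent half-space $H_0=\{y_N<0\}$, whose kernel is odd in $y_N$ and therefore has vanishing principal value over $B_\delta$; the remainder is $-2(\chi_E-\chi_{H_0})$, supported on the thin slab $\{|y_N|\le|u(y')|\}$ and absolutely integrable because $|u(y')|\le\tfrac12\Lambda|y'|^2$ and $s<1$. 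Carrying out the $y_N$-integration by the elementary identity $\int_0^{u(y')}(|y'|^2+t^2)^{-(N+s)/2}\ud t=|y'|^{-N-s}u(y')(1+O_\Lambda(|y'|^2))$ and then inserting the Taylor expansion of $u$, the near part reduces, up to the errors discussed below, to the model integral
\[ -2\int_{B'_\delta}\frac{\tfrac12 D^2u(0)[y',y']}{|y'|^{N+s}}\ud y'. \]
Its off-diagonal terms vanish by symmetry; by isotropy each diagonal term equals $\tfrac{1}{N-1}\Delta'u(0)\int_{B'_\delta}|y'|^{2-N-s}\ud y'$, and polar coordinates in $\R^{N-1}$ give $\int_{B'_\delta}|y'|^{2-N-s}\ud y'=(N-1)\omega_{N-1}\,\tfrac{\delta^{1-s}}{1-s}$, so the model integral equals $-\,\omega_{N-1}\Delta'u(0)\,\tfrac{\delta^{1-s}}{1-s}$.

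Collecting the pieces, $(1-s)H^s_E(0)=-\,\omega_{N-1}\Delta'u(0)\,\delta^{1-s}+A(s,\delta)$, where $A$ is made of three contributions: the far part, bounded by $(1-s)\tfrac{2N\omega_N}{s}\delta^{-s}$; the kernel-expansion correction, bounded by $(1-s)C_\Lambda\delta^{3-s}$; and the Taylor-remainder correction, bounded by $C_N\,\sigma(\delta)\,\delta^{1-s}$ (only this last one would, before multiplication by $(1-s)$, carry a diverging $(1-s)^{-1}$, which then cancels). Letting $s\to1$ with $\delta$ fixed, the first two contributions vanish and $\delta^{1-s}\to1$, so $\limsup_{s\to1}|(1-s)H^s_E(0)+\omega_{N-1}\Delta'u(0)|\le C_N\,\sigma(\delta)$; sending now $\delta\to0$ gives $\lim_{s\to1}(1-s)H^s_E(0)=-\,\omega_{N-1}\Delta'u(0)=\omega_{N-1}H_E(0)$, the last identity with orientation conventions fixed consistently (as one checks on a ball, or via the first variation formula together with Theorem \ref{convergence Ps}). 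Since $\rho,\Lambda,\sigma$ were uniform over $\bd E$, the estimates are uniform in the base point, which upgrades the limit to uniform convergence on $\bd E$.

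I expect the main obstacle to be the bookkeeping of the two competing small parameters: every error must be organized so as to carry either an explicit extra power $\delta^\alpha$ ($\alpha>0$) or the modulus $\sigma(\delta)$, while the genuinely singular factor $(1-s)^{-1}\delta^{1-s}$ is kept exact until the very last step — in particular $\delta$ cannot be sent to $0$ before $s\to1$. A secondary technical point is to justify the manipulations of the principal-value integral (truncation at $B_\delta$, subtraction of the half-space, Fubini on the thin slab), all of which rely on the absolute integrability of the thin-slab remainder — and this is exactly where the hypotheses $\bd E\in C^2$ and $s<1$ are used.
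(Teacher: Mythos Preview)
The paper does not prove this theorem; it only records it with references to \cite{AV,CV2013,CDLNM}. Your argument is essentially the standard one from those sources---localize to a $C^2$ graph, split near/far at scale $\delta$, subtract the tangent half-space in the near part to reduce to an absolutely integrable thin-slab integral, extract the leading term via the second-order Taylor expansion of the graph function, and take the iterated limit $s\to1$ then $\delta\to0$---and it is correct, with the error bookkeeping (exactly one term carrying the genuine $(1-s)^{-1}\delta^{1-s}$ factor, all others carrying either an extra $\delta^\alpha$ or the modulus $\sigma(\delta)$) organized the right way. The only loose ends are cosmetic: the sign convention for $H^s_E$ that you already flag (the paper's numerator $\chi_E-\chi_{E^c}$ versus the more common $\chi_{E^c}-\chi_E$ flips one sign in your chain, to be fixed once on a ball), and the passage from the ball $B_\delta$ to the cylinder $B'_\delta\times\R$ when carrying out the $y_N$-integration over the thin slab, which costs an additional $O(\delta^{3-s})$ already absorbed in your kernel-expansion error.
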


Finally, we recall the pointwise convergence of the fractional Gagliardo seminorms to the Sobolev one. The classical proof is contained in \cite[Corollary 2]{BBM}, see also \cite[Proposition 3.7]{HP} for the same result in a more general setting.

\begin{theorem}\label{convergence seminorms}
Assume $ f \in H^s(\bd B)$. Then
$$ \lim_{s \to 1} (1-s) [f]_{H^{\frac{1+s}2}(\bd B)}^2= C\| \nabla f \|_{L^2(\bd B)}^2,$$
where $C>0 $ is a constant that depends only on $N$.
\end{theorem}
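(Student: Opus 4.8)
Write $\sigma:=\tfrac{1+s}{2}$, so that $(N-1)+2\sigma=N+s$ and
\[ [f]_{H^{\frac{1+s}{2}}(\bd B)}^2=\int_{\bd B}\int_{\bd B}\frac{|f(x)-f(y)|^2}{|x-y|^{N+s}}\,\udH(x)\,\udH(y). \]
As $s\to 1^-$ the index $\sigma\to1^-$, the kernel $|x-y|^{-(N+s)}$ concentrates on the diagonal of $\bd B\times\bd B$, and $(1-s)$ is the correct normalization. The plan is to run the Bourgain--Brezis--Mironescu scheme \cite{BBM} on the compact manifold $\bd B=\Sp^{N-1}$ (this is in fact a special case of \cite[Proposition~3.7]{HP}, which could also simply be invoked): (1) prove a uniform estimate $(1-s)[g]_{H^{\sigma}(\bd B)}^2\le C\|g\|_{H^1(\bd B)}^2$ for $s$ close to $1$; (2) prove the stated identity for $g\in C^\infty(\bd B)$; (3) conclude for general $f$ by density of $C^\infty(\bd B)$ in $H^1(\bd B)$.

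For step (1), cover $\bd B$ by finitely many geodesic balls supporting bi-Lipschitz charts, take a subordinate partition of unity, and split each resulting double integral at geodesic distance $\rho$, a fixed small constant. On the region $\{d(x,y)\ge\rho\}$ the kernel is bounded by $\rho^{-(N+s)}$, so that piece is at most $C(1-s)\rho^{-(N+s)}\|g\|_{L^2(\bd B)}^2\to0$. On $\{d(x,y)<\rho\}$, pulling back to the chart and using $|g(x+w)-g(x)|^2\le|w|^2\int_0^1|\nabla g(x+tw)|^2\,\ud t$ followed by polar coordinates in $w$ produces a bound of the form $\tfrac{\rho^{1-s}}{1-s}\|\nabla g\|_{L^2(\bd B)}^2$ (times fixed chart constants); multiplying by $(1-s)$ and using $\rho^{1-s}\le1$ gives the estimate.

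For step (2) the far region again drops out after the $(1-s)$ factor. For the near region fix $x$ and parametrize the geodesic ball about $x$ by the exponential map, $y=\exp_x(v)$ with $v\in T_x\bd B\cong\R^{N-1}$ and $|v|<\rho$; then $|x-y|=|v|(1+O(|v|^2))$, $\udH(y)=(1+O(|v|^2))\,\ud v$, and, since $g\in C^2$, $g(\exp_x v)-g(x)=\nabla_{\bd B}g(x)\cdot v+O(|v|^2)$. Substituting, the leading term equals
\[ \int_{\bd B}\int_{|v|<\rho}\frac{|\nabla_{\bd B}g(x)\cdot v|^2}{|v|^{N+s}}\,\ud v\,\udH(x)=\frac{\rho^{1-s}}{1-s}\int_{\bd B}\Big(\int_{\Sp^{N-2}}|\nabla_{\bd B}g(x)\cdot\omega|^2\,\ud\mathcal{H}^{N-2}(\omega)\Big)\udH(x), \]
by writing $v=r\omega$ and integrating $r^{-s}$, while all the error terms are $O(\rho^{2-s}/(2-s))$ because of the $O(|v|^2)$ remainders above. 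Multiplying by $(1-s)$ and letting $s\to1$, so that $\rho^{1-s}\to1$ and $(1-s)\,O(\rho^{2-s})\to0$, and using $\int_{\Sp^{N-2}}|\xi\cdot\omega|^2\,\ud\mathcal{H}^{N-2}(\omega)=\tfrac{\mathcal{H}^{N-2}(\Sp^{N-2})}{N-1}|\xi|^2$ by symmetry, we obtain the identity with $C=\tfrac{\mathcal{H}^{N-2}(\Sp^{N-2})}{N-1}$, which depends only on $N$.

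Finally, for step (3), given $f\in H^1(\bd B)$ choose $g_\varepsilon\in C^\infty(\bd B)$ with $g_\varepsilon\to f$ in $H^1(\bd B)$; since $h\mapsto(1-s)^{1/2}[h]_{H^{\sigma}(\bd B)}$ is a seminorm, the triangle inequality together with step (1) gives $\big|(1-s)^{1/2}[f]_{H^{\sigma}(\bd B)}-(1-s)^{1/2}[g_\varepsilon]_{H^{\sigma}(\bd B)}\big|\le(1-s)^{1/2}[f-g_\varepsilon]_{H^{\sigma}(\bd B)}\le C\|f-g_\varepsilon\|_{H^1(\bd B)}$, while $\|\nabla f-\nabla g_\varepsilon\|_{L^2(\bd B)}\to0$, so an $\varepsilon/3$ argument transfers the limit from $g_\varepsilon$ to $f$. (When $f\in H^s(\bd B)\setminus H^1(\bd B)$ both sides equal $+\infty$; this is the more delicate lower-semicontinuity half of the Bourgain--Brezis--Mironescu theorem and is not needed here, since in the applications $f$ is always a $C^1$ normal deformation of a ball.) I expect the only genuinely technical point to be the curvature bookkeeping in step (2) --- quantifying the discrepancy between geodesic and chordal distance and the Jacobian of $\exp_x$ --- but on the round sphere these quantities are explicit and of the claimed order $O(|v|^2)$, so they cause no real difficulty; the whole content is the concentration of the kernel.
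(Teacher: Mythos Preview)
Your proposal is correct, and in fact gives considerably more than the paper does: the paper does not prove this statement at all but simply records it as a known result, citing \cite[Corollary~2]{BBM} for the Euclidean case and \cite[Proposition~3.7]{HP} for the manifold setting. Your three-step argument (uniform bound, smooth case via Taylor expansion in normal coordinates, density) is precisely the Bourgain--Brezis--Mironescu scheme that the paper is invoking by reference, so there is no methodological difference to discuss --- you have just unpacked the citation. Your own remark that one ``could also simply invoke'' \cite[Proposition~3.7]{HP} is exactly what the paper does.
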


\section{A fractional  quantitative Alexandrov type estimate}
In this section, we are going to prove the quantitative Alexandrov inequality Theorem~\ref{Alex} in the nonlocal setting of the fractional perimeter. From now on we set
\[[f]_{\frac{1+s}{2}}^2\coloneqq [f]_{H^{\frac{1+s}{2}}(\bd B)}^2=\int_{\bd B}\int_{\bd B} \frac{|f(x)-f(y)|^2}{|x-y|^{N+s}} \udH_x \udH_y.  \]
We start by recalling representation formulas for the $s$-fractional perimeter and its first variation on smooth sets.
\begin{lemma}
The following equalities hold true:
\begin{enumerate}
    \item If $f \in C^1(\bd B)$ with $\|f\|_{\infty}$ sufficiently small, then
    \begin{equation}\label{P^s5}
    \begin{split}
        P^s(B_f)&=\frac{P^s(B)}{P(B)} \int_{\bd B} (1+f)^{N-s} \udH+\\
        &\quad+ \frac{1}{2} \int_{\bd B} \int_{\bd B} \int_{1+f(y)}^{1+f(x)} \int_{1+f(y)}^{1+f(x)} F_{|x-y|}(r,\rho) \ud r \ud \rho \udH_x \udH_y,
    \end{split}
    \end{equation}
    where, for every $x,$ $y \in \bd B$, we have set
    \begin{equation*}
        F_{|x-y|}(r,\rho):= 
        \frac{r^{N-1} \rho^{N-1}}{|r x-\rho y|^{N+s}}, \quad \forall r,\rho \in (0,+\infty) .
    \end{equation*}
    
    \item If $f \in C^1(\bd B)$ with $\|f\|_{\infty}$ sufficiently small, then we have
    \begin{equation}\label{first var}
    \begin{split}
        &\delta P^s(E_f)[\psi]= (N-s)\frac{P^s(B)}{P(B)} \int_{\bd B}         (1+f)^{N-s-1} \psi \udH\\
        &+\int_{\bd B} \int_{\bd B} \int_{f(y)}^{f(x)} \left ( \psi(x)F_{|x-y|}(1+f(x),1+\rho)-\psi(y)F_{|x-y|}(1+f(y),1+\rho) \right ) .
    \end{split}
    \end{equation}
\end{enumerate}
\end{lemma}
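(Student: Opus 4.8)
The plan is to establish \eqref{P^s5} by a direct computation in polar coordinates, and then to deduce \eqref{first var} from it by differentiating along the one–parameter family $f\rightsquigarrow f+t\psi$. Writing $z=r\omega$ with $\omega\in\bd B$, $r>0$, we have $B_f=\{r\omega:\ \omega\in\bd B,\ 0<r<1+f(\omega)\}$, so Fubini gives
\[
P^s(B_f)=\int_{\bd B}\int_{\bd B}\int_0^{1+f(\omega)}\int_{1+f(\sigma)}^{\infty}\frac{r^{N-1}\rho^{N-1}}{|r\omega-\rho\sigma|^{N+s}}\,\ud\rho\,\ud r\,\udH_\omega\,\udH_\sigma .
\]
Since $|\omega|=|\sigma|=1$, $|r\omega-\rho\sigma|^2=r^2+\rho^2-2r\rho\,\omega\cdot\sigma$ depends on $(\omega,\sigma)$ only through $|\omega-\sigma|$, so the inner integrand is $F_{|\omega-\sigma|}(r,\rho)$, and we record the two elementary facts $F_t(r,\rho)=F_t(\rho,r)$ and $F_t(\lambda r,\lambda\rho)=\lambda^{N-2-s}F_t(r,\rho)$. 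Splitting $\int_{1+f(\sigma)}^{\infty}=\int_{1+f(\omega)}^{\infty}+\int_{1+f(\sigma)}^{1+f(\omega)}$ (an oriented integral) decomposes the right–hand side as $\mathrm I+\mathrm{II}$. All the manipulations below are legitimate because $\|f\|_\infty$ is small, hence $B_f$ is a bounded set of class $C^1$ with $P^s(B_f)<\infty$, and each of $\mathrm I,\mathrm{II}$ turns out to be finite as well.

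\emph{The two pieces.} In $\mathrm I=\iint_{\bd B\times\bd B}\int_0^{1+f(\omega)}\int_{1+f(\omega)}^{\infty}F_{|\omega-\sigma|}(r,\rho)\,\ud\rho\,\ud r$ the inner bounds depend only on $\omega$; for fixed $\omega$ we substitute $r=(1+f(\omega))u$, $\rho=(1+f(\omega))v$ and use the homogeneity of $F$ together with the Jacobian $(1+f(\omega))^2$ to extract the factor $(1+f(\omega))^{N-2-s+2}=(1+f(\omega))^{N-s}$. The leftover integral $c:=\int_{\bd B}\int_0^1\int_1^{\infty}F_{|\omega-\sigma|}(u,v)\,\ud v\,\ud u\,\udH_\sigma$ is independent of $\omega$ by rotational invariance of $\bd B$, and $\int_{\bd B}c\,\udH_\omega=P^s(B)$, so $c=P^s(B)/P(B)$; this produces the first term of \eqref{P^s5}. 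For $\mathrm{II}=\iint_{\bd B\times\bd B}\int_0^{1+f(\omega)}\int_{1+f(\sigma)}^{1+f(\omega)}F_{|\omega-\sigma|}$ I use the symmetry $(\omega,r)\leftrightarrow(\sigma,\rho)$ together with $F_t(r,\rho)=F_t(\rho,r)$: on $\{f(\omega)>f(\sigma)\}$ the inner domain is $[0,1+f(\omega)]\times[1+f(\sigma),1+f(\omega)]$, and writing $[0,1+f(\omega)]=[0,1+f(\sigma)]\cup[1+f(\sigma),1+f(\omega)]$ and swapping variables on the first sub–rectangle shows that the off–square part exactly cancels half of the square part. Relabelling $(\omega,\sigma)\to(x,y)$ and symmetrizing leaves $\tfrac12\iint_{\bd B\times\bd B}\int_{1+f(y)}^{1+f(x)}\int_{1+f(y)}^{1+f(x)}F_{|x-y|}(r,\rho)\,\ud r\,\ud\rho\,\udH_x\,\udH_y$, i.e.\ the second term of \eqref{P^s5}.

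\emph{Differentiation.} Replacing $f$ by $f+t\psi$ in \eqref{P^s5} and differentiating at $t=0$, the first term contributes $(N-s)\tfrac{P^s(B)}{P(B)}\int_{\bd B}(1+f)^{N-s-1}\psi\,\udH$. For the second term set $G_{|x-y|}(\alpha,\beta):=\int_\beta^\alpha\int_\beta^\alpha F_{|x-y|}(r,\rho)\,\ud r\,\ud\rho$, so that the integrand equals $G_{|x-y|}(1+f(x)+t\psi(x),\,1+f(y)+t\psi(y))$; a Leibniz–rule computation using $F_t(r,\rho)=F_t(\rho,r)$ gives $\partial_\alpha G_{|x-y|}(\alpha,\beta)=2\int_\beta^\alpha F_{|x-y|}(\alpha,\rho)\,\ud\rho$ and $\partial_\beta G_{|x-y|}(\alpha,\beta)=-2\int_\beta^\alpha F_{|x-y|}(\beta,\rho)\,\ud\rho$. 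By the chain rule the $t$–derivative of the second term equals
\[
\tfrac12\iint_{\bd B\times\bd B}2\Big(\psi(x)\int_{1+f(y)}^{1+f(x)}F_{|x-y|}(1+f(x),\rho)\,\ud\rho-\psi(y)\int_{1+f(y)}^{1+f(x)}F_{|x-y|}(1+f(y),\rho)\,\ud\rho\Big)\udH_x\,\udH_y ;
\]
the factor $2$ cancels the $\tfrac12$, and the substitution $\rho=1+\rho'$ turns this into the second term of \eqref{first var}.

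\emph{Main obstacle.} The delicate point is the symmetrization in the computation of $\mathrm{II}$: one must keep track of the orientation of the inner integrals — in particular on $\{f(x)<f(y)\}$ — in order to land exactly on the square domain $[1+f(y),1+f(x)]^2$ with the precise constant $\tfrac12$. A secondary technical issue is justifying the differentiation under the integral sign: each of the two $\rho$–integrals in the last display is, on its own, only conditionally convergent as an integral over $\bd B\times\bd B$, so one has to keep the difference $\psi(x)F_{|x-y|}(1+f(x),\rho)-\psi(y)F_{|x-y|}(1+f(y),\rho)$ together and use the Lipschitz bounds $|f(x)-f(y)|+|\psi(x)-\psi(y)|\le C|x-y|$ (valid since $\|f\|_{C^1(\bd B)}$ is small) to produce a majorant of order $|x-y|^{-(N+s-2)}$ near the diagonal of $\bd B\times\bd B$, which is integrable for $s<1$.
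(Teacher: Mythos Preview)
Your proof is correct and follows the same approach as the paper: the paper simply cites \cite[Theorem~2.1]{FFMMM} for the polar-coordinate computation leading to \eqref{P^s5}, and then derives \eqref{first var} by differentiating $t\mapsto P^s(B_{f+t\psi})$ via the Leibniz rule for a double integral with variable limits --- exactly your $\partial_\alpha G$, $\partial_\beta G$ computation --- followed by the change of variable $\rho\mapsto 1+\rho$. Your write-up is in fact more detailed than the paper's on both points (you spell out the homogeneity/rotational-invariance argument for term~$\mathrm I$ and the symmetrization for term~$\mathrm{II}$, and you address the integrability needed to differentiate under the integral sign); the only cosmetic issue is that the sentence ``the off-square part exactly cancels half of the square part'' is phrased a bit confusingly --- what actually happens is that the off-square contributions from $(\omega,\sigma)$ and $(\sigma,\omega)$ cancel each other, leaving the full square term, which then picks up the factor $\tfrac12$ from the symmetrization $\mathrm{II}=\tfrac12\iint(J(\omega,\sigma)+J(\sigma,\omega))$.
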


\begin{proof}
By explicit computations one can obtain equation \eqref{P^s5}, see for example the calculations in the proof of \cite[Theorem 2.1]{FFMMM}.
To prove \eqref{first var}, we take the derivative
\begin{equation*}
    \dfrac{\ud}{\ud t}\Big|_{t=0}   P^s(B_{f+t \psi})
\end{equation*}
in formula \eqref{P^s5} and, recalling that
\begin{equation*}
\begin{split}
    \frac{\ud}{\ud t} \left [ \int_{\alpha(t)}^{\beta(t)} \int_{\alpha(t)}^{\beta(t)} f(r,\rho) \ud \rho \ud r\right ]&= \int_{\alpha(t)}^{\beta(t)} (f( \beta(t),\rho) \beta'(t)-f(\alpha(t),\rho) \alpha'(t) )\ud \rho \\
    &\quad+ \int_{\alpha(t)}^{\beta(t)} (f(r,\beta(t)) \beta'(t)-f(r,\alpha(t)) \alpha'(t)) \ud r
\end{split}
\end{equation*}
for every function $\alpha,$ $\beta:\R \to \R$ of class $C^1$ and $f \in L^1_{loc}(\R \times \R)$,
we conclude
\begin{align*}
\delta P^s(B_f)[\psi]= &\int_{\bd B} \int_{\bd B} \int_{1+f(y)}^{1+f(x)} \left ( \psi(x)F_{|x-y|}(1+f(x),\rho)-\psi(y)F_{|x-y|}(1+f(y),\rho) \right )  \ud \rho\\
&+(N-s)\frac{P^s(B)}{P(B)} \int_{\bd B} (1+f)^{N-s-1} \psi \udH.
\end{align*}
A simple change of coordinates then yields the thesis.
\end{proof}

\begin{lemma}
If $f \in C^1(\bd B)$ with $\|f\|_{\infty}$ sufficiently small, then we have
 \begin{equation}
    \begin{split}\label{varpsi1}
        \delta P^s(B_f)[1]&=(N-s)\frac{P^s(B)}{P(B)} \int_{\bd B} \left(1+(N-s-1)f+ O(f^2)) \right) \udH +O([f]_{\frac{1+s}{2}}^2  ),
    \end{split}
    \end{equation}
     \begin{equation}
    \begin{split}\label{varpsif}
        \delta P^s(B_f)[f]&=(N-s)\frac{P^s(B)}{P(B)} \int_{\bd B} \left(1+(N-s-1)f+ O(f^2)) \right) f\udH \\ \quad
        &+\int_{\bd B} \int_{\bd B}  \frac{(f(x)-f(y))^2 }{|x-y|^{N+s}}  +O ([f]_{\frac{1+s}{2}}^2) (\|f\|_{\infty}+\|f\|^2_{C^1}).
    \end{split}
    \end{equation}
\end{lemma}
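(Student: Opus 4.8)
The plan is to obtain both identities by carefully Taylor-expanding the exact first-variation formula \eqref{first var} around $f\equiv 0$, keeping track of the orders in $\|f\|_\infty$ and $[f]_{\frac{1+s}{2}}$. Write $\delta P^s(B_f)[\psi]$ as the sum of the ``leading'' boundary term
\[
(N-s)\frac{P^s(B)}{P(B)}\int_{\bd B}(1+f)^{N-s-1}\psi\udH
\]
and the ``double-integral'' term
\[
I[\psi]:=\int_{\bd B}\int_{\bd B}\int_{f(y)}^{f(x)}\Bigl(\psi(x)F_{|x-y|}(1+f(x),1+\rho)-\psi(y)F_{|x-y|}(1+f(y),1+\rho)\Bigr)\ud\rho.
\]
For the leading term, expand $(1+f)^{N-s-1}=1+(N-s-1)f+O(f^2)$ with the $O(f^2)$ uniform, which immediately produces the stated leading contributions in both \eqref{varpsi1} and \eqref{varpsif} (after multiplying by $\psi=1$ or $\psi=f$). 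The work is entirely in estimating $I[1]$ and $I[f]$.

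For $I[1]$: with $\psi\equiv1$ the integrand becomes $F_{|x-y|}(1+f(x),1+\rho)-F_{|x-y|}(1+f(y),1+\rho)$. First I would record that at $f\equiv0$ one has $F_{|x-y|}(1,1)=|x-y|^{-(N+s)}$ and that $F_{|x-y|}(r,\rho)=\frac{r^{N-1}\rho^{N-1}}{|rx-\rho y|^{N+s}}$ is smooth in $(r,\rho)$ near $(1,1)$ with derivatives bounded by $C|x-y|^{-(N+s)}$ (this is the one computation to do carefully: differentiate in $r$ and $\rho$ and bound $|rx-\rho y|$ below by a multiple of $|x-y|$ for $r,\rho$ close to $1$, using $|x|=|y|=1$). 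Then Taylor-expand in $r$ around $r=1+f(y)$ to get $F_{|x-y|}(1+f(x),1+\rho)-F_{|x-y|}(1+f(y),1+\rho)=(f(x)-f(y))\,\partial_r F_{|x-y|}(1+f(y),1+\rho)+O\bigl((f(x)-f(y))^2|x-y|^{-(N+s)}\bigr)$, and integrate the inner $\rho$-variable over the interval $[f(y),f(x)]$ of length $|f(x)-f(y)|$. The linear term yields $(f(x)-f(y))^2\cdot O(|x-y|^{-(N+s)})$ by the mean value theorem in $\rho$ as well, and the quadratic term contributes $O\bigl(|f(x)-f(y)|^3|x-y|^{-(N+s)}\bigr)$, which is $O(\|f\|_\infty[f]^2_{\frac{1+s}{2}})$; all told $I[1]=O([f]^2_{\frac{1+s}{2}})$, giving \eqref{varpsi1}.

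For $I[f]$: now $\psi=f$, so the integrand is $f(x)F_{|x-y|}(1+f(x),1+\rho)-f(y)F_{|x-y|}(1+f(y),1+\rho)$. Split it as $f(x)\bigl(F_{|x-y|}(1+f(x),1+\rho)-F_{|x-y|}(1,1)\bigr)-f(y)\bigl(F_{|x-y|}(1+f(y),1+\rho)-F_{|x-y|}(1,1)\bigr)+(f(x)-f(y))|x-y|^{-(N+s)}$. Integrating the last, constant-in-$\rho$, piece over $\rho\in[f(y),f(x)]$ gives exactly $\frac{(f(x)-f(y))^2}{|x-y|^{N+s}}$, the main term in \eqref{varpsif}. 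For the remaining two pieces, use the same first-order Taylor expansion of $F$ and the fact that on the interval $[f(y),f(x)]$ both $1+\rho$ and the base point differ from $1$ by $O(\|f\|_\infty)$; each factor $F_{|x-y|}(1+f(\cdot),1+\rho)-F_{|x-y|}(1,1)$ is thus $O((\|f\|_\infty+|f(x)-f(y)|)|x-y|^{-(N+s)})$, and after multiplying by $f(x)$ or $f(y)$ and integrating the $\rho$-interval of length $|f(x)-f(y)|$, one gets a bound $O\bigl(|f(x)-f(y)|^2|x-y|^{-(N+s)}\bigr)(\|f\|_\infty+\|f\|_{C^1})$. Recombining the $f(x)$ and $f(y)$ contributions so as to extract a factor $(f(x)-f(y))$ from the $C^1$-Lipschitz bound $|f(x)-f(y)|\le\|f\|_{C^1}|x-y|$ when needed, and integrating over $\bd B\times\bd B$, produces the claimed error $O([f]^2_{\frac{1+s}{2}})(\|f\|_\infty+\|f\|^2_{C^1})$.

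The main obstacle is purely the bookkeeping of error orders: one must consistently turn a factor $|f(x)-f(y)|$ into either a contribution to $[f]^2_{\frac{1+s}{2}}=\int\int|f(x)-f(y)|^2|x-y|^{-(N+s)}$ or, via the Lipschitz bound $|f(x)-f(y)|\le\|f\|_{C^1}|x-y|$, into an integrable kernel $|x-y|^{-(N+s)+1}$ on the compact manifold $\bd B$, and to verify that the worst terms really do land in $O([f]^2_{\frac{1+s}{2}})(\|f\|_\infty+\|f\|^2_{C^1})$ rather than something larger. No single estimate is deep; the care lies in uniformity of the Taylor remainders for $F_{|x-y|}$ near $(1,1)$ and in not losing a power of $|x-y|$ when passing between the seminorm and the Lipschitz bounds.
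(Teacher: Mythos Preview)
There is a genuine gap, and it is precisely at the point you flag as ``the one computation to do carefully.'' Your claim that the derivatives of $F_{|x-y|}(r,\rho)$ are bounded by $C|x-y|^{-(N+s)}$ for $(r,\rho)$ near $(1,1)$ is false. Differentiating $|rx-\rho y|^{-(N+s)}$ introduces an extra factor $|rx-\rho y|^{-1}$, and your own lower bound $|rx-\rho y|\ge c|x-y|$ only yields $|\partial_r F|\le C|x-y|^{-(N+s+1)}$. More precisely, from $|rx-\rho y|^2=(r-\rho)^2+r\rho|x-y|^2$ one computes that $\partial_r F$ contains a term of size $(r-\rho)\,|x-y|^{-(N+s+2)}$; in the relevant range $|r-\rho|\le\|f\|_{C^1}|x-y|$ this is $O(\|f\|_{C^1}|x-y|^{-(N+s+1)})$, not $O(|x-y|^{-(N+s)})$. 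Feeding the correct bound into your $I[1]$ argument produces an error of the form
\[
\int_{\bd B}\int_{\bd B}\frac{|f(x)-f(y)|^3}{|x-y|^{N+s+2}}\udH_x\udH_y,
\]
and no combination of the Lipschitz bound and Cauchy--Schwarz makes this finite: after three Lipschitz replacements one is left with $\int_{\bd B}|x-y|^{-(N+s-1)}\udH_y$, which diverges on the $(N-1)$--sphere since $N+s-1>N-1$.

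The paper's proof does not rely on a uniform derivative bound. Instead it expands the denominator explicitly via $|(1+u)x-(1+\rho)y|^2=(u-\rho)^2+(1+u)(1+\rho)|x-y|^2$ and isolates the dangerous contribution $C\,(u-\rho)^2/|x-y|^{N+s+2}$ as a separate term in $g$. For $\psi=1$ this term cancels \emph{exactly} after the $\rho$--integration, because $\int_{f(y)}^{f(x)}(f(x)-\rho)^2\ud\rho=\int_{f(y)}^{f(x)}(f(y)-\rho)^2\ud\rho$. For $\psi=f$ it survives as $\frac{(f(x)-f(y))^4}{|x-y|^{N+s+2}}$, and now \emph{two} Lipschitz factors restore integrability and place it inside $O([f]_{\frac{1+s}{2}}^2)\|f\|_{C^1}^2$. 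Your Taylor-in-$r$ approach blurs this structure and cannot see the cancellation; to repair the argument you must track the $(r-\rho)^2/|x-y|^2$ part of $F$ separately rather than absorbing it into a crude derivative bound.
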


\begin{proof}
First, we remark that, by expanding the first term in \eqref{first var}, we obtain
\begin{equation*}
    \begin{split}
      \delta P^s(B_f)[\psi]&= (N-s)\frac{P^s(B)}{P(B)} \int_{\bd B}         (1+(N-s-1)f+O(f^2)) \psi \udH \\
        &+\int_{\bd B} \int_{\bd B} \int_{f(y)}^{f(x)} \left ( \psi(x)F_{|x-y|}(1+f(x),1+\rho)-\psi(y)F_{|x-y|}(1+f(y),1+\rho) \right ) .
    \end{split}
    \end{equation*}
Now, set $u=f(x)$ and $v=f(y)$, and define the auxiliary function
$$g(1+u,1+v,\rho):= \psi(x)F_{|x-y|}(1+u,1+\rho)-\psi(y)F_{|x-y|}(1+v,1+\rho).$$
We remark that 
\begin{equation}
\label{here}
\begin{split}
    F_{|x-y|}(1+u,1+\rho)&= \frac{(1+(N-1)u+O(u^2))(1+(N-1) \rho +O(\rho^2))}{|(1+u) x-(1+\rho) y|^{N+s}}\\
    &= \frac{(1+(N-1)u+O(u^2))(1+(N-1) \rho +O(\rho^2))}{|(u-\rho)^2+ (1+u)(1+\rho) |x-y|^2|^{\frac{N+s}{2}}}\\
    &= \frac{1+(N-1)u+(N-1)\rho +O(u^2+\rho^2)}{|x-y|^{N+s}((u-\rho)^2/|x-y|^2 +u+\rho +u \rho +1)^{\frac{N+s}{2}}}.
    \end{split}
\end{equation}
Now, since $\|f\|_{C^1} \le \delta$ and $\rho $ is between $f(y)$ and $f(x)$, $|u-\rho|\le \delta |x-y|$, we can then expand \eqref{here} and obtain
\begin{equation*}
\begin{split}
     &\frac{1+(N-1)u+(N-1)\rho +O(u^2+\rho^2)}{|x-y|^{N+s}} \left (1-\frac{N+s}{2}\left(\frac{(u-\rho)^2}{|x-y|^2} +u+\rho +u \rho\right)\right)\\
     &=\frac{1+(N-1-\frac{N+s}{2})(u+\rho) }{|x-y|^{N+s}}-C\frac{(u-\rho)^2}{|x-y|^{N+s+2}}+ \frac{O(u^2+\rho^2)}{|x-y|^{N+s}},
    \end{split}
\end{equation*}
from which we deduce
\begin{align*}
    g(1+u,1+v,\rho)&= \frac{\psi(x)-\psi(y)}{|x-y|^{N+s}} +C_1\left (\dfrac{\psi(x)(u+\rho)}{|x-y|^{N+s}}
-\dfrac{\psi(y)(v+\rho)}{|x-y|^{N+s}}\right )\\
&+C_2\left (-\frac{\psi(x)(u-\rho)^2}{|x-y|^{N+s+2}}+\frac{\psi(y)(v-\rho)^2}{|x-y|^{N+s+2}}\right )+\frac{O(u^2+v^2+\rho^2)}{|x-y|^{N+s}},
\end{align*}
where the constants $C_{1,2}$ can also be non-positive.
Finally, we obtain
\begin{equation}
    \begin{split}
        \int_{\bd B} &\int_{\bd B} \int_{f(y)}^{f(x)} g(1+f(x),1+f(y),1+\rho) \\
        &\quad = 
    \int_{\bd B} \int_{\bd B}  \frac{(\psi(x)-\psi(y))(f(x)-f(y)) }{|x-y|^{N+s}} \\
    &\quad +C_1\int_{\bd B} \int_{\bd B}  \frac{(f(x)-f(y))(3f(x)/2 +f(y))}{|x-y|^{N+s}}\psi(x) \nonumber\\
    &\quad +C_2 \int_{\bd B} \int_{\bd B}  \frac{(\psi(x)-\psi(y))(f(x)-f(y))^3}{|x-y|^{N+s+2}}\\
    &\quad +O\left ( \int_{\bd B} \int_{\bd B}  \frac{(f(x)-f(y)) }{|x-y|^{N+s}}f^2 \right ).
    \end{split}
\end{equation}
If $\psi =1$, using the fact that, by symmetry,
\begin{equation}\label{simmetr}
    \int_{\bd B} \int_{\bd B}  \frac{(f(x)-f(y))f(x)}{|x-y|^{N+s}} =\dfrac 12[f]_{\frac{1+s}{2}}^2,
\end{equation}
we obtain
\begin{equation*}
\begin{split}
    &\int_{\bd B} \int_{\bd B} \int_{f(y)}^{f(x)} g(1+f(x),1+f(y),1+\rho) =O  ( [f]_{\frac{1+s}{2}}^2  ).
    \end{split}
\end{equation*}
If $\psi =f$, using again \eqref{simmetr}, we get
\begin{equation*}
 \begin{split}
        \int_{\bd B} \int_{\bd B} \int_{f(y)}^{f(x)} g(1+f(x),1+f(y),1+\rho) &= 
    \int_{\bd B} \int_{\bd B}  \frac{(f(x)-f(y))^2 }{|x-y|^{N+s}} \\
    &\quad +O ([f]_{\frac{1+s}{2}}^2) (\|f\|_{\infty}+\|f\|^2_{C^1}).
    \end{split}
\end{equation*}
\end{proof}

In order to prove Theorem \ref{Alex}, we need the following lemma, which states the coercivity of the second variation of the fractional perimeter of a ball with respect to normal deformations. Its proof is contained in \cite[Theorem 8.1]{FFMMM}. We start by defining 
\begin{equation}\label{first eig}
    \lambda_1^s:= s(N-s)\dfrac{P^s(B)}{P(B)}.
\end{equation}

\begin{lemma}\label{lemma 1.4}
There exists $\delta>0$ small  such that, if $f\in C^1(\bd B)$ with $\|f\|_{C^1(\bd B)}\le \delta$, $|B_f|=\omega_N$ and $\textnormal{bar}(B_f)=0$, then we have
\begin{align*}
    \delta^2 P^s(B)[f]&=\int_{\bd B}\int_{\bd B} \frac{(f(x)-f(y))^2}{|x-y|^{N+s}} \udH_x \udH_y-\lambda_1^s \int_{\bd B} |f|^2 \udH\\
    &\geq \dfrac 14\Big ([f]_{\frac{1+s}{2}}^2+\lambda_1^s\|f\|_{L^2(\bd B)}^2 \Big).
\end{align*}
\end{lemma}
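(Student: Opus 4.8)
The statement to prove is the coercivity of the second variation $\delta^2 P^s(B)[f]$ on the class of admissible $f$, and the reference \cite[Theorem 8.1]{FFMMM} is cited for it; so the task is to recall (and possibly streamline) that argument. The natural route is via spherical harmonics. Since $\bd B = \mathbb S^{N-1}$, expand $f = \sum_{k\ge 0} \sum_{i} a_{k,i} Y_{k,i}$ in the orthonormal basis of spherical harmonics, where $Y_{k,i}$ is the $i$-th harmonic of degree $k$. The quadratic form $\int_{\bd B}\int_{\bd B} \tfrac{(f(x)-f(y))^2}{|x-y|^{N+s}}\udH_x\udH_y$ diagonalizes in this basis: there exist eigenvalues $\mu_k^s$ (depending on $N,s$, computable via a Funk–Hecke type identity) with $\int\int \tfrac{(f(x)-f(y))^2}{|x-y|^{N+s}} = \sum_k \mu_k^s \sum_i a_{k,i}^2$. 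One checks $\mu_0^s = 0$, $\mu_1^s = \lambda_1^s$ (this is exactly the normalization \eqref{first eig}, and is the content of the fact that translations are in the kernel of $\delta^2 P^s$ on $B$ up to the volume term), and crucially that $k\mapsto \mu_k^s$ is strictly increasing, so $\mu_k^s \ge \mu_2^s > \lambda_1^s$ for all $k\ge 2$.

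\textbf{Key steps.} First, reduce to the purely quadratic ("linearized") form: the definition of $\delta^2 P^s(B)[f]$ already given in the statement is $Q(f) := [f]_{\frac{1+s}{2}}^2 - \lambda_1^s\|f\|_{L^2}^2 = \sum_k (\mu_k^s - \lambda_1^s)\sum_i a_{k,i}^2$. Second, use the two geometric constraints to kill the low modes. The volume constraint $|B_f| = \omega_N$ together with $\|f\|_{C^1}\le\delta$ forces $\big|\int_{\bd B} f\big| = \big|\int_{\bd B}\big((1+f)^N/N - 1/N\big) - \tfrac12\int(N-1)f^2 + \dots\big| \lesssim \|f\|_{L^2}^2$, i.e. the degree-$0$ coefficient $a_0$ satisfies $|a_0|\le C\|f\|_{L^2}^2$; similarly the barycenter condition $\mathrm{bar}(B_f)=0$ gives, expanding $\int_{B_f} x\,\mathrm dx$ to first order, $|a_{1,i}|\le C\|f\|_{L^2}^2$ for the three degree-$1$ coefficients. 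Hence the mass of $f$ in modes $k=0,1$ is $O(\|f\|_{L^2}^4)$, which is negligible. Third, on the remaining modes $k\ge2$ use $\mu_k^s - \lambda_1^s \ge \mu_2^s - \lambda_1^s =: 2c_0 > 0$ and also $\mu_k^s - \lambda_1^s \ge \tfrac12\mu_k^s$ for $k\ge2$ large (since $\mu_k^s\to\infty$), to get $Q(f) \ge c_0'\big(\sum_{k\ge2}\mu_k^s\sum_i a_{k,i}^2 + \sum_{k\ge2}\sum_i a_{k,i}^2\big) = c_0'\big([f_{\ge2}]_{\frac{1+s}{2}}^2 + \|f_{\ge2}\|_{L^2}^2\big)$, where $f_{\ge2}$ is the projection onto modes $\ge2$. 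Fourth, absorb the error: since $f - f_{\ge2}$ has $L^2$-norm $O(\|f\|_{L^2}^2)$ and (being a combination of finitely many fixed smooth harmonics) also Gagliardo seminorm $O(\|f\|_{L^2}^2) \le O(\delta)\|f\|_{L^2}$, for $\delta$ small enough one replaces $f_{\ge2}$ by $f$ on the right-hand side at the cost of a factor $1/2$, reaching $Q(f)\ge \tfrac14\big([f]_{\frac{1+s}{2}}^2 + \lambda_1^s\|f\|_{L^2}^2\big)$ after adjusting constants (note $\lambda_1^s$ is bounded above and below away from $0$ uniformly, or one simply tracks the $\lambda_1^s$-dependence through the $k=1$ comparison).

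\textbf{Main obstacle.} The crux is the eigenvalue analysis: establishing that $Q$ diagonalizes over spherical harmonics with eigenvalues $\mu_k^s$ and, above all, proving the \emph{strict monotonicity} $\mu_2^s > \mu_1^s = \lambda_1^s$ with a gap that is quantitative (and, for the scaling statements elsewhere in the paper, behaves correctly as $s\to1$ after multiplying by $1-s$). The eigenvalues have an explicit integral representation $\mu_k^s = \int_{\bd B}\tfrac{1 - P_k(\langle x,e\rangle)}{|x-e|^{N+s}}\udH(x)$ where $P_k$ is the appropriate ultraspherical/Gegenbauer polynomial normalized so $P_k(1)=1$, and monotonicity reduces to a pointwise inequality among these polynomials on $[-1,1]$ integrated against the positive kernel $|x-e|^{-(N+s)}$; this is where the real work of \cite[Theorem 8.1]{FFMMM} lies, and I would cite it rather than reprove it. A secondary (routine but necessary) point is the careful Taylor expansion of the volume and barycenter functionals to confirm the $O(\|f\|_{L^2}^2)$ bounds on $a_0$ and $a_{1,i}$ — these use $|B_f| = \int_{\bd B}\int_0^{1+f}\! r^{N-1}\,\mathrm dr\,\udH = \tfrac1N\int_{\bd B}(1+f)^N\udH$ and the analogous polar formula for the barycenter, both elementary.
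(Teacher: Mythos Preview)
Your proposal is correct and is precisely the spherical-harmonics argument of \cite[Theorem 8.1]{FFMMM}, which is exactly what the paper invokes: the paper gives no independent proof of this lemma and simply cites that reference. Two minor remarks: you write ``the three degree-$1$ coefficients'', but in dimension $N$ there are $N$ of them; and the step ``$\mu_k^s-\lambda_1^s\ge\tfrac12\mu_k^s$ for $k\ge2$ large'' should rather be phrased as $\mu_k^s-\lambda_1^s\ge(1-\lambda_1^s/\mu_2^s)\,\mu_k^s$ for \emph{all} $k\ge2$, which is what actually gives the uniform coercivity constant.
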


We are now in position to prove Theorem \ref{Alex}.
\begin{proof}[Proof of Theorem \ref{Alex}]
We start by proving item \textit{i)}. Without loss of generality, we assume that $\|H^s_{B_f} - \bar H^s_{B_f}\|_{L^2}\leq1$.
Let $\Phi :\bd B \to \bd B_f \subset \R^N$ be the map defined by $\Phi (x)= (1+~f(x))x,$ by direct computations one can prove that 
%$\ud \Phi (x)=(1+f(x)) I + x \otimes \nabla f (x)$, 
$$J \Phi(x) = (1+f(x))^{N-1}(1+(1+f(x))^{-2}|\nabla f(x)|^{1/2}.$$ 
For every $\psi \in C^{1}(\bd B)$, let 
\begin{equation*}
      \overline{\psi}:\R^N \to \R^N, \quad \bar\psi(x):= \frac{x}{\vert x\vert} \psi\left(\frac{x}{\vert x\vert}\right).
\end{equation*} 
Employing the area formula we get
\begin{equation*}
\begin{split}
    \delta P^s(B_f)[\psi]&=  \int_{\bd B_f} H^s_{ B_f}\nu_{B_f} \cdot \overline{\psi}   \udH \\
    &=  \int_{\bd B} H^s_{ B_f}(p) \nu_{B_f}(p) \cdot x \psi(x) J\Phi(x) \udH_x \\
    &=\int_{\partial B} H^s_{ B_f}(p)\,\psi(x)\,(1+f(x))^{N-1}  \ud \mathcal{H}^{N-1}_x,
\end{split}
\end{equation*}
where we have set $p=(1+f(x))x$ (for more details see \cite[Section 1]{MPS} and \cite[Section 3]{DeKu}).
Now, by a simple Taylor expansion we obtain
\begin{equation}\label{var1}
    \delta P^s(B_f)[\psi] =\int_{\partial B} H^s_{ B_f}(p)\,\psi(x)\,(1+(N-1)f(x)+O(f^2))  \ud \mathcal{H}^{N-1}_x.
\end{equation}
We recall that 
\begin{equation*}
    H^s_{ B}(x)=(N-s) \frac{P^s(B)}{P(B)} \quad \textnormal{for all } x \in \bd B.
\end{equation*}
If $\psi =1$, by combining formulas \eqref{var1} and \eqref{varpsi1}, we infer
\begin{equation}\label{eq_lin3_gen}
\begin{split}
    \int_{\bd B}  (H^s_{ B_f}(p) -H^s_{ B} )(1+(N-1)f(x)+O(f^2))  \udH_x  = \int_{\bd B} O(f) \udH+ O([f]_{\frac{1+s}{2}}^2)
    \end{split}
\end{equation}
and if $\psi=f$, by combining equations \eqref{var1} and \eqref{varpsif}, we get
\begin{equation}\label{eq_lin6666_gen}
\begin{split}
    &\int_{\bd B}\int_{\bd B} \frac{(f(x)-f(y))^2}{|x-y|^{N+s}} \udH_x \udH_y - s(N-s) \frac{P^s(B)}{P(B)} \int_{\bd B} f^2 \udH\\
    & =
    \int_{\bd B} \left (H^s_{ B_f}(p) -H^s_{ B} \right)(1+(N-1)f(x)+O(f^2)) f(x) \udH_x \\
    &\quad  +O ([f]_{\frac{1+s}{2}}^2) (\|f\|_{\infty}+\|f\|^2_{C^1}).
    \end{split}
\end{equation}
Using the same arguments of the proof of \cite[Theorem 1.3]{DeKu} (see also \cite[Theorem 1.3]{MPS}) we can conclude. Anyway for the interested reader we report a sketch of the proof.

By \eqref{eq_lin3_gen}, for $\delta$ sufficiently small, using Hölder's inequality we obtain 
\begin{align*}
    \left| \bar H^s_{B_f}-H^s_B \right|
    &\le \left|-\fint_{\bd B}  (H^s_{B_f}-H^s_B)((N-1)f+O(f^2))\udH \right|\\
    &\quad +\int_{\bd B}O(|f|)\ud \mathcal H^{N-1}+O([f]^2_{\frac{1+s}{2}})\\
    &\le \left| \fint_{\bd B}  (H^s_{B_f}-\bar H^s_{B_f})((N-1) f+O(f^2))\udH \right| \\
    &\quad+ \left| \fint_{\bd B}  (\bar H^s_{B_f}-H^s_B) ((N-1) f+O(f^2))\udH \right| \\
    &\quad +\int_{\bd B}O(|f|)\ud \mathcal H^{N-1}+O([f]^2_{\frac{1+s}{2}})\\
    &\le \delta\, \dfrac{N-1+C\delta}{P(B)^{1/2}}\|H^s_{B_f}-\bar H^s_{B_f}\|_{L^2}+\delta  \left(N-1+C\delta \right )|\bar H^s_{B_f}-H^s_B|\\
    &\quad+ \int_{\bd B}O(|f|)\ud \mathcal H^{N-1}+O([f]^2_{\frac{1+s}{2}}),
\end{align*}
with $C=C(N)$. 
For $\delta $ small enough, recalling that $\|H_{B_f} - \bar H_{B_f}\|_{L^2}\leq1$, the previous inequality implies
\begin{equation}
    \frac 12 |\bar H^s_{B_f}-H^s_B|\le  C\delta \|H^s_{B_f}-\bar H^s_{B_f}\|_{L^2}+ \int_{\bd B}O(|f|)\ud \mathcal H^{N-1} +O([f]^2_{\frac{1+s}{2}})\le C\delta.
    \label{eq_lin7_gen}
\end{equation}
By \eqref{eq_lin6666_gen}, using again Hölder's inequality and by the previous remark, we get
\begin{align}
    \label{end_chain_gen}
    &\int_{\bd B}\int_{\bd B} \frac{(f(x)-f(y))^2}{|x-y|^{N+s}} \udH_x \udH_y - s(N-s) \frac{P^s(B)}{P(B)} \int_{\bd B} f^2\udH \nonumber\\
    &=\int_{\bd B}\left(  H^s_{B_f}(p)-H^s_B \right)( 1+(N-1) f + O(f^2))f\udH\nonumber\\
    &\quad+O ([f]_{\frac{1+s}{2}}^2) (\|f\|_{\infty}+\|f\|^2_{C^1})\nonumber\\
    &= \int_{\bd B} (H^s_{B_f}(p)-\bar H^s_{B_f})( 1+(N-1) f + O(f^2))f\udH\nonumber\\
    &\quad+\int_{\bd B}(\bar H^s_{B_f}-H^s_B)( 1+(N-1) f + O(f^2))f\udH\nonumber\\
    &\quad+O ([f]_{\frac{1+s}{2}}^2) (\|f\|_{\infty}+\|f\|^2_{C^1})\nonumber\\
    &\leq C\|H^s_{B_f}-\bar H^s_{B_f}\|_{L^2}\|f\|_{L^2}+ |\bar H^s_{B_f}-H^s_B|\int_{\bd B} (1+(N-1) f + O(f^2))f\udH\nonumber\\
    &\quad+O ([f]_{\frac{1+s}{2}}^2) (\|f\|_{\infty}+\|f\|^2_{C^1}).
\end{align}
Since $|B_f|=\omega_N$, we have \begin{equation}\label{nonzero mean curv}
    \Big | \int_{\bd B} f \udH \Big | = \int_{\bd B} O(f^2) \udH.
\end{equation}
By \eqref{nonzero mean curv} and \eqref{eq_lin7_gen}, we obtain
\[|\bar H^s_{B_f}-H^s_B|\int_{\bd B} (f+O(f^2))\udH  \le  \delta \int_{\bd B}O(f^2).\]
Finally, by the above inequality, \eqref{nonzero mean curv} again and by combining \eqref{end_chain_gen} with \eqref{eq_lin7_gen} 
we deduce that, for any $\eta>0$, it holds 
\begin{align}
   &\int_{\bd B}\int_{\bd B} \frac{(f(x)-f(y))^2}{|x-y|^{N+s}} \udH_x \udH_y - s(N-s) \frac{P^s(B)}{P(B)} \int_{\bd B} f^2\udH \nonumber\\
   &\quad \leq C\|H^s_{B_f}-\bar H^s_{B_f}\|_{L^2}\|f\|_{L^2}+C\delta (\|f\|_{L^2}^2+[f]_{\frac{1+s}{2}}^2)\label{half_gen}\\
  &\quad \leq \dfrac 1{\eta} C^2\|H^s_{B_f}-\bar H^s_{B_f}\|_{L^2}^2+\eta\|f\|_{L^2}^2+C\delta (\|f\|_{L^2}^2+[f]_{\frac{1+s}{2}}^2).
  \label{end_gen}
\end{align}
The conclusion then follows combining \eqref{end_gen} with Lemma \ref{lemma 1.4} and taking $\delta$ and $\eta$ sufficiently small. 

Item \textit{ii)} then follows from item \textit{i)}, remarking that \eqref{half_gen} can be read as 
\begin{align*}
   &(1-s)\left(\int_{\bd B}\int_{\bd B} \frac{(f(x)-f(y))^2}{|x-y|^{N+s}} \udH_x \udH_y - s(N-s) \frac{P^s(B)}{P(B)} \int_{\bd B} f^2\udH \right)\\
   &\quad \leq C\Big\|(1-s)\left( H^s_{B_f}-\bar H^s_{B_f}\right)\Big\|_{L^2}\|f\|_{L^2}+C\delta (\|f\|_{L^2}^2+[f]_{\frac{1+s}{2}}^2).
\end{align*}
Thus, we obtain 
\[ \dfrac{1-s}4\left( \lambda_1^s \|f\|^2_{L^2} + [f]^2_{\frac{1+s}2}  \right)\le \dfrac{C^2}\eta \Big\|(1-s)\left( H^s_{B_f}-\bar H^s_{B_f}\right)\Big\|^2_{L^2} + \eta \|f\|^2_{L^2} + C\delta (\|f\|_{L^2}^2+[f]_{\frac{1+s}{2}}^2).
\]
Recalling \eqref{first eig} and the pointwise convergence Theorem \ref{convergence Ps}, we conclude. Finally, item \textit{iii)} is a corollary of item \textit{ii)} and Theorems \ref{convergence Hs} and \ref{convergence seminorms}, recalling that Theorem \ref{convergence Ps} implies
\[ \lim_{s\to 1}(1-s)\lambda_1^s=(N-1)\omega_{N-1}. \]
We just remark that, since the convergence is uniform, $(1-s)\bar H^s_{B_f}\to \bar H_{B_f}.$
\end{proof}

\section{The asymptotic of the discrete volume-preserving mean curvature flow}
In this section we start by introducing the incremental minimum problem which defines the discrete-in-time approximation of the volume preserving fractional mean curvature flow.

Let $E \ne \emptyset$ be a bounded, measurable subset of $\R^N$. In the following we will always assume that $E$ coincides with its Lebesgue representative.
Fixed $h>0$, $m>0$, we consider the minimum problem
\begin{equation}
    \min \left \{ P^s(F)+\frac{1}{h} \int_{F} \sd_E(x) \ud x +\frac{1}{h^{\frac{s}{s+1}}} ||F|-m| : F \subset \R^N\right \},
    \label{pb_min_1}
\end{equation}
where $\sd_E(x):= \dist_E (x)-\dist_{E^c}(x)$ is the signed distance from the set $E$. Observe that the minimum problem \eqref{pb_min_1} is equivalent to the problem 
\begin{equation*}
    \min \left \{ P^s(F)+\frac{1}{h} \int_{F \triangle E} \dist_{\bd E}(x) \ud x +\frac{1}{h^{\frac{s}{s+1}}} ||F|-m| : F \subset \R^N \right \}.
\end{equation*}
We set $\mathcal{F}_h(\cdot,\, E): \mathrm{M}_f(\mathbb{R}^N) \rightarrow (-\infty,\,+\infty]$ the functional
$$ \mathcal{F}_h(F,\, E)=  P^s(F)+\frac{1}{h} \int_{F} \sd_E(x) \ud x +\frac{1}{h^{\frac{s}{s+1}}} ||F|-m|.$$

The following proposition recalls some properties of minimizers of problem \eqref{pb_min_1}.
\begin{proposition}\label{prop density estimates}
    Let $M>0, h>0, s\in(0,1)$ and $m>0$. Let  $E \subset \R^N$ be a bounded, measurable set such that $P^s(E) \le M$ and $|E|\le M$.
   Then, there exists a minimizer $F$ of \eqref{pb_min_1}, which is bounded. 
    Moreover, the following properties hold:
    \begin{itemize}
    \item[i)] There exists $\Lambda=\Lambda(h,N,s)>0$ such that $F$ is a $\Lambda$-minimizer of the perimeter, namely
    \begin{equation*}
        P(F) \le P(F') + \Lambda \lvert F \triangle F' \rvert
    \end{equation*}
    for all measurable set $F' \subset \R^N$ such that $\textnormal{diam}(F\triangle F')\le 1$.
    
    \item[ii)]
    The boundary $\bd F$ is of class $C^{2,\alpha}$ for any $\alpha \in (0,s)$ outside of a closed set $\Sigma$ of Hausdorff dimension at most $N-3$. Moreover, there exists $s_0 \in(0,1)$ such that, if $s \in (s_0,1)$, then $\bd F$ is of class $C^{1,\alpha}$ for any $\alpha \in (0,1)$ outside a closed set $\Sigma$ of Hausdorff dimension at most $N-8$.

    \item[iii)]  There exist $c_0=c_0(N,s)>0$ and a radius $r_0=r_0(h,N,s)>0$ such that for every $x \in \bd F \setminus \Sigma$ and $r \in (0,r_0]$ we have
    \begin{equation*}
        \lvert B_r(x) \cap F \rvert \ge c_0 r^N \quad \text{and} \quad \lvert B_r(x) \setminus F \rvert \ge c_0 r^N.
    \end{equation*}
    
    \item[iv)] The following Euler-Lagrange equation holds: there exists $\lambda \in \R$ such that for all $X \in C^1_c(\R^N,\R^N)$ we have
    \begin{equation}
        \int_{\bd F}\dfrac {\sd_{ E}}h X\cdot \nu_F \ud \mathcal{H}^{N-1}+\int_{\bd  F} H^s_{F} X \cdot \nu_F \ud \mathcal{H}^{N-1}=\lambda\int_{\bd F} X\cdot \nu_F  \ud \mathcal{H}^{N-1},
    \label{euler-lagrange equation}
    \end{equation}
    where $\lambda=\textnormal{sgn}(m-|F|) h^{-\frac s{1+s}}$ if $|F|\neq m$, otherwise it is the Lagrange multiplier associated with the volume penalization.
    
    \item[v)] There exist $k_0=k_0(h,N,s,M,m)\in \N$ and $d_0=d_0(h,N,s,M,m)>0$ such that $F$ is made up of at most $k_0$ connected components having diameter larger than $d_0$.
    
\end{itemize}
    
\end{proposition}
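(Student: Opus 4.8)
The plan is to obtain existence by the direct method of the calculus of variations and then to read off all the remaining properties from the minimality inequality together with the regularity theory for $\Lambda$-minimizers of the fractional perimeter.

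\emph{Existence.} Testing $\mathcal{F}_h(\cdot,E)$ with the competitor $F=E$ gives $\inf\mathcal{F}_h(\cdot,E)\le P^s(E)+h^{-s/(s+1)}\big||E|-m\big|<+\infty$. Let $\{F_j\}$ be a minimizing sequence. Since $\sd_E\ge-\mathrm{diam}(E)$ on $E$ and $\sd_E\ge0$ off $E$, the term $h^{-1}\int_{F_j}\sd_E$ is bounded below, so $P^s(F_j)$ and $|F_j|$ are bounded. The decisive step is tightness: fixing $R_0$ with $E\subset B_{R_0}$, one has $\sd_E(x)\ge|x|-R_0\ge R-R_0$ on $\R^N\setminus B_R$, hence $(R-R_0)\,|F_j\setminus B_R|\le\int_{F_j\setminus B_R}\sd_E\le h\,\mathcal{F}_h(F_j,E)+C(E)\le C$, so that $\sup_j|F_j\setminus B_R|\to0$ as $R\to\infty$. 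By the Compactness Theorem applied on large balls together with a diagonal argument, up to a subsequence $F_j\to F$ in $L^1(\R^N)$ with $|F_j|\to|F|$. Passing to the limit: $P^s$ is lower semicontinuous; $h^{-s/(s+1)}\big||F_j|-m\big|\to h^{-s/(s+1)}\big||F|-m\big|$; and, writing $\sd_E=\sd_E^+-\sd_E^-$, the function $\sd_E^-$ is bounded with bounded support so $\int_{F_j}\sd_E^-\to\int_F\sd_E^-$ by dominated convergence, while $\int_F\sd_E^+\le\liminf_j\int_{F_j}\sd_E^+$ by Fatou. Adding up, $\mathcal{F}_h(F,E)\le\liminf_j\mathcal{F}_h(F_j,E)$, so $F$ is a minimizer.

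\emph{$\Lambda$-minimality (i) and Euler--Lagrange equation (iv).} For a competitor $F'$ with $\mathrm{diam}(F\triangle F')\le1$ we have $\big|h^{-1}\!\int_{F'}\sd_E-h^{-1}\!\int_{F}\sd_E\big|\le h^{-1}\|\sd_E\|_{L^\infty(F\triangle F')}|F\triangle F'|$ and $h^{-s/(s+1)}\big||F'|-m\big|\le h^{-s/(s+1)}\big||F|-m\big|+h^{-s/(s+1)}|F\triangle F'|$; since a minimizer stays in a bounded neighbourhood of $E$ (the excess $h^{-1}\!\int_{F\triangle E}\dist_{\bd E}$ is controlled by the energy), $\|\sd_E\|_{L^\infty(F\triangle F')}$ is bounded, and minimality of $F$ gives $P^s(F)\le P^s(F')+\Lambda|F\triangle F'|$, i.e. $F$ is a $\Lambda$-minimizer of the fractional perimeter; the analogous estimate for the classical perimeter (i) then follows via the comparison between fractional and classical almost-minimizers (equivalently, from the curvature bound furnished below). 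Equation (iv) is obtained by inserting $\Phi(t,x)=x+tX(x)$ into $\mathcal{F}_h$ and differentiating at $t=0$: the first variation of $P^s$ is $\int_{\bd F}H^s_F\,X\cdot\nu_F\,\udH$ (recalled in the Preliminaries), the distance term contributes $h^{-1}\int_{\bd F}\sd_E\,X\cdot\nu_F\,\udH$, and the volume term contributes $\lambda\int_{\bd F}X\cdot\nu_F\,\udH$ with $\lambda=\mathrm{sgn}(m-|F|)h^{-s/(s+1)}$ when $|F|\ne m$ and $\lambda$ the multiplier of the active constraint otherwise.

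\emph{Regularity (ii), density estimates (iii), boundedness, finitely many components (v).} Being a $\Lambda$-minimizer of $P^s$, the set $F$ satisfies the density estimates (iii) at all scales $r\le r_0(h,N,s)$: this is the De Giorgi--type iteration of \cite{CRS}, which uses precisely the $\Lambda$-minimality together with the relative isoperimetric inequality recalled above. These give that $\bd F$ is $C^{1,\alpha}$ outside a closed singular set $\Sigma$ with $\dim_{\mathcal H}\Sigma\le N-3$, improving to $\dim_{\mathcal H}\Sigma\le N-8$ for $s>s_0$ since, for $s$ close to $1$, $s$-minimal boundaries are as regular as classical ones; bootstrapping the Euler--Lagrange identity $H^s_F=\lambda-\sd_E/h$, whose right-hand side is Lipschitz, through Schauder estimates for the fractional mean curvature operator upgrades the regularity to $C^{2,\alpha}$ for every $\alpha<s$, which is (ii). Boundedness of $F$ is then immediate: a maximal $r_0$-separated subset of $\bd F$ produces pairwise disjoint balls $B_{r_0/2}(x_k)$ with $|F\cap B_{r_0/2}(x_k)|\ge c_0(r_0/2)^N$, so $|F|<\infty$ forces the family to be finite and hence $\bd F$—and then $F$, being its Lebesgue representative of finite measure—bounded. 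Finally, taking $d_0$ comparable to $r_0$, any connected component of $F$ of diameter $\ge d_0$ contains, again by (iii), a ball of radius comparable to $r_0$, hence volume at least $\delta=\delta(h,N,s)>0$; since $|F|$ is bounded in terms of $M,m$, there are at most $k_0=k_0(h,N,s,M,m)$ such components.

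\emph{Main obstacle.} The substantive, genuinely nonlocal points are exactly two: the tightness step above—since intersecting with a ball need not decrease $P^s$, loss of mass at infinity must be ruled out purely through the distance penalization—and the fact that the density estimates and the $C^{1,\alpha}$/$C^{2,\alpha}$ regularity with the stated singular-set dimensions rely on the nonlocal regularity machinery of \cite{CRS} (and its refinements) rather than on elementary comparisons; everything else is bookkeeping around the minimality inequality.
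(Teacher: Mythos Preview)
Your argument follows the same route as the paper's proof, which is essentially a list of citations: existence via \cite{CeNo} (i.e.\ the direct method with tightness coming from the distance penalization), $\Lambda$-minimality by comparing energies, regularity and density estimates via \cite{CRS} and \cite{CV2013}, the Euler--Lagrange equation by taking variations, and the bound on connected components by the covering/volume argument of \cite[Proposition~2.3]{MPS}. You have supplied the details the paper omits, and the substance is correct.

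One point deserves flagging. In item~(i) the statement reads $P(F)\le P(F')+\Lambda|F\triangle F'|$ with the classical perimeter, and you try to justify this by passing from fractional to classical almost-minimality ``via the comparison between fractional and classical almost-minimizers (equivalently, from the curvature bound)''. That step is not correct as written: $\Lambda$-minimality of $P^s$ does not in general imply $\Lambda'$-minimality of the classical perimeter, and a curvature bound does not yield it either. In fact the paper's own proof gives $\Lambda=2(h^{-1}+h^{-s/(1+s)})$, which is exactly the constant for $P^s$-minimality, so the ``$P$'' in the statement is almost certainly a typo for ``$P^s$''. Your argument for $P^s$-minimality is fine; simply drop the attempted reduction to the classical perimeter. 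A second, smaller point: the $\Lambda$ you obtain depends on $\|\sd_E\|_{L^\infty}$ over the region where $F\triangle F'$ may lie, hence implicitly on the diameter of $E$ (or of $F$), not solely on $(h,N,s)$ as stated; the paper glosses over this as well, and it is harmless for the applications.
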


\begin{proof}
For the existence of minimizers of \eqref{pb_min_1} see for example \cite[Theorem~1.1]{CeNo}. The $\Lambda-$minimality property is easily deduced, for instance we can choose $\Lambda=2(h^{-1} +h^{-\frac s{1+s}})$. Concerning property $ii)$, it follows from \cite[Theorem 1.1]{CeNo} and \cite[Theorem 5]{CV2013}. The density esimates can be found in \cite[Theorem 4.1]{CRS}.
The Euler-Lagrange equation in item \textit{iv)} is easily deduced performing variations of \eqref{pb_min_1}. The bound on the number of connected components and on the diameter of the components follows from a covering argument, as in \cite[Proposition 2.3]{MPS}.
\end{proof}

By induction we can now define the \emph{discrete-in-time, volume preserving fractional mean curvature flow} $\{E_n\h\}_{n\in \N}$ and we will refer to it as the \textit{discrete flow.}
Let $E_0 \subset \R^N$ be a measurable set such that $\lvert E_0 \rvert = m$, we define $E_1\h$ as a solution of \eqref{pb_min_1} with $E_0$ instead of $E$. Assume that $E_k\h$ is defined for $1 \le k \le n-1$, we define $E_{n}\h$ as a solution of \eqref{pb_min_1} with $E$ replaced by $E_{n-1}\h$.  We  then recall the classical density estimate (see \cite[Theorem~4.1]{CRS}).

\begin{proposition}\label{density_prop}
There exists a constant $C=C(N,s)>0$ with the following property:
given $E \subset \R^N$, $R$, $\mu>0$ and $x_0 \in \bd E$ such that
\begin{equation*}
    P^s(E) \le P^s(E \setminus B_r(x_0))+\mu |E \cap B_r(x_0)| \quad \forall 0<r<R,
\end{equation*}
then 
\begin{equation*}
    C r^N \le |E \cap B_r(x_0)| \quad \forall 0<r<\min \{R, C_{s,N}\mu^{-1/s} \}.
\end{equation*}
\end{proposition}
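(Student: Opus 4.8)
The plan is to run the classical Caffarelli--Roquejoffre--Savin density estimate \cite{CRS}: convert the "sub‑minimality" hypothesis into a (integro-)differential inequality for the volume function $m(r):=|E\cap B_r(x_0)|$, and then integrate it. Normalising $x_0=0$, the function $m$ is absolutely continuous on $(0,R)$ with $m'(r)=\mathcal{H}^{N-1}(E\cap \bd B_r)$ for a.e.\ $r$, and $m(r)>0$ for every $r>0$ because $x_0\in\bd E$. The target is a closed inequality that, after integration with $m(0)=0$, forces $m(r)\ge C r^N$ on the asserted range of radii.

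First I would rewrite the hypothesis geometrically. Testing it with the competitor $E\setminus B_r$ and using $P^s(E)=\Ls(E\cap B_r,E^c)+\Ls(E\setminus B_r,E^c)$ together with $P^s(E\setminus B_r)=\Ls(E\setminus B_r,E^c)+\Ls(E\setminus B_r,E\cap B_r)$, the common term $\Ls(E\setminus B_r,E^c)$ cancels and one is left with $\Ls(E\cap B_r,E^c)-\Ls(E\cap B_r,E\setminus B_r)\le \mu\, m(r)$. Adding $\Ls(E\cap B_r,E\setminus B_r)$ to both sides and recognising that $\Ls(E\cap B_r,E^c)+\Ls(E\cap B_r,E\setminus B_r)=\Ls(E\cap B_r,(E\cap B_r)^c)=P^s(E\cap B_r)$, one obtains for a.e.\ $r\in(0,R)$ the key relation $P^s(E\cap B_r)\le 2\,\Ls(E\cap B_r,E\setminus B_r)+\mu\, m(r)$.

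Next I would bound the two sides. On the left, the (relative, hence scale‑invariant) fractional isoperimetric inequality gives $P^s(E\cap B_r)\ge c_1(N,s)\, m(r)^{\frac{N-s}{N}}$ as long as, say, $m(r)\le \tfrac12\omega_N r^N$ (the opposite case being trivial). On the right, $\Ls(E\cap B_r,E\setminus B_r)\le \Ls(E\cap B_r,B_r^c)$, and using the pointwise estimate $\int_{B_r^c}|x-y|^{-N-s}\,\ud y\le c(N,s)\,(r-|x|)^{-s}$ for $x\in B_r$ together with the coarea formula yields $\Ls(E\cap B_r,B_r^c)\le c_2(N,s)\int_0^r (r-\rho)^{-s} m'(\rho)\,\ud\rho$. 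The volume term is absorbed into the isoperimetric lower bound via $\mu\, m(r)=\mu\, m(r)^{s/N} m(r)^{\frac{N-s}{N}}\le \mu\,\omega_N^{s/N} r^{s}\, m(r)^{\frac{N-s}{N}}\le \tfrac12 c_1\, m(r)^{\frac{N-s}{N}}$ precisely when $r\le C_{s,N}\mu^{-1/s}$ with $C_{s,N}=(c_1/2)^{1/s}\omega_N^{-1/N}$ — this is exactly the threshold in the statement. Thus, for a.e.\ $r<\min\{R,C_{s,N}\mu^{-1/s}\}$, one reaches $m(r)^{\frac{N-s}{N}}\le C(N,s)\int_0^r (r-\rho)^{-s} m'(\rho)\,\ud\rho$, and a standard iteration on dyadic radii (using $m(0)=0$ and $m>0$) upgrades this to $m(r)^{1/N}\ge c(N,s)\, r$, i.e.\ $|E\cap B_r(x_0)|\ge C r^N$.

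The main obstacle is genuinely the nonlocal step: in the local theory the interaction term is replaced by the trace quantity $\mathcal{H}^{N-1}(\partial^*(E\cap B_r)\cap B_r)\le m'(r)$, which is elementary and produces the clean ODE $m^{(N-1)/N}\le C m'$, integrating at once; here the slowly decaying kernel only gives the weighted Volterra integral $\int_0^r (r-\rho)^{-s} m'(\rho)\,\ud\rho$, so the passage from this integro-differential inequality to the pointwise bound $m(r)\gtrsim r^N$ is the delicate point and requires the dyadic iteration rather than a single integration. Everything else — the decomposition of $P^s$, the scaling‑invariant fractional isoperimetric inequality, and the absorption of the volume penalisation — is a routine adaptation of \cite[Theorem~4.1]{CRS}.
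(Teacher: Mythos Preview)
The paper does not supply its own proof of this proposition: it merely recalls the result as \cite[Theorem~4.1]{CRS}, so there is no in-paper argument to compare against. Your sketch is the Caffarelli--Roquejoffre--Savin proof itself, and the steps you outline---the cancellation yielding $P^s(E\cap B_r)\le 2\Ls(E\cap B_r,E\setminus B_r)+\mu\,m(r)$, the fractional isoperimetric lower bound, the coarea/Volterra upper bound on $\Ls(E\cap B_r,B_r^c)$, the absorption of $\mu\,m(r)$ at the threshold $r\le C_{s,N}\mu^{-1/s}$, and the closing dyadic iteration---are all correct and in the right order; your identification of the iteration as the genuinely nonlocal obstacle is accurate.
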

\begin{corollary}
    Let $E \subset \R^N$ be a bounded set of finite fractional perimeter and let $F$ be a minimizer of $\mathcal{F}_h(\cdot, E)$. Then, for every $r \in (0,\gamma h^{1/1+s})$ and for every $x_0 \in \bd^* F,$ it holds
    \begin{equation}
    \label{3.11}
        \min\{|B_r(x_0)\setminus F|, |F \cap B_r(x_0)| \} \ge c r^N
    \end{equation}
    \begin{equation}\label{284}
        c r^{N-s} \le P^s(F,B_r(x_0)) \le C r^{N-s},
    \end{equation}
    where the constants only depend on $N$ and $s$.
\end{corollary}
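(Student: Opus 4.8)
The plan is to derive all four inequalities from the minimality of $F$ for $\mathcal{F}_h(\cdot,E)$ by testing against the competitors $F\setminus B_r(x_0)$ and $F\cup B_r(x_0)$, and then to invoke the density estimate Proposition~\ref{density_prop} together with the relative isoperimetric inequality. Fix $x_0\in\partial^*F$ and $r<R:=\gamma h^{\frac1{1+s}}$. Comparing $\mathcal{F}_h(F,E)$ with $\mathcal{F}_h(F\setminus B_r(x_0),E)$, using $\big||F\setminus B_r(x_0)|-m\big|-\big||F|-m\big|\le|F\cap B_r(x_0)|$ and $-\sd_E\le|\sd_E|$, gives
\[ P^s(F)\le P^s(F\setminus B_r(x_0))+\Big(\tfrac1h\|\sd_E\|_{L^\infty(B_r(x_0))}+h^{-\frac s{1+s}}\Big)|F\cap B_r(x_0)|, \]
and symmetrically, comparing with $F\cup B_r(x_0)$,
\[ P^s(F)\le P^s(F\cup B_r(x_0))+\Big(\tfrac1h\|\sd_E\|_{L^\infty(B_r(x_0))}+h^{-\frac s{1+s}}\Big)|B_r(x_0)\setminus F|. \]
Since $\sd_E$ is $1$-Lipschitz, $\|\sd_E\|_{L^\infty(B_r(x_0))}\le|\sd_E(x_0)|+r$, and — using the Euler--Lagrange equation \eqref{euler-lagrange equation}, the bound $|\lambda|\le h^{-\frac s{1+s}}$, and an a priori bound $\|H^s_F\|_{L^\infty(\partial^*F)}\le C(N,s)h^{-\frac s{1+s}}$ — one has $|\sd_E(x_0)|\le C(N,s)h^{\frac1{1+s}}$. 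Hence, for $r<R$, both brackets are dominated by a single $\mu=\mu(N,s)\,h^{-\frac s{1+s}}$, so $F$ and $F^c$ each satisfy the hypothesis of Proposition~\ref{density_prop} at $x_0$ with constants $\mu$ and $R$ (for $F^c$ one uses $P^s(F^c)=P^s(F)$ and $P^s(F^c\setminus B_r(x_0))=P^s(F\cup B_r(x_0))$, so its quasi-minimality inequality is precisely the second one above).

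Applying Proposition~\ref{density_prop} to $F$ and to $F^c$ then yields $|F\cap B_r(x_0)|\ge c\,r^N$ and $|B_r(x_0)\setminus F|\ge c\,r^N$ for all $r<\min\{R,\,C_{s,N}\mu^{-1/s}\}$; since $\mu^{-1/s}=c(N,s)\,h^{\frac1{1+s}}$, choosing $\gamma=\gamma(N,s)$ small enough makes this interval exactly $(0,\gamma h^{\frac1{1+s}})$, which is \eqref{3.11} with $c=c(N,s)$.

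The lower bound in \eqref{284} now follows from \eqref{3.11} and the relative isoperimetric inequality applied in the Lipschitz domain $B_r(x_0)$:
\[ P^s(F,B_r(x_0))\ge\Ls(F\cap B_r(x_0),F^c\cap B_r(x_0))\ge C\min\{|F\cap B_r(x_0)|,|B_r(x_0)\setminus F|\}^{\frac{N-s}N}\ge c\,r^{N-s}. \]
For the upper bound, use the identity $P^s(F,B_r(x_0))=\big(P^s(F)-P^s(F\setminus B_r(x_0))\big)+\Ls\big(F\setminus B_r(x_0),B_r(x_0)\big)$: by the first clearing estimate the first summand is $\le\mu\,|F\cap B_r(x_0)|\le\mu\,\omega_N r^N\le\mu(N,s)\,\omega_N\gamma^s\,r^{N-s}$, using $r^s\le\gamma^s h^{\frac s{1+s}}$ and $\mu=\mu(N,s)h^{-\frac s{1+s}}$, while the second is $\le\Ls(B_r(x_0),B_r(x_0)^c)=P^s(B_r(x_0))=P^s(B_1)\,r^{N-s}$; summing gives $P^s(F,B_r(x_0))\le C(N,s)r^{N-s}$.

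The main obstacle is the a priori estimate $|\sd_E(x_0)|\le C(N,s)h^{\frac1{1+s}}$ on $\partial^*F$ used above: it is what makes the density estimates hold at the scale $h^{\frac1{1+s}}$ naturally dictated by the volume penalization, rather than only at the much smaller scale coming from the $\Lambda$-minimality in Proposition~\ref{prop density estimates}(i). Proving it requires combining the Euler--Lagrange equation with an $L^\infty$ bound on $H^s_F$ of order $h^{-s/(1+s)}$, which is somewhat delicate — it can be entangled with the very density estimates one is after, and is handled either by a sliding-balls/barrier argument or by a bootstrap initialized at the $\Lambda$-minimality scale — exactly as in the corresponding step of \cite{MPS,DeKu}. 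Everything else is a routine comparison computation.
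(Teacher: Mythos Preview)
Your comparison with $F\setminus B_r(x_0)$ and $F\cup B_r(x_0)$, the invocation of Proposition~\ref{density_prop}, the relative isoperimetric lower bound, and the identity $P^s(F,B_r)=(P^s(F)-P^s(F\setminus B_r))+\Ls(F\setminus B_r,B_r)$ for the upper bound all match the paper's argument. The divergence is exactly where you flag it: controlling $|\sd_E|$ on $B_r(x_0)$.

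Your route through the Euler--Lagrange equation has a genuine gap, not just a delicate point. The bound $|\lambda|\le h^{-s/(1+s)}$ holds only when $|F|\ne m$; if $|F|=m$, $\lambda$ is a true Lagrange multiplier with no a priori control at this stage. And the curvature bound $\|H^s_F\|_{L^\infty}\le C h^{-s/(1+s)}$ you invoke is the kind of information that typically \emph{follows from} the very density estimates you are proving, so the circularity you worry about is real, not hypothetical.

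The paper sidesteps this entirely. The needed bound $|\sd_E|\le C h^{1/(1+s)}$ near $\partial F$ is exactly the content of Proposition~\ref{prop L inf estimate}, whose proof uses \emph{only} Proposition~\ref{density_prop} with $\mu=0$: if $\sd_E(x_0)$ were too large, the signed-distance term would make removing (or adding) a small ball strictly decrease the energy, contradicting minimality. This is precisely the ``sliding-balls/barrier argument'' you list as an alternative --- but it is in fact the direct, self-contained route, requiring neither the Euler--Lagrange equation nor any curvature bound. Although the paper states the Corollary before Proposition~\ref{prop L inf estimate}, the logical dependence runs the other way; there is no circularity because the proof of Proposition~\ref{prop L inf estimate} does not use the Corollary. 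Once you feed that $L^\infty$ estimate into your clearing inequalities, your proof goes through verbatim.
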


\begin{proof}
Since $F$ is a minimizer of $\mathcal{F}_h(\cdot, E)$, for any $x_0 \in \bd F$, it holds that $\mathcal{F}_h(F,E) \le \mathcal{F}_h(F \cup B_r(x_0),E)$, which implies
\begin{align*}
    P^s(F) &\le P^s(F \cup B_r(x_0))+\frac{1}{h}\int_{B_r(x_0)\setminus F} \sd_E \ud x +\frac{1}{h^{s/1+s}} |B_r(x_0) \setminus F|\nonumber\\
    &\le P^s(F \cup B_r(x_0)) +\frac{C}{h^{s/1+s}}|B_r(x_0) \setminus F|.
\end{align*} 
Analogously, one can show that 
\begin{align}\label{292}
    P^s(F) &\le P^s(F\setminus B_r(x_0)) + \frac{C}{h^{s/1+s}}|F \cap B_r(x_0)|\\
    &= \Ls(F\setminus B_r(x_0),F^c\setminus B_r(x_0))+ \Ls(F\setminus B_r(x_0),B_r(x_0)) +
    \frac{C}{h^{s/1+s}}|F \cap B_r(x_0)|\nonumber
\end{align}
Therefore, by Proposition \ref{density_prop}, we deduce 
\begin{equation*}
    \min\left \{|F\cap B_r(x_0)|,|B_r(x_0)\setminus F|\right \} \ge c r^N \qquad \forall 0<r< \gamma h^{1/1+s}.
\end{equation*}
The first inequality in \eqref{284} is now an immediate consequence of the relative isoperimetric inequality. 
To prove the second inequality, by \eqref{292} we get
\begin{align*}
    P^s(F,B_r(x_0))&= \Ls(F\cap B_r(x_0), F^c)+ \Ls(F \setminus B_r(x_0), F^c \cap B_r(x_0)) \\
    &=P^s(F)-\Ls(F\setminus B_r(x_0), F^c\setminus B_r(x_0))\\
    &\le \Ls(F\setminus B_r(x_0), B_r(x_0)) + \frac{C}{h^{s/1+s}}|B_r(x_0) \setminus F|\\
    &\le P^s(B_r(x_0))+ \frac{C\gamma^s}{ r^s} \omega_N r^N \le C(N,s) r^{N-s},
\end{align*}
where we used that $r \le \gamma h^{1/1+s}$.
\end{proof}

We employ the density estimates above to bound the distance function between two consecutive sets of the discrete flow.

\begin{proposition}\label{prop L inf estimate}
    There exists a constant $\gamma=\gamma(N,s)>0$ with the following property. Let $F \subset \R^N$ be a bounded set of finite fractional perimeter and let $E$ be a minimizer of $\mathcal{F}_h(\cdot, F)$, then
    \begin{equation*}
        \sup_{E \triangle F} \dist_{\bd F}\le \gamma h^{1/1+s}.
    \end{equation*}
    \end{proposition}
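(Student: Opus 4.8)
The plan is to argue by contradiction via the density estimates: if some point of $E\triangle F$ were too far from $\bd F$, we could modify $E$ at that point to strictly decrease the functional $\mathcal F_h(\cdot,F)$, contradicting minimality. More precisely, suppose there is $x_0\in E\triangle F$ with $\dist_{\bd F}(x_0)>\gamma h^{1/(1+s)}$ for a $\gamma$ to be chosen. Then the ball $B_r(x_0)$ with $r:=\gamma h^{1/(1+s)}$ lies entirely on one side of $\bd F$: either $B_r(x_0)\subset F$ (if $x_0$ is deep inside $F$, hence $x_0\in E^c$) or $B_r(x_0)\subset F^c$ (if $x_0$ is deep in $F^c$, hence $x_0\in E$). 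I will treat the second case; the first is symmetric.

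In the case $B_r(x_0)\subset F^c$ and $x_0\in E$, since $E$ coincides with its Lebesgue representative and $x_0\in\bd^*E$ (after possibly moving $x_0$ slightly, using that the topological boundary and essential boundary agree up to the singular set, or simply picking $x_0$ a density-one point of $E\cap B_{r/2}(x_0)$ — here one uses the structure from Proposition~\ref{prop density estimates}), I would compare $E$ with the competitor $E\setminus B_\rho(x_0)$ for $\rho\le r/2$. On one hand, by the fractional isoperimetric-type density estimate \eqref{292}–\eqref{3.11} (applied to the minimizer $E$ of $\mathcal F_h(\cdot,F)$), one has $|E\cap B_\rho(x_0)|\ge c\rho^N$ for all $\rho\le r/2$. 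On the other hand, removing $E\cap B_\rho(x_0)$ changes the three terms of $\mathcal F_h(\cdot,F)$ as follows: the fractional perimeter changes by at most $\mathcal L_s(E\cap B_\rho(x_0),E^c\setminus B_\rho(x_0))\le C\rho^{N-s}$ (in fact by the relative isoperimetric inequality this term, which could be negative, is controlled); the distance term decreases by $\frac1h\int_{E\cap B_\rho(x_0)}\sd_F\,\ud x$, and since $B_r(x_0)\subset F^c$ with $\dist_{\bd F}>\gamma h^{1/(1+s)}$ on $B_{r/2}(x_0)$, on this ball $\sd_F(x)=\dist_{F^c}(x)\cdot 0+\dist_F(x)\ge c\,\gamma h^{1/(1+s)}$, wait — more carefully $\sd_F\ge \frac{\gamma}{2}h^{1/(1+s)}$ on $B_{r/2}(x_0)$, so this term gives a gain of at least $\frac{c\gamma}{2}h^{1/(1+s)-1}\rho^N$; and the volume term changes by at most $h^{-s/(1+s)}|E\cap B_\rho(x_0)|\le h^{-s/(1+s)}\omega_N\rho^N$.

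Collecting the estimates, minimality of $E$ forces
\[
0\le \mathcal F_h(E\setminus B_\rho(x_0),F)-\mathcal F_h(E,F)\le C\rho^{N-s}-\Big(\tfrac{c\gamma}{2}h^{-s/(1+s)} + O(h^{-s/(1+s)})\Big)\rho^N,
\]
where I used $h^{1/(1+s)-1}=h^{-s/(1+s)}$. For this to hold for all $\rho\le r/2=\tfrac{\gamma}{2}h^{1/(1+s)}$, choosing $\rho$ of order $h^{1/(1+s)}$ makes $\rho^{N-s}\sim \rho^N h^{-s/(1+s)}\cdot(\text{const})$, and one sees that taking $\gamma=\gamma(N,s)$ large enough the negative term dominates, a contradiction. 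This pins down the admissible $\gamma$. The main obstacle I anticipate is the bookkeeping of signs and of the competitor at the correct scale: one must ensure (i) that $x_0$ can be taken in $\bd^*E$ so that the lower volume density bound $|E\cap B_\rho(x_0)|\ge c\rho^N$ genuinely applies, (ii) that the perimeter term, which is not monotone under set removal, is correctly bounded using \eqref{284}/the relative isoperimetric inequality rather than naively, and (iii) that the exponents $\tfrac{s}{1+s}$ and $\tfrac1{1+s}$ balance exactly as above, which is precisely why the scaling $h^{1/(1+s)}$ appears in the statement. Once these are in place the contradiction is immediate and the symmetric case $B_r(x_0)\subset F$ (comparing with $E\cup B_\rho(x_0)$) is handled identically.
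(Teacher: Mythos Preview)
Your approach is essentially the paper's: argue by contradiction with the competitor $E\setminus B_\rho(x_0)$ (resp.\ $E\cup B_\rho(x_0)$) and balance the perimeter cost $O(\rho^{N-s})$ against the gain from the signed-distance term at scale $\rho\sim h^{1/(1+s)}$. The paper sidesteps your obstacle (i) a bit more cleanly---rather than invoking the corollary's density estimate (which is stated for $x_0\in\bd^*E$ and only up to a fixed multiple of $h^{1/(1+s)}$), it observes that the large-distance assumption upgrades the minimality inequality to $P^s(E)\le P^s(E\setminus B_r(x_0))$, so Proposition~\ref{density_prop} applies with $\mu=0$ (hence no radius cap) at any $x_0\in E$, and no adjustment of $x_0$ is needed.
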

\begin{proof}
Let $\gamma = \max\{3, 2^{s+1/s}P^s(B)^{1/s}/C^{1/s} \}$, where $C=C(N,s)$ is the constant given by the Proposition \ref{density_prop}. Let $c > \gamma$ and $x_0 \in E \triangle F$. Suppose by contradiction that $\dist_{\bd F}(x_0)> c h^{1/1+s}$. Since the other case is analogous, we assume $x_0 \in E \setminus F$. We then have
    \begin{equation}\label{sd_mag}
        \sd_F(x_0) >c h^{1/1+s}
    \end{equation} 
    and thus any ball $B_r(x_0)$ of radius $r \le c h^{1/1+s}/2$ is contained in $F^c$. By the minimality of $E$, we have $\mathcal{F}_h(E,F) \le \mathcal{F}_h(E \setminus B_r(x_0),F)$, therefore
    \begin{equation*}
        P^s(E) \le P^s(E \setminus B_r(x_0)) - \frac{1}{h}\int_{E \cap B_r(x_0)} \sd_F \ud x + \frac{1}{h^{s/1+s}} |E \cap B_r(x_0)|.
    \end{equation*}
    We use \eqref{sd_mag} and $r \le c h^{1/1+s}/2$ to infer that
    \[-\frac{1}{h}\int_{E \cap B_r(x_0)} \sd_F \ud x<-\frac{c }{2h^{s/1+s}}|E \cap B_r(x_0)|. \]
    Then we have
     \begin{equation}\label{3.8}
        P^s(E) \le P^s(E \setminus B_r(x_0))  -\frac{1}{h^{s/1+s}}\left (\frac{c }{2}- 1\right ) |E \cap B_r(x_0)|.
    \end{equation}
    By assumption $c>3$ and we can apply the Proposition \ref{density_prop} with $\mu =0$ and obtain
    \begin{equation}\label{3.9}
        Cr^N \le |E \cap B_r(x_0)| \qquad \forall 0 <r <\frac c2 h^{1/1+s}.
    \end{equation} 
    On the other hand, from \eqref{3.8} we deduce, for every $0<r< c h^{1/1+s}/2$, that
    \begin{equation}\label{3.10}
        \frac{1}{h^{s/1+s}}\left ( \frac{c}{2} -1\right )|E \cap B_r(x_0)|\le P^s(E\setminus B_r(x_0)) -P^s(E) \le P^s(B_r^c)=P^s(B) r^{N-s} 
    \end{equation}
    (where the last inequality follows from the subadditivity of the perimeter on $E$ and $B_r^c$). Combining \eqref{3.9} and \eqref{3.10}, we get that
    \[C r^N \le |E\cap B_r(x_0)| \le P^s(B) \left(\frac c2 -1\right)^{-1}h^{s/1+s}r^{N-s}\le 2P^s(B) h^{s/1+s}r^{N-s} \]
    for all $0<r<ch^{1/1+s}/2$, which gives the desired contradiction to the choice of $c$ as soon as $r\to ch^{1/1+s}/2$.
\end{proof}

We now characterize the stationary sets $E$ for the discrete flow. We say that $E$ is a  \textit{stationary set}  for the discrete flow if it is a fixed set for the functional \eqref{pb_min_1}, that is, 
\[ E=E_n\h\quad \forall n\in\N. \]

\begin{remark}\label{rmk bound}
From the monotonicity of the energy $P_s(\cdot) + h^{-\frac s{1+s}}||\cdot|-m|$ along the discrete flow, one can observe that, for $h=h(m,M)$ small, $|E_n^{(h)}|\in (m/2,3m/2)$ for all $n\in\N$.
\end{remark}

In the following, we will always assume that either:
\begin{itemize}
    \item $N=2$;
    \item $N\le 7$ and $s\in(s_0,1)$, where $s_0$ is the constant of Proposition \ref{prop density estimates}, item \textit{ii)}.
\end{itemize}
This hypothesis is essential for the proof of the following result.

\begin{proposition}   \label{prop 3.1 new}
    Every stationary set $E$ for the discrete flow is a critical set of the $s-$perimeter, that is, a single ball.
\end{proposition}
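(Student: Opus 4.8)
The plan is to show that a stationary set $E$ satisfies the Euler--Lagrange equation \eqref{euler-lagrange equation} with the distance-function term absent, so that $E$ is a critical point of the $s$-perimeter under a volume constraint, and then to invoke the nonlocal Alexandrov theorem (together with the regularity from Proposition \ref{prop density estimates}) to conclude that $E$ is a ball. First I would observe that if $E$ is stationary then $E = E_1^{(h)}$ is a minimizer of $\mathcal F_h(\cdot, E)$ with $E$ itself in the last slot. For such a self-minimizer the signed distance term $\tfrac1h\int_F \sd_E(x)\ud x$ is minimized (among sets near $E$) precisely by $F = E$, since $\sd_E < 0$ inside $E$ and $\sd_E > 0$ outside; more to the point, $\sd_E \equiv 0$ on $\bd E$. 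Hence in the Euler--Lagrange equation \eqref{euler-lagrange equation} the first integral $\int_{\bd F}\tfrac{\sd_E}{h} X\cdot\nu_F\ud\mathcal H^{N-1}$ vanishes identically (the integrand is supported on $\bd E$ where $\sd_E = 0$), leaving
\[
\int_{\bd E} H^s_E \, X\cdot\nu_E \ud\mathcal H^{N-1} = \lambda \int_{\bd E} X\cdot \nu_E \ud\mathcal H^{N-1}
\]
for all $X \in C^1_c(\R^N;\R^N)$, i.e.\ $H^s_E \equiv \lambda$ is constant on $\bd E$: $E$ is a critical set of the fractional perimeter.

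Next I would record that, by Remark \ref{rmk bound}, $|E| \in (m/2, 3m/2)$, so $E$ is a genuine bounded set of finite fractional perimeter, and by Proposition \ref{prop density estimates} items \textit{i)} and \textit{ii)} — using precisely the standing dimensional hypothesis ($N = 2$, or $N \le 7$ and $s \in (s_0,1)$) — the singular set $\Sigma$ is empty, so $\bd E$ is a $C^{1,\alpha}$ (indeed $C^{2,\alpha}$) hypersurface without boundary on which $H^s_E$ is constant. Combined with the density estimates of the same proposition, $E$ is in fact connected: a set with constant fractional mean curvature and several components would, by the nonlocal interaction between components, fail $H^s_E = \mathrm{const}$ (alternatively, one appeals to the classical fact that the only compact embedded constant-$H^s$ hypersurfaces are spheres, which already forces one component). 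From here the classification of critical sets with constant fractional mean curvature gives that $E$ is a ball; since the flow preserves (approximately) the volume $m$, it must be $B^{(m)}$ up to translation.

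The cleanest route to the classification step is to quote the nonlocal Alexandrov-type rigidity: any bounded $C^{1,\alpha}$ set with $H^s_E$ constant on its boundary is a ball. One could alternatively derive it from Theorem \ref{Alex} itself in a neighbourhood argument — if $E$ is $C^1$-close to a ball after rescaling and recentering, then $H^s_{B_f} - \bar H^s_{B_f} = 0$ forces $\|f\|_{H^{(1+s)/2}} = 0$, hence $f \equiv 0$ — but to handle stationary sets that are not a priori close to a ball one needs the global statement; this is exactly the role of the ``moving plane''-type result for the fractional mean curvature (due to Cabré--Fall--Solà-Morales--Weth and Ciraolo--Figalli--Maggi--Novaga), which I would cite.

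The main obstacle I anticipate is not the vanishing of the distance term — that is essentially immediate once one writes the Euler--Lagrange equation on $\bd E$ — but rather justifying that $\Sigma = \emptyset$ and that $E$ has a single connected component, which is where the dimensional restriction and the regularity theory for $\Lambda$-minimizers of the fractional perimeter are genuinely used; without smoothness one cannot even phrase ``$H^s_E$ constant on $\bd E$'' pointwise, and without connectedness the Alexandrov rigidity is false (a union of far-apart equal balls is a critical configuration in the local case). Thus the proof is short modulo these two inputs, both of which are packaged into Proposition \ref{prop density estimates} and the standing hypothesis, and modulo the cited nonlocal Alexandrov theorem.
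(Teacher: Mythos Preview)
Your proposal is correct and follows essentially the same route as the paper: once $E$ is stationary, $\sd_E\equiv 0$ on $\bd E$, the Euler--Lagrange equation \eqref{euler-lagrange equation} reduces to $H^s_E=\lambda$ on $\bd E$, and the nonlocal Alexandrov rigidity (the paper cites \cite{CFMMT} and \cite{CiFiMaNo}, which are precisely the results you name) yields that $E$ is a single ball. Your additional discussion of why the dimensional hypothesis guarantees $\Sigma=\emptyset$, and of why disconnected configurations are excluded in the nonlocal setting, makes explicit what the paper leaves implicit, but the argument is the same.
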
 
\begin{proof}
    It is an immediate consequence of the Euler-Lagrange equation \eqref{euler-lagrange equation}. Since $E$ is a stationary point for the discrete flow, it satisfies
    \begin{equation*}
        \int_{\bd E} H^s_{E} \ud \mathcal{H}^{N-1} =
        \lambda \int_{\bd E} X \cdot \nu_E\ud \mathcal{H}^{N-1}
    \end{equation*}
    for all $X \in C^1_c(\R^N,\R^N)$, i.e. $E$ is a critical point for the $s-$perimeter. Employing \cite[Theorem 1.1]{CFMMT} and \cite[Theorem 1.1]{CiFiMaNo}, we conclude that $E$ is a single ball having fractional mean curvature $H^s_E=\lambda.$
\end{proof}

Before proving the convergence of the flow up to translations, we recall the following lemma from \cite{MPS} that will be used in the proof of the next proposition.

\begin{lemma}
Let $\{E_n\h\}_{n \in \N}$ be a volume preserving discrete flow starting from $E_0$ and let $E_{k_n}\h$ be a subsequence such that $E_{k_n}\h+\tau_n\to F$ in $L^1$ for some set $F$ and a suitable sequence $\{\tau_n\}_{n \in\N}\subset\R^N.$ Then $\dist_{\bd E_{k_n-1}\h}(\cdot+\tau_n)\to \dist_{\bd F}$ uniformly. 
\label{lemma 3.5n}
\end{lemma}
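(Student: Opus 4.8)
The plan is to mimic the proof of the analogous statement in \cite{MPS} (see also \cite{DeKu}), the only change being that the classical density estimates are replaced by their fractional counterparts, namely \eqref{3.11} in the Corollary to Proposition~\ref{density_prop}. Since the time-step $h>0$ is fixed along the flow, all the constants appearing in \eqref{3.11}, in Proposition~\ref{prop L inf estimate} and in the $\Lambda$-minimality property of item \textit{i)} of Proposition~\ref{prop density estimates} are uniform in $n$. The heart of the matter is to show that $E_{k_n-1}\h+\tau_n\to F$ in $L^1$ as well; granted this, the convergence of the distance functions is a soft consequence of the density estimates.

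To prove the $L^1$-convergence of $E_{k_n-1}\h+\tau_n$, I would first record the dissipation inequality obtained by testing the minimality of $E_{n+1}\h$ in \eqref{pb_min_1} against the competitor $E_n\h$: recalling the equivalent form of \eqref{pb_min_1} involving $\frac1h\int_{F\triangle E}\dist_{\bd E}$ and writing $\mathcal E(E):=P^s(E)+h^{-s/(1+s)}\big||E|-m\big|$, one gets
\[ \frac1h\int_{E_{n+1}\h\triangle E_n\h}\dist_{\bd E_n\h}\ud x\le \mathcal E(E_n\h)-\mathcal E(E_{n+1}\h). \]
Since $\mathcal E$ is nonincreasing along the flow (Remark~\ref{rmk bound}) and $\mathcal E(E_0)=P^s(E_0)\le M$, summing over $n$ yields $\sum_n\int_{E_{n+1}\h\triangle E_n\h}\dist_{\bd E_n\h}<\infty$, and in particular $\int_{E_{k_n}\h\triangle E_{k_n-1}\h}\dist_{\bd E_{k_n-1}\h}\to0$. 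Now I would combine this with Proposition~\ref{prop L inf estimate}, which confines $E_{k_n}\h\triangle E_{k_n-1}\h$ to the tube $\{\dist_{\bd E_{k_n-1}\h}\le\gamma h^{1/(1+s)}\}$, and with a standard consequence of the $\Lambda$-minimality of $E_{k_n-1}\h$, namely that its boundary is Ahlfors regular with uniformly controlled $(N-1)$-dimensional measure inside any fixed ball, so that the $\delta$-neighbourhood of $\bd E_{k_n-1}\h$ has measure $\le C\delta$ uniformly in $n$ (here one also uses that, by the density estimates \eqref{3.11} and $E_{k_n}\h+\tau_n\to F$ in $L^1$, the translated boundaries eventually lie in a fixed ball, and that $\bd E_{k_n-1}\h$ is at Hausdorff distance $O(h^{1/(1+s)})$ from $\bd E_{k_n}\h$). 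Splitting $E_{k_n}\h\triangle E_{k_n-1}\h$ into the part where $\dist_{\bd E_{k_n-1}\h}\ge\delta$, of measure $\le\delta^{-1}\int_{E_{k_n}\h\triangle E_{k_n-1}\h}\dist_{\bd E_{k_n-1}\h}\to0$, and the part inside the $\delta$-tube, of measure $\le C\delta$, then letting $n\to\infty$ and $\delta\to0$, we obtain $|E_{k_n}\h\triangle E_{k_n-1}\h|\to0$, hence $E_{k_n-1}\h+\tau_n\to F$ in $L^1$.

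To conclude, I would upgrade this $L^1$-convergence to Hausdorff convergence of the boundaries. Set $G_n:=E_{k_n-1}\h+\tau_n$. The density estimates \eqref{3.11} hold for each $G_n$ with the same constant and pass to the $L^1$-limit, so $F$ satisfies them too; consequently no boundary point can drift: if some $x\in\bd G_n$ satisfied $\dist(x,\bd F)>\varepsilon$, then a fixed-radius ball around $x$ would lie in $F$ or in $F^c$ and the density estimate for $G_n$ at $x$ would force $c r^N\le|G_n\triangle F|\to0$, a contradiction; symmetrically, using the density estimates for $F$, no point of $\bd F$ can remain at distance $\ge\varepsilon$ from $\bd G_n$. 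Hence $d_{\mathcal H}(\bd G_n,\bd F)\to0$, and since $\big\|\dist(\cdot,S_1)-\dist(\cdot,S_2)\big\|_{L^\infty(\R^N)}\le d_{\mathcal H}(S_1,S_2)$ for any nonempty closed sets $S_1,S_2$, we get $\dist_{\bd G_n}\to\dist_{\bd F}$ uniformly on $\R^N$, which is the asserted convergence $\dist_{\bd E_{k_n-1}\h}(\cdot+\tau_n)\to\dist_{\bd F}$.

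The main obstacle is the step $|E_{k_n}\h\triangle E_{k_n-1}\h|\to0$: Proposition~\ref{prop L inf estimate} by itself only gives that $E_{k_n}\h$ and $E_{k_n-1}\h$ are Hausdorff-close of order $h^{1/(1+s)}$, a quantity that does \emph{not} vanish as $n\to\infty$, so it is essential to exploit the summability of the dissipation together with the uniform perimeter / Minkowski-content bound for $\Lambda$-minimizers. Once the $L^1$-convergence of $E_{k_n-1}\h+\tau_n$ is secured, the remaining passage to uniform convergence of distance functions is soft and follows the arguments of \cite{MPS,DeKu} essentially verbatim.
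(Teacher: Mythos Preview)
The paper does not supply a proof of this lemma: it is simply ``recalled from \cite{MPS}'' without argument, so there is nothing to compare against beyond the original \cite{MPS} proof, which is exactly the strategy you outline. Your sketch is correct in structure: summability of the dissipation forces $\int_{E_{k_n}\h\triangle E_{k_n-1}\h}\dist_{\bd E_{k_n-1}\h}\to0$, a tube estimate for $\bd E_{k_n-1}\h$ then yields $|E_{k_n}\h\triangle E_{k_n-1}\h|\to0$, and the uniform two-sided density estimates \eqref{3.11} upgrade $L^1$-convergence to Hausdorff convergence of the boundaries, hence to uniform convergence of the distance functions. One technical adjustment: you appeal to classical $(N-1)$-dimensional Ahlfors regularity to get the tube bound $|\{\dist_{\bd E_{k_n-1}\h}\le\delta\}|\le C\delta$, but the sets here are $\Lambda$-minimizers of the \emph{fractional} perimeter, for which the natural estimate comes instead from \eqref{284}: a Vitali covering of $\bd E_{k_n-1}\h$ by disjoint balls of radius $\delta$ has cardinality at most $CP^s(E_0)\delta^{-(N-s)}$, giving the tube bound $\le C\delta^s$ (this is precisely the computation behind Proposition~\ref{3.2.3}). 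Since only $o(1)$ as $\delta\to0$ is needed, your conclusion stands unchanged.
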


The following result proves the convergence of the discrete flow to a union of disjointed balls, all having the same radius. Moreover, we prove that the flow eventually has fixed volume. A this point, we can not rule out that the flow is converging to different balls (each at infinite distance from the others) and that the translations introduced are different along different subsequences. We will provide a sharper result in the final theorem.

\begin{proposition}\label{prop up to }
Let $m$, $M>0$ and $E_0$ be an initial bounded set with $P^s(E_0)\le M$, $|E_0|=m$. Then, for $h=h(s,M,m)$ small enough and for any discrete flow $E_n\h$ starting from $E_0$ the following hold: 
\begin{itemize}
    \item[i)] for $n$ sufficiently large $|E_{n}\h|=m$;
     \item[ii)] there exists $$ P_\infty^s=\lim_{n\to\infty} P^s(E_n\h); $$
    \item[iii)]  $E_{n}\h$ is made of $K=(P_{\infty}^s/ \omega_N^s)^{\frac{N}{s}} (\omega_N/m)^{\frac{N}{s}-1}$
    distinct connected components $E_{n,i}\h$, and $E_{n,i}\h-\textnormal{bar}(E_{n,i}\h)$ converges in $C^k$, for every $k \in \N$, to the ball centered at the origin and having mass $ m/K$. 
\end{itemize}
\end{proposition}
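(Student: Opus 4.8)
The plan is to follow the scheme of \cite[Section~3]{MPS} and \cite[Section~4]{DeKu}, using the regularity and density results of Section~3 in place of their classical counterparts. \textbf{Step 1 (energy dissipation and $L^1$-relaxation of the increments).} By minimality of $E_{n+1}\h$ for $\Fh(\cdot,E_n\h)$, testing with $F=E_n\h$ and using the identity $\int_{E_{n+1}\h}\sd_{E_n\h}-\int_{E_n\h}\sd_{E_n\h}=\int_{E_{n+1}\h\triangle E_n\h}\dist_{\bd E_n\h}$, one finds that $\mathcal E_n:=P^s(E_n\h)+h^{-\frac{s}{1+s}}\big||E_n\h|-m\big|$ is non-increasing; since $\mathcal E_0=P^s(E_0)\le M$ and $\mathcal E_n\ge0$, it converges to some $\mathcal E_\infty$, whence $P^s(E_n\h)\le M$ for all $n$ and $\sum_n \tfrac1h\int_{E_{n+1}\h\triangle E_n\h}\dist_{\bd E_n\h}<+\infty$. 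A standard argument using the density estimates \eqref{3.11}, \eqref{284} (which, for fixed $h$, bound the number of components of $E_n\h$ and their diameters, hence give a uniform-in-$n$ perimeter bound) together with Proposition~\ref{prop L inf estimate} then yields $|E_{n+1}\h\triangle E_n\h|\to0$ (cf. \cite{MPS}).

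\textbf{Step 2 (subsequential limits are finite unions of equal balls).} Given any subsequence, extract a further one along which the number of connected components equals a fixed $k$ and each component, recentred at its barycenter, converges in $L^1$ to a set $F_i$ with $|F_i|>0$ (the volume lower bound coming again from \eqref{3.11}). By the uniform regularity theory for $\Lambda$-minimizers of $P^s$ \cite{CRS,CV2013} and the standing dimensional hypothesis (so the singular set is empty), this convergence holds in $C^{1,\alpha}$. One then passes to the limit in the Euler--Lagrange equation \eqref{euler-lagrange equation}: the distance term vanishes by Lemma~\ref{lemma 3.5n}, since $\sd_{F_i}\equiv0$ on $\bd F_i$; and the multipliers $\lambda_n$ are bounded (integrate \eqref{euler-lagrange equation} against $X(x)=x-\mathrm{bar}(E_{n,i}\h)$ and use the Pohozaev identity $\int_{\bd E}H^s_E\,(x\cdot\nu)=(N-s)P^s(E)$, the bound $P^s\le M$, Proposition~\ref{prop L inf estimate} and the lower volume bound on components). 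Hence each $F_i$ satisfies $H^s_{F_i}\equiv\lambda_\infty$ for a common constant $\lambda_\infty>0$; by the rigidity results \cite[Theorem~1.1]{CFMMT} and \cite[Theorem~1.1]{CiFiMaNo}, each $F_i$ is a ball, and sharing the curvature they share a radius $\rho$. An elliptic bootstrap upgrades the convergence to $C^k$ for every $k$.

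\textbf{Step 3 (volume healing and conclusion).} Since $k$ balls of radius $\rho$ have total volume in $(m/2,3m/2)$ (Remark~\ref{rmk bound}) and total $s$-perimeter at most $M$ (lower semicontinuity), one gets $k\le K_0(N,s,M,m)$ and $\rho\ge\rho_0(N,s,M,m)$, both independent of $h$, and consequently $\lambda_\infty=H^s_{B_\rho}\le L_0(N,s,M,m)$. Now if $|E_n\h|\neq m$ for infinitely many $n$, Proposition~\ref{prop density estimates}(iv) gives $|\lambda_n|=h^{-\frac{s}{1+s}}$ along those indices, and extracting a convergent sub-subsequence as in Step~2 forces $h^{-\frac{s}{1+s}}=|\lambda_n|\to\lambda_\infty\le L_0$, which is impossible once $h=h(s,M,m)$ is small enough that $h^{-\frac{s}{1+s}}>L_0$; this proves \textit{i)}. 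Then $\mathcal E_n=P^s(E_n\h)$ for $n$ large, so $P^s_\infty:=\mathcal E_\infty=\lim_n P^s(E_n\h)$ exists, which is \textit{ii)}. Finally, the limit $\bigcup_iF_i$ of Step~2 has total volume $\lim|E_n\h|=m$ and, by the $C^1$-convergence of the boundaries and the separation of the $F_i$, total $s$-perimeter $P^s_\infty$; thus $k$ equal balls of radius $\rho$ must satisfy $k\,\omega_N\rho^N=m$ and $k\,\rho^{N-s}P^s(B)=P^s_\infty$, which determine $k=:K$ and $\rho$ uniquely in terms of $m$ and $P^s_\infty$, hence independently of the chosen subsequence. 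Therefore the whole sequence $E_n\h$ is eventually made of exactly $K$ components, each $E_{n,i}\h-\mathrm{bar}(E_{n,i}\h)\to B^{(m/K)}$ in every $C^k$, and solving the two identities for $K$ gives the stated expression, proving \textit{iii)}.

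\textbf{Main obstacle.} The crux is Step~2: one must produce convergence of the recentred components strong enough both to pass to the limit in \eqref{euler-lagrange equation} and to invoke the rigidity classification of sets of constant fractional mean curvature — this is exactly where emptiness of the singular set (Proposition~\ref{prop density estimates}(ii), whence the dimensional restriction) is essential. A secondary delicate point is obtaining, in Step~3, the lower bound on $\rho$ (equivalently the upper bound on $\lambda_\infty$) that is uniform over all small $h$, which is what lets a single smallness threshold $h=h(s,M,m)$ trigger the volume healing.
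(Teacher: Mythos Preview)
Your proof is correct and follows the same overall scheme as the paper. There are two minor tactical differences, both valid: (i) the paper bounds $|\lambda_n|$ more directly, via the pointwise estimate $|\lambda_n|\le h^{-1}\|\sd_{E_{k_n-1}\h}\|_{L^\infty(\bd E_{k_n}\h)}+\|H^s_{E_{k_n}\h}\|_{L^\infty(\bd E_{k_n}\h)}\le c+\Lambda$ coming from the Euler--Lagrange equation and $\Lambda$-minimality, rather than your Pohozaev-type argument; and (ii) the paper first shows, by a minimality-comparison with competitors $G_n^i$, that each subsequential cluster limit $\tilde F^i$ is a \emph{fixed point} of the incremental scheme (hence a ball by Proposition~\ref{prop 3.1 new}), and only afterwards passes to the limit in \eqref{euler-lagrange equation} to equalize the radii, whereas you merge these into a single step. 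The paper is also more explicit about one point you gloss over in Step~2: it groups components into equivalence classes according to whether their centres stay at bounded mutual distance (so each class collapses to a single ball and distinct classes separate to infinity), which is exactly what justifies the ``separation of the $F_i$'' and the identity $P^s_\infty=K\rho^{N-s}P^s(B)$ you invoke in Step~3.
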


\begin{proof}
Let $\{E_{k_n}\h \}_{n \in \N}$ be any given subsequence of $\{E_n\h \}_{n \in \N}$. By Proposition \ref{prop density estimates}, each set $E_{k_n}\h $ is made up of $l_n \le k_0$ connected components having diameter uniformly bounded by $d_0$. Therefore, there exist $l_n$ balls $\{ B_{d_0}(\xi_n^i)\}$, each containing a different component of $E_{k_n}\h$ and such that $E_{k_n}\h \subset \cup_{i=1}^{l_n} B_{d_0}(\xi_n^i)$. Up to subsequences, we can assume that $l_n=\tilde l$, and for all $1\le i<j\le \tilde l$ the following limits exist
\[\limsup_{n \to \infty} |\xi_n^i-\xi_n^j|\eqqcolon d^{i,j}\in [0,+\infty].\]
Now we define the following equivalence classes: we say that $i \equiv j$ if and only if $d^{i,j}<+\infty.$ Denote by $l\le \tilde l$ the number of such equivalence classes, let $j(i)$ be a representative for each class $i \in \{1,\ldots,l\}$, and set $\sigma_n^i\coloneqq \xi^{j(i)}$ for $i=1,\ldots , l.$ We have constructed a subsequence $E_{k_n}\h$ satisfying $E_{k_n}\h \subset \cup_{i=1}^l B_R(\sigma_n^i),$ where $R=d_0 +\max \{d^{i,j}: d^{i,j}<+\infty \}+1,$ and for all $i \not \equiv j$ it holds $|\sigma_n^i -\sigma_n^j| \to +\infty$ as $n \to +\infty$.

Now, fix $1 \le i \le l$, and set
\begin{equation*}
    F_n^i \coloneqq E_{k_n}\h -\sigma_n^i, \quad 
    \tilde F_n^i \coloneqq (E_{k_n}\h -\sigma_n^i) \cap B_R, \quad
    m_n^i \coloneqq |\tilde F_n^i|.
\end{equation*}
Up to a subsequence, we have $m_n^i \to m^i>0$. Moreover, by Lemma \ref{lemma 3.5n} and 
by the compactness of sets of equi-bounded fractional perimeters, there exist $\tilde F^i \Subset B_R$ such that, up to a subsequence,
\begin{equation}\label{3.5}
    \tilde F_n^i \to \tilde F^i \textnormal{ in } L^1, \quad \sd_{E_{k_n -1}\h}(\cdot +\sigma_n^i) \to \sd_{\tilde F^i}(\cdot) \textnormal{ locally uniformly.}
\end{equation}
Let $\tilde G^i$ be any bounded set with $|\tilde G^i|=m_n^i$ and let $\tilde G_n^i \coloneqq \left ( \frac{m_n^i}{m^i} \right )^{\frac{1}{N}} \tilde G^i.$ We set now $G_n^i \coloneqq (F_n^i \setminus \tilde F_n^i) \cup \tilde G_n^i$ so that, for $n$ sufficiently large, $|F_n^i|=|G_n^i|$. By the minimality of $E_{k_n}\h$ we have
\begin{equation*}
    P^s(F_n^i) + \frac{1}{h} \int_{F_n^i} \sd_{E_{k_n -1}\h}(x+\sigma_n^i) \ud x \le P^s(G_n^i) + \frac{1}{h} \int_{G_n^i} \sd_{E_{k_n -1}\h}(x+\sigma_n^i) \ud x.
\end{equation*}
For $n$ sufficiently large, we obtain
\begin{align*}
    &P^s(\tilde F_n^i) + \int_{\tilde F_n^i} \int_{F_n^i \setminus \tilde F_n^i} \frac{1}{|x-y|^{N+s}} \ud x \ud y+ \frac{1}{h} \int_{\tilde F_n^i} \sd_{E_{k_n -1}\h}(x+\sigma_n^i) \ud x \\
    &\le P^s(\tilde G_n^i) + \int_{\tilde G_n^i} \int_{F_n^i \setminus \tilde F_n^i} \frac{1}{|x-y|^{N+s}} \ud x \ud y +\frac{1}{h} \int_{\tilde G_n^i} \sd_{E_{k_n -1}\h}(x+\sigma_n^i) \ud x.
\end{align*}
Passing to the limit as $n \to \infty$, using \eqref{3.5} and the uniform boundedness of $\tilde F_n^i$ and $\tilde G_n^i$, we deduce that
\begin{equation*}
      P^s(\tilde F^i) + \frac{1}{h} \int_{\tilde F^i} \sd_{\tilde F^i}(x) \ud x \le P^s(G^i) + \frac{1}{h} \int_{G^i} \sd_{\tilde F^i}(x) \ud x.
\end{equation*}
This minimality property extends by density to all competitors $G^i$ with finite perimeter and volume $m^i$, so that we deduce that $\tilde F^i$ is a fixed point for the discrete scheme with prescribed volume $m^i$, and, whence by Proposition \ref{prop 3.1 new}, it is a ball. Moreover, since $\tilde F^i$ are uniform $\Lambda-$minimizer by Proposition \ref{prop density estimates}, we also deduce that $\tilde F_n^i$ converge to $\tilde F^i$ in $C^{1,\alpha}$ for every $\alpha \in (0,1).$ In particular, for $n$ large enough, $\tilde F_n^i$ has only one connected component.

We have shown that, for $n$ large enough, $E_{k_n}\h$ is made up by a fixed number $K$ of connected components $E_{k_n}^{(h),i}$, $i=1, \ldots, K$ and $E_{k_n}^{(h),i} -\textnormal{bar}(E_{k_n}^{(h),i}) \to B_{R_i}$ where $|B_{R_i}|=m_i$.
Now, we show that all the radii $R_i$ are equal to $R$. To this aim, we consider the Euler-Lagrange equation \eqref{euler-lagrange equation} 
\begin{equation*}
   \frac{1}{h}  \sd_{E_{k_n-1}\h} + H^s_{ E_{k_n}\h} = \lambda_n \quad \text{on }   \bd E_{k_n}\h .
\end{equation*}
By Proposition \ref{prop L inf estimate}, we deduce that
\begin{equation*}
    |\lambda_n| \le h^{-1} \|\sd_{E_{k_n-1}\h}\|_{L^{\infty}(\bd E_{k_n}\h)} + \|H^s_{ E_{k_n}\h}\|_{L^{\infty}(\bd E_{k_n}\h)}\le c+  \|H^s_{ E_{k_n}\h}\|_{L^{\infty}(\bd E_{k_n}\h)}.
\end{equation*}
To bound the right hand side, we use the $\Lambda-$minimality of $E_{k_n}\h$ to obtain 
\[ \|H^s_{ E_{k_n}\h}\|_{L^{\infty}(\bd E_{k_n}\h)} \le \Lambda.\]
Therefore, by passing to a further subsequence, we can assume $\lambda_n \to \lambda \in \R.$ Arguing as before, we can localize the Euler-Lagrange equation to each single $F_n^i$ and obtain
\begin{equation*}
   \frac{1}{h}  \sd_{E_{k_n-1}\h}(x+\sigma_n^i) +  H^s_{ F_n^i}(x)  = \lambda_n \quad x\in \bd F_n^i.
\end{equation*}
We can pass to the limit as $n \to \infty$ thanks to Lemma \ref{lemma 3.5n} and the continuity property of the fractional mean curvature (see e.g. \cite[Lemma 2.1]{CiFiMaNo}). Thus, taking into account that $\tilde F^i$ is a fixed set for \eqref{pb_min_1}, we deduce that
\begin{equation*}
      H^s_{\tilde F^i}  = \lambda \quad \text{on }\bd \tilde F^i.
\end{equation*}
In particular, this shows that $R_i=c\lambda^{-s},$ for a suitable constant $c$ depending only on $s$ and $N$. 

In order to prove that, eventually, $|E_n\h|=m$, we proceed as follows. Set  $|B_{R_i}|= c_1 \lambda^{-sN}$ and $P^s(B_{R_i})= c_2 \lambda^{-s(N-s)}.$ From Remark \ref{rmk bound}, we take $h=h(s,M)$ small enough such that
\begin{equation*}
    |E_{k_n}\h| \in \left [\frac m2,\frac {3m}2 \right], \quad
    P^s(E_{k_n}\h) \le P^s(E_0)\le M
\end{equation*}
and, for $n$ large enough, this implies
\begin{equation*}
    \sum_{i=1}^K m_n^i \in \left [\frac m2,\frac {3m}2 \right], \quad
    \sum_{i=1}^K P^s(\tilde F_n^i) \le M.
\end{equation*}
Passing to the limit as $n \to \infty$ we obtain
$$K c_1 \lambda^{-sN} \in \left [\frac m2,\frac {3m}2 \right], \quad K c_2 \lambda^{-s(N-s)}\le M,$$
which implies 
\begin{equation}\label{contra}
    \lambda^{s^2}\le \dfrac{2c_1\,M}{m\, c_2}.
\end{equation}
If we suppose that $|E_{k_n}\h|\ne m$ for infinitely many indexes, then $\lambda=\textnormal{sgn}(m-|E_{k_n}\h|)h^{-\frac{s}{1+s}}$ which is a contradiction to \eqref{contra} if $h$ is sufficiently small. We have thus proved item \textit{i)}. Since, for $n$ large enough, $|E_n\h|=m$, the sequence $\{ P^s(E_n\h) \}_{n\in\N}$ is eventually non-increasing, from which item \textit{ii)} follows. Knowing the exact values of the volume and $s-$perimeter of any limit point, we are able to compute $K$ and obtain the convergence in $L^1$ of the whole sequence. Moreover, arguing as in \cite{CeNo} we conclude the convergence in $C^k$ for every $k \in \N$ via a bootstrap method.
\end{proof}

In order to prove the main theorem, we need to recall some results of \cite{MPS}.

\begin{lemma}
Let $\eta>0$. There exists $\delta>0$ with the following property: if $f_1, f_2\in C^1(\bd B)$ with $\|f_i\|_{C^1(\bd B)}\le \delta$ and $|B_{f_i}|=|B|$ for $i=1,2$ we have
\begin{align*}
    C_1(1-\eta)\|f_1-f_2\|_{L^2(B)}^2\le &\mathcal D(B_{f_1},B_{f_2})\le C_1 (1+\eta)\|f_1-f_2\|_{L^2(B)}^2\\[2ex]
    \dfrac{1-\eta }2\int_{\bd B_{f-1}}\sd^2_{ B_{f_2}}\udH\le &\mathcal D(B_{f_1},B_{f_2})\le \dfrac{1+\eta }2\int_{\bd B_{f-1}}\sd^2_{ B_{f_2}}\udH\\[2ex]
    |\textnormal{bar}(B_{f_1})-\textnormal{bar}(B_{f_2})|^2\le& C_2\|f_1-f_2\|^2_{L^2(B)} \le \frac{C_2}{C_1(1-\eta)}\mathcal D(B_{f_1},B_{f_2})
\end{align*}
for suitable constants $C_1,\, C_2>0$.
\end{lemma}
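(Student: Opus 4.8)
The plan is to derive all three chains of inequalities from a single pointwise estimate on the signed distance $\sd_{B_{f_2}}$, and then to expand every quantity in polar coordinates. Writing points of $\R^N\setminus\{0\}$ as $r\omega$ with $r>0$ and $\omega\in\bd B$, the key claim is that there is $C=C(N)>0$ such that, whenever $\|f_2\|_{C^1(\bd B)}\le\delta$,
\[
\big|\,\sd_{B_{f_2}}(r\omega)\,\big|=\big(1+O(\delta)\big)\,\big|r-1-f_2(\omega)\big|\qquad\text{for all }\omega\in\bd B,\ |r-1|\le\delta ,
\]
with the implied error uniform in $\omega,r$. The bound $\dist(r\omega,\bd B_{f_2})\le|r-1-f_2(\omega)|$ is immediate, since $(1+f_2(\omega))\omega\in\bd B_{f_2}$ lies on the half-line $\{t\omega:t>0\}$. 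For the matching lower bound I would take an arbitrary $y=(1+f_2(\omega'))\omega'\in\bd B_{f_2}$ and compute, using $\omega\cdot\omega'=1-\tfrac12|\omega-\omega'|^2$,
\[
|r\omega-y|^2=\big(r-(1+f_2(\omega'))\big)^2+r\,(1+f_2(\omega'))\,|\omega-\omega'|^2 ;
\]
since $r(1+f_2(\omega'))\ge\tfrac12$ and $|f_2(\omega')-f_2(\omega)|\le C\delta|\omega-\omega'|$, this gives $|r\omega-y|^2\ge\big(|r-1-f_2(\omega)|-C\delta|\omega-\omega'|\big)^2+\tfrac12|\omega-\omega'|^2$; minimising the right-hand side over $|\omega-\omega'|\ge0$ yields $|r\omega-y|\ge(1-C\delta)|r-1-f_2(\omega)|$, and taking the infimum over $y$ proves the claim. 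This uses only $C^1$-smallness of $f_2$ and, crucially, avoids any appeal to a regular tubular neighbourhood of $\bd B_{f_2}$.

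Granting the claim, the first chain follows by integrating in polar coordinates. For every $\omega$ the half-line $\{t\omega:t>0\}$ meets $B_{f_1}\triangle B_{f_2}$ precisely for $r$ between $1+\min\{f_1(\omega),f_2(\omega)\}$ and $1+\max\{f_1(\omega),f_2(\omega)\}$, so, recalling $\mathcal D(E,F)=\int_{E\triangle F}\dist_{\bd F}(x)\ud x$,
\[
\mathcal D(B_{f_1},B_{f_2})=\int_{\bd B}\int_{1+\min\{f_1(\omega),f_2(\omega)\}}^{1+\max\{f_1(\omega),f_2(\omega)\}}\dist_{\bd B_{f_2}}(r\omega)\,r^{N-1}\ud r\,\udH(\omega).
\]
Inserting the claim, using $r^{N-1}=1+O(\delta)$, and the elementary identity $\int_{1+\min\{f_1(\omega),f_2(\omega)\}}^{1+\max\{f_1(\omega),f_2(\omega)\}}|r-1-f_2(\omega)|\ud r=\tfrac12(f_1(\omega)-f_2(\omega))^2$, one gets $\mathcal D(B_{f_1},B_{f_2})=\tfrac12\big(1+O(\delta)\big)\|f_1-f_2\|_{L^2(\bd B)}^2$; choosing $\delta$ so small that the $O(\delta)$ error lies below $\eta$ gives the first chain with $C_1=\tfrac12$ (up to the normalisation of $\mathcal D$).

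For the second chain I would parametrise $\bd B_{f_1}$ by $\Phi_1(\omega)=(1+f_1(\omega))\omega$, whose Jacobian $J\Phi_1=(1+f_1)^{N-1}\big(1+(1+f_1)^{-2}|\nabla f_1|^2\big)^{1/2}=1+O(\delta)$ was already recorded in the proof of Theorem~\ref{Alex}; applying the claim with $r=1+f_1(\omega)$ gives $\sd^2_{B_{f_2}}(\Phi_1(\omega))=(1+O(\delta))(f_1(\omega)-f_2(\omega))^2$, whence $\int_{\bd B_{f_1}}\sd^2_{B_{f_2}}\udH=(1+O(\delta))\|f_1-f_2\|_{L^2(\bd B)}^2$, which together with the previous paragraph is the middle chain. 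For the last chain, since $|B_{f_i}|=|B|=\omega_N$ one has $\textnormal{bar}(B_{f_1})-\textnormal{bar}(B_{f_2})=\frac{1}{\omega_N(N+1)}\int_{\bd B}\omega\big[(1+f_1)^{N+1}-(1+f_2)^{N+1}\big]\udH$; Taylor expanding the bracket as $(N+1)(f_1-f_2)(1+O(\delta))$ and applying Cauchy--Schwarz yields $|\textnormal{bar}(B_{f_1})-\textnormal{bar}(B_{f_2})|^2\le C_2\|f_1-f_2\|_{L^2(\bd B)}^2$, and substituting the lower bound $\|f_1-f_2\|_{L^2(\bd B)}^2\le\frac{1}{C_1(1-\eta)}\mathcal D(B_{f_1},B_{f_2})$ from the first chain completes the proof.

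The hard part is the signed-distance claim: because $f_i$ is only $C^1$, the hypersurface $\bd B_{f_2}$ need not be $C^{1,1}$, the nearest-point projection onto it may be multi-valued, and the smooth tubular-neighbourhood identity is unavailable; the ray-projection estimate above substitutes for it. Everything else is routine polar-coordinate bookkeeping and Taylor expansion, with $\delta=\delta(\eta,N)$ fixed only at the end, once all the $O(\delta)$ errors have been collected.
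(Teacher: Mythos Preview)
Your argument is correct. The paper does not actually prove this lemma: it merely \emph{recalls} it from \cite{MPS} (see the sentence ``In order to prove the main theorem, we need to recall some results of \cite{MPS}'' immediately preceding the statement), so there is no in-paper proof to compare against. Your proposal supplies a clean self-contained proof.

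A couple of minor remarks on the write-up. In the lower bound for the key signed-distance claim, the inequality
\[
|r\omega-y|^2\ge\big(|r-1-f_2(\omega)|-C\delta|\omega-\omega'|\big)^2+\tfrac12|\omega-\omega'|^2
\]
is not literally true when $C\delta|\omega-\omega'|>|r-1-f_2(\omega)|$, since then $(r-1-f_2(\omega'))^2$ need not dominate the first square; you should either replace the first term by its positive part, or split into the two cases $C\delta t\le A$ and $C\delta t>A$ (with $A=|r-1-f_2(\omega)|$, $t=|\omega-\omega'|$), in the second of which the $\tfrac12 t^2$ term alone already exceeds $A^2$ for $\delta$ small. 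Either fix leads to the same conclusion $\dist(r\omega,\bd B_{f_2})\ge(1-C\delta)A$. Also, the $L^2(B)$ in the lemma's statement is a typo for $L^2(\bd B)$, which you correctly use throughout.
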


The following lemma  proves the crucial dissipation-dissipation inequality \eqref{dissipation-dissipation inequality}. This result will play a central role in the proof of Theorem \ref{main result}. Its proof is based on the Alexandrov-type estimate contained in Theorem \ref{Alex} and will be omitted as it is the same of \cite[Lemma 3.8]{MPS}.

\begin{lemma}
Let $h>0$. There exist constants $C(h,m,s),$ $\delta>0$  with the following property: given two normal deformations $B^{(m)}_{f_1},$ $B^{(m)}_{f_2}$ with $f_i\in C^2(\bd B^{(m)}),$ $\|f_i\|_{C^1(\bd B^{(m)})}\le \delta$, and such that $|B^{(m)}_{f_2}|=m,$ $\textnormal{bar}( B^{(m)}_{f_2})=0$ and
\begin{equation}
\label{dissipation-dissipation inequality}
    H_{B^{(m)}_{f_2}}+\dfrac {\sd_{B^{(m)}_{f_1}}}h=\lambda \quad \text{on}\quad \bd B^{(m)}_{f_2}
\end{equation}
for some $\lambda\in\R$, we have
\begin{equation*}
    \mathcal D(B^{(m)},B^{(m)}_{f_2})\le C\mathcal D(B^{(m)}_{f_2},B^{(m)}_{f_1}).
\end{equation*}
\end{lemma}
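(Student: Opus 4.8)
The plan is to derive the inequality by chaining several $L^2$-type controls, the only substantial new ingredient being the fractional Alexandrov estimate of Theorem~\ref{Alex}, item~\textit{i)}; everything else is a bookkeeping of norms that reproduces the argument of \cite[Lemma 3.8]{MPS}. Since $h$, $m$ and $s$ are fixed throughout, all constants below may depend on them. I would first fix $\eta=1/2$ in the preceding lemma and choose $\delta>0$ small enough that the $\eta$-losses there, together with the $O(\delta)$ corrections in the Jacobian of the normal parametrization, can all be absorbed with room to spare, uniformly in $f_1,f_2$.

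The heart of the argument is a four-link chain, assembled as follows (writing $H^s$ for the fractional curvature appearing in \eqref{dissipation-dissipation inequality}, consistently with \eqref{euler-lagrange equation}). First, the preceding lemma applied to the pair $(0,f_2)$ gives $\mathcal D(B^{(m)},B^{(m)}_{f_2})\le C\,\|f_2\|_{L^2}^2\le C\,\|f_2\|_{H^{\frac{1+s}{2}}}^2$. Second, Theorem~\ref{Alex}~\textit{i)}, rescaled from $B_1(0)$ to $B^{(m)}$ — the rescaling only changes the constant by a power of the radius $(m/\omega_N)^{1/N}$, by homogeneity of $P^s$, $H^s$ and the Gagliardo seminorm — together with the hypotheses $|B^{(m)}_{f_2}|=m$ and $\textnormal{bar}(B^{(m)}_{f_2})=0$, yields $\|f_2\|_{H^{\frac{1+s}{2}}}^2\le C\,\|H^s_{B^{(m)}_{f_2}}-\bar H^s_{B^{(m)}_{f_2}}\|_{L^2}^2$. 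Third, parametrizing $\bd B^{(m)}_{f_2}$ over $\bd B^{(m)}$ by $\Phi(x)=(1+f_2(x))x$, whose Jacobian is $1+O(\delta)\ge 1/2$, equation \eqref{dissipation-dissipation inequality} reads $\sd_{B^{(m)}_{f_1}}\circ\Phi=h(\lambda-H^s_{B^{(m)}_{f_2}}\circ\Phi)$; since $\bar H^s_{B^{(m)}_{f_2}}$ is, by its very definition, the mean of $x\mapsto H^s_{B^{(m)}_{f_2}}(\Phi(x))$ over $\bd B^{(m)}$ — hence the constant minimizing the $L^2$-distance to that function — one gets $\|H^s_{B^{(m)}_{f_2}}-\bar H^s_{B^{(m)}_{f_2}}\|_{L^2}^2\le h^{-2}\int_{\bd B^{(m)}}(\sd_{B^{(m)}_{f_1}}\circ\Phi)^2\udH$, which the area formula and $J\Phi\ge1/2$ bound by $2h^{-2}\int_{\bd B^{(m)}_{f_2}}\sd_{B^{(m)}_{f_1}}^2\udH$. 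Fourth, the preceding lemma applied to the pair $(f_2,f_1)$ gives $\int_{\bd B^{(m)}_{f_2}}\sd_{B^{(m)}_{f_1}}^2\udH\le C\,\mathcal D(B^{(m)}_{f_2},B^{(m)}_{f_1})$. Concatenating the four links produces $\mathcal D(B^{(m)},B^{(m)}_{f_2})\le C\,\mathcal D(B^{(m)}_{f_2},B^{(m)}_{f_1})$ with $C=C(h,m,s)$, as claimed.

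The one step carrying real mathematical content — and which would be the genuine obstacle were it not already available — is the second, the Alexandrov estimate; once Theorem~\ref{Alex} is in hand the present lemma is essentially mechanical, exactly as in \cite{MPS}. The two points that deserve a moment's care are: checking that the $\bar H^s_{B^{(m)}_{f_2}}$ appearing in the statement is precisely the $L^2$-minimizing constant \emph{after} the pull-back by $\Phi$ — this is what legitimizes the inequality ``$\ge\|H^s_{B^{(m)}_{f_2}}-\bar H^s_{B^{(m)}_{f_2}}\|_{L^2}^2$'' in the third link — and the homogeneity-based rescaling of Theorem~\ref{Alex} to the ball $B^{(m)}$. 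The choice of $\eta$ and $\delta$, the Jacobian expansion $J\Phi=1+O(\delta)$, and the change of variables are all routine.
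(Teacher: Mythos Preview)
Your proposal is correct and follows precisely the route the paper indicates: the paper omits the proof, stating that it is identical to \cite[Lemma~3.8]{MPS} with the classical Alexandrov estimate replaced by Theorem~\ref{Alex}, and your four-link chain (preceding lemma $\to$ Theorem~\ref{Alex}~\textit{i)} $\to$ Euler--Lagrange equation $\to$ preceding lemma) is exactly that argument. Your remarks on the rescaling to $B^{(m)}$, on the meaning of $\bar H^s$ as the $L^2$-optimal constant after pull-back, and on the Jacobian correction are the right points of care.
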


We are finally able to prove Theorem \ref{main result}. We will follow closely the proofs of \cite[Theorem 3.3]{MPS} and \cite[Theorem 1.1]{DeKu}. The main difference is that we use the fractional perimeter framework previously studied instead of the classical one. We present a sketch of the proof.

\begin{proof}
We start by proving the exponential decay of the dissipations, corresponding to Step 1 in \cite[Theorem 3.3]{MPS}. 

From Proposition \ref{prop up to } we know that any limit point of the discrete flow is given by the union of $K$ disjoint balls, all having volume $m/K$. We then use two competitors to obtain a discrete Gronwall-type inequality. Firstly, testing the minimality of $E_n\h$ with $E_{n-1}\h$ and summing from $n$ to infinity, we obtain 
\begin{equation*}
    \sum_{k\ge n-1} \mathcal D(E_k\h, E_{n-1}\h)\le  P(E_n\h)-P_\infty^s = P(E_n\h)-KP^s(B^{(m/K)}).
\end{equation*}
On the other hand, recalling Proposition \ref{prop up to }, the sets $(E_n\h)^i-\text{bar}((E_n\h)^i)=:(E_n\h)^i-\xi_n^i$ are eventually $C^{1,\alpha}-$deformations of $B^{(m/K)}$, having volume $|(E_n\h)^i|=m_n^i$. We consider the admissible competitor for $E_n\h$ given by
\[ \mathcal B_n = \bigcup_{i=1}^K \left( B^{(m_{n-1}^i)}+\xi_{n-1}^i \right). \]
Testing the minimality of $E_n\h$ against $\mathcal B_n$, one can obtain 
\[ P^s(E_n\h)-P^s(\mathcal B_n)\le C\mathcal D(E_{n-1}\h, E_{n-2}\h). \]
Recalling that, if a measurable set $F$ has $L$ disjointed connected components $F^i$, $i=1,\dots, L$, then 
\begin{equation*}
    P^s(F)= \sum_{i=1}^L P^s(F^i) - 2\sum_{i < j} \int_{F^i}\int_{F^j} \frac{1}{|x-y|^{N+s}} \ud x \ud y,
\end{equation*}
by concavity, we estimate
\[ P^s(\mathcal B_n)\le \sum_{i=1}^K P^s(B^{(m^i_{n-1})})\le  KP^s(B^{(m/K)}).  \]
Thus, combining the previous two estimates, we obtain the discrete Gronwall-type estimate
\[  \sum_{k\ge n-1} \mathcal D(E_k\h, E_{n-1} \h)\le C \mathcal D(E_{n-1}\h, E_{n-2}\h). \]
Finally, employing \cite[Lemma 3.10]{MPS} we conclude the exponential convergence of the dissipations
\[ \mathcal D(E_n\h,E_{n-1}\h)\le \left( 1-\frac 1{C+1}  \right)^{\frac n2} (P^s(E_0)- KP^s(B^{(m/K)}) ). \]
From now on, one can follow directly the proof of \cite[Theorem 3.3]{MPS} to conclude that the discrete flow $E_n\h$ is eventually contained in a compact set and converges in $C^k$ to a union of $K$ disjoint balls. Now, from Proposition \ref{prop 3.1 new} we deduce that the limit point is indeed a single ball, having volume equal to $m$, thus reaching the conclusion of the proof. 
\end{proof}

As a corollary of the previous result, we prove the following characterization of the long-time behaviour of the discrete volume-preserving mean curvature flow associated with the classical perimeter. The definition is analogous to the discrete flow associated with the fractional perimeter, but in this case we consider the following minimum problem (cp. the problem \eqref{pb_min_1})
\begin{equation*}
    \min \left \{ P(F)+\frac{1}{h} \int_{F} \sd_E(x) \ud x +\frac{1}{\sqrt h} ||F|-m| : F \subset \R^N\right \}.
\end{equation*}

A careful study of the previous results highlights that  the classical perimeter behaves essentially in the same manner. In particular, Proposition \ref{prop up to } still holds for $s=1$, replacing  \virg{$P^{1}$} with the classical perimeter. Thus, we conclude the following.

\begin{proposition}
    Let $m$, $M>0$ and let $E_0$ be an initial set with $P(E_0)\le M$, $|E_0|=m$. Then, for $h=h(M,m)$ small enough the following holds: for any discrete flow $E_n\h$ starting from $E_0$, there exist $x_i\in\R^N$, $i=1,\dots, K$, where $K=N^{-N}\omega_N m^{1-N} P_\infty^N$ such that 
    \[ E_n\h\to \bigcup_{i=1}^K \left( B^{(m/K)}+x_i\right)\qquad \text{in } C^k \]
    for all $k\in\N.$ Moreover, the convergence is exponentially fast.
\end{proposition}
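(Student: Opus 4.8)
The plan is to run, almost word for word, the argument used to prove Theorem~\ref{main result} (through Proposition~\ref{prop up to } and the dissipation estimates), replacing the fractional perimeter $P^s$ by the classical perimeter $P$ everywhere and the penalization exponent $\frac{s}{1+s}$ by $\frac12$. The quantitative Alexandrov-type estimate that feeds the whole scheme is already available in the classical case: it is precisely item~\textit{iii)} of Theorem~\ref{Alex} (equivalently Theorem~\ref{Alex old}). So no new stability inequality is needed, and essentially one only has to check that all the structural facts of Section~3 survive at $s=1$.

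First I would re-establish, for the classical incremental functional $P(\cdot)+\frac1h\int_{(\cdot)}\sd_E+\frac1{\sqrt h}\big||\cdot|-m\big|$, the analogue of Proposition~\ref{prop density estimates}: existence, boundedness and $\Lambda$-minimality of minimizers, the density estimates, the Euler--Lagrange equation \eqref{euler-lagrange equation} (with $\lambda=\textnormal{sgn}(m-|F|)h^{-1/2}$), and the bound on the number and diameter of the connected components; these are classical and are exactly the statements used in \cite{MPS}, the interior regularity now coming from De Giorgi's theory (so $\partial F\in C^{2,\alpha}$ outside a closed singular set of Hausdorff dimension at most $N-8$, empty for $N\le 7$, which is where the dimensional restriction enters). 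The $L^\infty$ bound $\sup_{E\triangle F}\dist_{\bd F}\le\gamma\sqrt h$, the analogue of Proposition~\ref{prop L inf estimate}, is proved by the same ball-competitor argument. I would then reprove the analogue of Proposition~\ref{prop up to }: for $h$ small, along subsequences the classical discrete flow converges, after translating each component, to a finite union of pairwise disjoint balls of equal volume, $|E_n\h|=m$ eventually, and $P_\infty:=\lim_n P(E_n\h)$ exists. The proof is the one of Proposition~\ref{prop up to }: a covering/compactness argument isolates the component limits; localizing the minimality and \eqref{euler-lagrange equation} to each component shows each limiting component is a fixed point of the classical incremental scheme with a prescribed mass, hence, by Alexandrov's theorem for the classical mean curvature (the role played by Proposition~\ref{prop 3.1 new}), a ball, and all these balls share the same mean curvature $\lambda$, hence the same radius; $\Lambda$-minimality upgrades the convergence to $C^{1,\alpha}$; finally the contradiction argument based on Remark~\ref{rmk bound} and the scaling relation \eqref{contra} gives $|E_n\h|=m$ for $n$ large, whence $P(E_n\h)$ is eventually non-increasing.

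Next I would reproduce Step~1 of \cite[Theorem~3.3]{MPS}: testing the minimality of $E_n\h$ against $E_{n-1}\h$ and against the competitor $\mathcal B_n=\bigcup_{i=1}^K\big(B^{(m_{n-1}^i)}+\xi_{n-1}^i\big)$, and using the dissipation--dissipation inequality (the analogue of the lemma containing \eqref{dissipation-dissipation inequality}, whose proof invokes item~\textit{iii)} of Theorem~\ref{Alex}), one obtains the discrete Gronwall inequality $\sum_{k\ge n-1}\mathcal D(E_k\h,E_{n-1}\h)\le C\,\mathcal D(E_{n-1}\h,E_{n-2}\h)$; here $P(\mathcal B_n)=\sum_i P(B^{(m_{n-1}^i)})\le KP(B^{(m/K)})$ simply because the classical perimeter is additive over disjoint components (no interaction term). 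Applying \cite[Lemma~3.10]{MPS} gives exponential decay of $\mathcal D(E_n\h,E_{n-1}\h)$, and then, following \cite[Theorem~3.3]{MPS} and \cite[Theorem~1.1]{DeKu}, the flow is eventually confined to a fixed compact set, the component barycenters stabilize to points $x_i\in\R^N$ (because the dissipations are summable), and $E_n\h\to\bigcup_{i=1}^K\big(B^{(m/K)}+x_i\big)$ in every $C^k$, with the convergence exponentially fast; a bootstrap as in \cite{CeNo} promotes the convergence to all $C^k$. The number $K$ is then forced by the relation $P_\infty=K\,P(B^{(m/K)})$ between the two conserved quantities, which yields the stated value of $K$.

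The main obstacle is not a technical one but a conceptual divergence from the fractional proof, located in this last step. In the fractional case Proposition~\ref{prop 3.1 new} forbids disconnected stationary configurations at finite mutual distance, forcing $K=1$; for the classical perimeter, however, a union of $K$ equal balls \emph{is} a fixed point of the incremental scheme (its classical mean curvature is constant on the whole boundary, and there is no nonlocal interaction to break this), so that reduction is unavailable and the statement must retain the union of $K$ balls. Everything else is a routine, if lengthy, transcription at $s=1$ of the estimates established above; the one point demanding a little care is verifying that the proofs of Propositions~\ref{prop density estimates}, \ref{prop L inf estimate} and the fixed-point characterization indeed hold verbatim for the classical functional, and tracking that the argument that collapsed $K$ to $1$ is the unique step that has to be omitted.
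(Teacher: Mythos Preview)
Your proposal is correct and follows essentially the same approach as the paper: transcribe the fractional argument to $s=1$, noting that the unique divergence is that Proposition~\ref{prop 3.1 new} no longer collapses the limit to a single ball, so the statement must retain the union of $K$ equal balls (the paper cites \cite[Proposition~3.1]{MPS} for this). Your identification of this as the ``main obstacle'' and the only step to be omitted matches the paper's one-line proof exactly.
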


The proof is essentially the same, the major difference being that the limit point of the flow now are union of disjoint balls (see \cite[Proposition 3.1]{MPS} for details) and not a single ball.

\appendix
\section{Existence of flat flows}
In order to prove the existence of flat flows, we follow the lines of \cite[Section 3]{MSS}. In this appendix we drop the assumption on the dimension and work in $\R^N$, $N\ge 2$. We start by remarking that the minimality of the sets $E_n\h$ implies
\begin{align}
\label{3.15}
   & P^s(E_t^{(h)}) +\frac{1}{h} \int_{E_t^{(h)} \triangle E_{t-h}^{(h)}} \dist_{\bd E_{t-h}^{(h)}}(x) \ud x + \frac{1}{h^{s/1+s}} ||E_{t}^{(h)}|-m| \nonumber\\
    &\le P^s(E_{t-h}^{(h)}) +  \frac{1}{h^{s/1+s}} ||E_{t-h}^{(h)}|-m| \qquad \forall t \in [h,+\infty)
\end{align}
and, by iterating \eqref{3.15} and using $|E_0|=m$, we get
\begin{align}
    P^s(E_t^{(h)}) \le P^s(E_0), \label{dis_per}\\[1ex]
    \frac{1}{h^{s/1+s}} ||E_{t}^{(h)}|-m| \le P^s(E_0),  \label{dis_pen} \\[1ex]
     \int_h^T \int_{E_{t}^{(h)} \triangle E_{t-h}^{(h)}} \frac{\dist_{\bd E_{t-h}^{(h)}}(x)}{h} \ud x \le P^s(E_0), \nonumber
\end{align}
for all $t\ge 0$ and for every $T>h$. We are then able to bound the $L^1-$distance between two consecutive sets of the discrete flow.
\begin{proposition}
\label{3.2.3}
Let $t >0$. Then
    \begin{equation*}
        |E_t^{(h)} \triangle E_{t-h}^{(h)}| \le C \left ( l^s P^s(E_{t-h}^{(h)})+ \frac{1}{l} \int_{E_{t}^{(h)} \triangle E_{t-h}^{(h)}} \dist_{\bd E_{t-h}^{(h)}}(x) \ud x  \right ) \quad \forall l \le \gamma h^{1/1+s}.
    \end{equation*}
\end{proposition}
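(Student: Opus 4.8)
The plan is to split the symmetric difference $\Delta:=E_t^{(h)}\triangle E_{t-h}^{(h)}$ according to the value of $\dist_{\bd E_{t-h}^{(h)}}$ and to treat the two pieces by entirely different means: a trivial Chebyshev bound for the part far from $\bd E_{t-h}^{(h)}$, and a covering argument based on the uniform density estimates for the part close to it. Throughout, write $E:=E_{t-h}^{(h)}$ and $F:=E_t^{(h)}$; recall that $F$ minimizes $\mathcal F_h(\cdot,E)$ and that $E$ is itself a minimizer of the incremental problem at the previous step, so that the density estimates of the Corollary following Proposition \ref{density_prop} apply to $E$: in particular \eqref{284} holds for $E$, and by \eqref{3.11} its topological boundary $\bd E$ coincides with $\overline{\bd^* E}$. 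Fix $l\le\gamma h^{1/1+s}$ and set
\[
A_l:=\{x\in\Delta:\dist_{\bd E}(x)\ge l\},\qquad B_l:=\{x\in\Delta:\dist_{\bd E}(x)<l\},
\]
so that $|\Delta|=|A_l|+|B_l|$.

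For the far part, Chebyshev's inequality gives immediately
\[
l\,|A_l|\ \le\ \int_{A_l}\dist_{\bd E}(x)\ud x\ \le\ \int_{\Delta}\dist_{\bd E}(x)\ud x ,
\]
which accounts for the second term on the right-hand side of the claim.

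For the near part I would proceed by covering. Since $\bd E=\overline{\bd^* E}$, every point of $B_l$ lies within distance $l$ of $\bd^* E$. Choose a maximal $l$-separated family $\{x_i\}_{i\in I}\subset\bd^* E$ (finite, as $E$ is bounded): the balls $B_{l/2}(x_i)$ are pairwise disjoint, and by maximality $\bd^* E\subset\bigcup_{i\in I}B_l(x_i)$, hence $B_l\subset\bigcup_{i\in I}B_{2l}(x_i)$ and $|B_l|\le\omega_N(2l)^N\,\#I$. To count $I$, apply the lower density estimate \eqref{284} to $E$ at the admissible scale $l/2$, which yields $P^s(E;B_{l/2}(x_i))\ge c\,(l/2)^{N-s}$ for every $i$; combined with the super-additivity of the relative fractional perimeter over the pairwise disjoint balls $B_{l/2}(x_i)$, i.e. $\sum_{i\in I}P^s(E;B_{l/2}(x_i))\le 3P^s(E)$ (an elementary consequence of the definition of $P^s(\cdot;\Omega)$, since for disjoint $\Omega_i$ each of the three integral contributions sums to at most $P^s(E)$), this gives $\#I\le C(N,s)\,P^s(E)\,l^{-(N-s)}$, and therefore $|B_l|\le C(N,s)\,l^s\,P^s(E)$.

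Adding the two bounds yields the assertion. The only substantive point is the estimate for the near part $B_l$: this is precisely where the minimality of $E_{t-h}^{(h)}$ is genuinely used, through the uniform lower perimeter density \eqref{284}, a property which fails for an arbitrary set of finite fractional perimeter; the super-additivity of $P^s(\cdot;\Omega)$ over disjoint regions is the elementary ingredient that makes the covering count close. (For the very first step $t=h$, where $E_{t-h}^{(h)}=E_0$ need not enjoy density estimates a priori, one argues separately, using $P^s(E_t^{(h)})\le P^s(E_0)$ from \eqref{dis_per}.)
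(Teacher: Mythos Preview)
Your proof is correct and follows the same strategy as the paper's: split the symmetric difference into the parts near and far from $\bd E_{t-h}^{(h)}$, use Chebyshev for the far part, and handle the near part by a Vitali-type covering of $\bd E_{t-h}^{(h)}$ combined with the density estimates \eqref{3.11}/\eqref{284}. The only difference is cosmetic bookkeeping in the covering step---you count balls via the lower perimeter density in \eqref{284} and the super-additivity $\sum_i P^s(E;B_{l/2}(x_i))\le 3P^s(E)$, whereas the paper bounds each $|B_{3l}(x_i)|$ through the volume density \eqref{3.11}, the relative isoperimetric inequality, and the upper bound in \eqref{284}, and then sums the disjoint contributions $\Ls(E\cap B_l(x_i),E^c)\le P^s(E)$.
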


\begin{proof}
In order to estimate $|E_t^{(h)} \triangle E_{t-h}^{(h)}|$, we split it into two parts:
\begin{equation*}
    |E_t^{(h)} \triangle E_{t-h}^{(h)}| \le |\{ x \in E_t^{(h)} \triangle E_{t-h}^{(h)}: \dist_{\bd F}(x) \le l\}|+ |\{x \in E_t^{(h)} \triangle E_{t-h}^{(h)}: \dist_{\bd E_{t-h}^{(h)}}(x) \ge l\}|.
\end{equation*}
    The second term is estimated by
\[ |\{ x \in E_t^{(h)} \triangle E_{t-h}^{(h)}: \dist_{\bd E_{t-h}^{(h)}}(x) \ge l\}| \le \frac{1}{l} \int_{E_t^{(h)} \triangle E_{t-h}^{(h)}} \dist_{\bd E_{t-h}^{(h)}}(x) \ud x.\]
To estimate the first term, we use a covering argument to find a collection of disjoint balls $\{ B_l(x_i)\}_{i \in I}$ with $x_i \in \bd E_{t-h}^{(h)}$ and $I \subset \N$ a finite set such that $\bd E_{t-h}^{(h)} \subset \cup_{i \in I} B_{2l}(x_i)$. Observe that, by \eqref{3.11}, \eqref{284} and the relative isoperimetric inequality, for every $i \in I$, we get
\begin{align*}
    |B_{3l}(x_i)| &\le C \min\{|E_{t-h}^{(h)} \cap B_l(x_i)|, |B_l(x_i) \setminus E_{t-h}^{(h)}| \} \\
    &\le C \Ls(E_{t-h}^{(h)}\cap B_l(x_i), B_l(x_i) \setminus E_{t-h}^{(h)} )^{N/N-s} \\
    &\le C l^s\,\Ls(E_{t-h}^{(h)} \cap B_l(x_i), B_l(x_i) \setminus E_{t-h}^{(h)})\\
    &\le C l^s\,\Ls(E_{t-h}^{(h)} \cap B_l(x_i), \R^N \setminus E_{t-h}^{(h)}).
\end{align*}
Since the set $\{x \in E_t^{(h)} \triangle E_{t-h}^{(h)}: \dist_{\bd E_{t-h}^{(h)}}(x) \le l\}$ is covered by $\{B_{3l}(x_i)\}_{i \in I}$, by summing over $i$ and by the choice of the balls $\{ B_l(x_i)\}_{i \in I}$, we obtain
\begin{align*}
    |\{ x \in E_t^{(h)} \triangle E_{t-h}^{(h)}: \dist_{\bd E_{t-h}^{(h)}}(x) \le l\}| &\le \sum_{i\in I} |B_{3l}(x_i)|\\
    &\le C l^s \sum_{i \in I} \Ls(E_{t-h}^{(h)} \cap B_l(x_i), \R^N \setminus E_{t-h}^{(h)})\\
    &\le  C l^s P^s(E_{t-h}^{(h)}).
\end{align*}
\end{proof}

\begin{proposition}
Let $h \le 1$, then it holds
\begin{equation}\label{holder estimate}
    |E_{t_1}^{(h)} \triangle E_{t_2}^{(h)}| \le C |t_1-t_2|^{s/s+1} \quad \forall 0\le t_1 \le t_2 <+\infty.
\end{equation}
\end{proposition}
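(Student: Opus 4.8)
The strategy is to reduce the Hölder-in-time estimate \eqref{holder estimate} to the one-step estimate of Proposition \ref{3.2.3}, exploiting the two a priori bounds \eqref{dis_per} and the dissipation bound controlling $\sum_t \tfrac1h\int_{E_t^{(h)}\triangle E_{t-h}^{(h)}}\dist_{\bd E_{t-h}^{(h)}}$. First I would consider the case when $t_1,t_2$ are both integer multiples of the time step, say $t_j = n_j h$ with $n_1\le n_2$. By the triangle inequality for symmetric differences,
\[
|E_{t_1}^{(h)}\triangle E_{t_2}^{(h)}| \le \sum_{k=n_1+1}^{n_2} |E_{kh}^{(h)}\triangle E_{(k-1)h}^{(h)}|.
\]
Now apply Proposition \ref{3.2.3} to each summand with the choice $l = \gamma h^{1/(1+s)}$ (the largest admissible value). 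Using $P^s(E_{(k-1)h}^{(h)}) \le P^s(E_0) \le M$ from \eqref{dis_per}, the first term contributes $C l^s P^s(E_0) \le C M h^{s/(1+s)}$ per step; the second contributes $\tfrac{C}{l}\int_{E_{kh}^{(h)}\triangle E_{(k-1)h}^{(h)}}\dist_{\bd E_{(k-1)h}^{(h)}} = C\gamma^{-1} h^{-1/(1+s)}\int\dist$, and here one uses $h^{-1/(1+s)} = h^{s/(1+s)}\cdot h^{-1}$ to rewrite this as $C h^{s/(1+s)}\cdot \tfrac1h\int\dist$. Summing over $k$ from $n_1+1$ to $n_2$ gives
\[
|E_{t_1}^{(h)}\triangle E_{t_2}^{(h)}| \le C h^{s/(1+s)}\Big( (n_2-n_1) M + \sum_{k=n_1+1}^{n_2}\tfrac1h\int_{E_{kh}^{(h)}\triangle E_{(k-1)h}^{(h)}}\dist_{\bd E_{(k-1)h}^{(h)}}\Big).
\]
The second sum is bounded by $P^s(E_0)\le M$ uniformly by the third displayed inequality after \eqref{dis_pen} (taking $T = t_2$), so it is $\le C M h^{s/(1+s)} \le C M |t_2-t_1|^{s/(1+s)}$ as soon as $h \le |t_2-t_1|$, which certainly holds since $t_2 - t_1 = (n_2-n_1)h \ge h$ when $n_1 < n_2$. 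The first sum is $C M (n_2-n_1) h^{s/(1+s)} = C M (n_2-n_1)^{1/(1+s)}\big((n_2-n_1)h\big)^{s/(1+s)}$, and since $(n_2-n_1)^{1/(1+s)} \le (n_2 - n_1)$ this is bounded by $C M (n_2-n_1)^{1}|t_2-t_1|^{s/(1+s)}$ — which is \emph{not} uniform. So the naive bound on the first term is too weak, and this is the main obstacle.

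The way around it is to \emph{balance}: instead of fixing $l=\gamma h^{1/(1+s)}$, keep $l$ free (subject to $l\le \gamma h^{1/(1+s)}$) and optimize after summing. Summing Proposition \ref{3.2.3} over the $n_2-n_1$ steps gives
\[
|E_{t_1}^{(h)}\triangle E_{t_2}^{(h)}| \le C\Big( (n_2-n_1)\,l^s\,M + \tfrac{1}{l}\sum_{k}\int_{E_{kh}^{(h)}\triangle E_{(k-1)h}^{(h)}}\dist_{\bd E_{(k-1)h}^{(h)}}\Big) \le C\big( (n_2-n_1)\,l^s\,M + \tfrac{h}{l}\,M\big),
\]
using the dissipation bound $\sum_k \tfrac1h\int\dist \le P^s(E_0) \le M$. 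Writing $\tau := t_2-t_1 = (n_2-n_1)h$, the bracket is $C M(\tau l^s/h + h/l)$. Optimizing over $l>0$ gives $l \sim (h^2/\tau)^{1/(1+s)}$ and the value $C M \tau^{1/(1+s)} h^{(s-1)/(1+s)}$... one should double-check the exponents here, but the point is that the optimal $l$ must also respect $l\le \gamma h^{1/(1+s)}$, i.e. $(h^2/\tau)^{1/(1+s)}\lesssim h^{1/(1+s)}$, equivalently $h\lesssim\tau$, which holds in the integer-multiple case. Carrying out the optimization carefully (I expect the clean choice to be $l = \min\{\gamma h^{1/(1+s)},\ c\,\tau^{1/(1+s)}\}$ or similar, with the regime $\tau \ge h$ making the first term the binding constraint) yields exactly $|E_{t_1}^{(h)}\triangle E_{t_2}^{(h)}| \le C\tau^{s/(1+s)}$ with $C = C(N,s,M)$.

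Finally, for general $t_1 \le t_2$ one passes to the nearest grid points: since $E_t^{(h)} = E_{[t/h]h}^{(h)}$ is piecewise constant in $t$, we have $E_{t_j}^{(h)} = E_{\lfloor t_j/h\rfloor h}^{(h)}$, and $\big|\lfloor t_1/h\rfloor - \lfloor t_2/h\rfloor\big| h \le t_2-t_1+h$; if $t_2 - t_1 \ge h$ this is $\le 2(t_2-t_1)$ and the grid estimate applies directly, while if $t_2-t_1 < h$ then either the two points lie in the same grid interval (so the symmetric difference is empty and there is nothing to prove) or they straddle one grid point, in which case $|E_{t_1}^{(h)}\triangle E_{t_2}^{(h)}|$ equals a single one-step symmetric difference, bounded by $C h^{s/(1+s)} \le C\,\big(\text{something}\big)$ — but here $t_2-t_1$ could be much smaller than $h$, so one genuinely needs $h\le 1$ and a separate bound: a single step satisfies $|E_{kh}^{(h)}\triangle E_{(k-1)h}^{(h)}| \le C h^{s/(1+s)}$, which gives the Hölder bound only on the scale $|t_2-t_1|\sim h$. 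For $|t_2-t_1| \ll h$ with the two points straddling a jump, the estimate \eqref{holder estimate} as stated would be false unless the constant absorbs it; I suspect the intended reading is that \eqref{holder estimate} is understood for the piecewise-constant-in-$h$ flow where the relevant scale is $\ge h$, or that one uses $h \le 1$ to write $h^{s/(1+s)} = h^{-1/(1+s)}\cdot h \le h^{-1/(1+s)}(t_2-t_1)$... This boundary subtlety at scales below $h$ is exactly where I would be most careful; the clean statement is really for $t_2 - t_1 \ge h$, and the general case follows by the piecewise-constant structure together with $h\le 1$.
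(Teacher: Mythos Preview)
Your overall strategy coincides with the paper's: reduce to the one-step bound of Proposition~\ref{3.2.3}, sum over the intermediate grid points via the triangle inequality, and control the two resulting sums using the perimeter bound \eqref{dis_per} and the telescoping dissipation estimate obtained from \eqref{3.15}. The paper likewise restricts to $t_2-t_1\ge h$ at the outset, and your remarks about the sub-$h$ regime are accurate (and harmless for the application to flat flows, where $h\to 0$).

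The substantive gap is precisely where you flag uncertainty. After summing Proposition~\ref{3.2.3} over the $k\sim\tau/h$ steps (with $\tau=t_2-t_1$) and using the telescoped \eqref{3.15} to get $\sum_k\int\dist\le Ch\,P^s(E_0)$, one must bound
\[
C\Big(\tfrac{\tau}{h}\,l^s + \tfrac{h}{l}\Big)P^s(E_0).
\]
Your optimal choice $l\sim(h^2/\tau)^{1/(1+s)}$ yields the value $C\,\tau^{1/(1+s)}h^{(s-1)/(1+s)}$, and for $s<1$ with $h\le\tau$ this is \emph{not} dominated by $C\tau^{s/(1+s)}$ uniformly in $h$, since
\[
\tau^{1/(1+s)}h^{(s-1)/(1+s)}=\tau^{s/(1+s)}\big(\tau/h\big)^{(1-s)/(1+s)}.
\]
In fact no choice of $l$ can make both terms $\le C\tau^{s/(1+s)}$ simultaneously: combining $h/l\le C\tau^{s/(1+s)}$ with $(\tau/h)l^s\le C\tau^{s/(1+s)}$ forces $(\tau/h)^{1-s}\le C$, which is again not uniform. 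The paper's explicit choice $l=\gamma h/\tau^{s/(1+s)}$ makes the dissipation sum come out as $C\tau^{s/(1+s)}$ but hits the same obstruction on the perimeter sum; its closing inequality ``$kh^s\le 2\tau h^{s-1}\le 2\tau^s$ since $h\le\tau$'' actually requires $h\ge\tau$, the opposite of the working hypothesis. So the step you hedge on (``carrying out the optimization carefully \dots yields exactly $\tau^{s/(1+s)}$'') is not merely unchecked arithmetic: within this framework the exponent $s/(1+s)$ is not attained uniformly in $h$ when $s<1$, and neither your outline nor the paper's written proof closes this gap.
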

\begin{proof}
It is enough to consider the case $t_2-t_1 \ge h$. Let $j \in \N$ and $k \in \N \setminus\{0\}$ be such that $t_1 \in [j h,(j+1)h)$ and $t_2 \in [(j+k) h, (j+k+1)h).$ Then, we can use Proposition~\ref{3.2.3} with $l=\gamma h/|t_1-t_2|^{s/s+1}$ (note that $l \le \gamma h^{1/s+1}$ by the assumption $t_2-t_1 \ge h$) and estimate in the following way
\begin{align*}
    |E_{t_1}\h \triangle E_{t_2}\h| &\le \sum_{i=1}^k |E_{(j+i)h}\h \triangle E_{(j+i-1)h}\h|\\
    &\le C \sum_{i=1}^k \frac{h^s}{|t_1-t_2|^{s^2/s+1}} P^s(E_{(j+i-1)h}\h)\\
    & \ +C \sum_{i=1}^k \frac{|t_1-t_2|^{s/s+1}}{h} \int_{E_{(j+i)h}\h \triangle E_{(j+i-1)h}}\h |\sd_{E_{(j+i-1)h}\h}| \ud x.
\end{align*}
By using \eqref{3.15} we estimate the sum above by 
\begin{align*}
    |E_{t_1}\h \triangle E_{t_2}\h| &\le C \sum_{i=1}^k \frac{h^s}{|t_1-t_2|^{s^2/s+1}} P(E_0)\\
    & \ + C \sum_{i=1}^k |t_1-t_2|^{s/s+1} (P(E_{(j+i-1)h}\h)-P(E_{(j+i)h}\h))\\
    & \ + C \sum_{i=1}^k \frac{|t_1-t_2|^{s/s+1}}{h^{s/s+1}} \left ( ||E_{(j+i-1)h}\h|-m| -||E_{(j+i)h}\h|-m| \right )\\
    &\le C \frac{k h^s}{|t_1-t_2|^{s^2/s+1}} P(E_0)+ C |t_1-t_2|^{s/s+1}(P(E_{t_1}\h)-P(E_{t_2}\h))\\
    & \ + C \frac{|t_1-t_2|^{s/s+1}}{h^{s/s+1}} \left ( ||E_{t_1}\h|-m| -||E_{t_2}\h|-m| \right ).
\end{align*}
Now, by \eqref{dis_per} and \eqref{dis_pen}, we get
\begin{equation*}
    |E_{t_1}\h \triangle E_{t_2}\h| \le C |t_1-t_2|^{s/s+1} P(E_0),
\end{equation*}
where we used that $k h^s \le 2 |t_1-t_2| h^{s-1} \le 2|t_1-t_2|^s$ since $h \le |t_1-t_2|$.
\end{proof}

We are now able to prove the existence of fractional flat flows, defined as $L^1_{loc}-$limit points of the discrete flow previously introduced.  
\begin{proposition}\label{flat flows}
    Let $E_0$ be a bounded initial set of finite fractional perimeter and volume $m$. For any $h>0$, let $\{ E_t\h \}_{t\ge 0}$ be a discrete flow. Then, there exist a family of sets $\{  E_t\}_{t\ge 0}$ of finite fractional perimeter  and a subsequence $h_k\to 0$ such that, as $k\to \infty$, it holds
    $$  E^{(h_k)}_t\to E_t \quad \text{in } L^1_{loc}, \quad \text{for all }t\in [0,+\infty).  $$
    Moreover, the flow satisfies $\forall\, 0\le s\le t$,
    \begin{align*}
        |E_t\triangle E_s|&\le C |t-s|^{\frac s{1+s}},\\
        P^s(E_t)&\le P^s(E_0).
    \end{align*}
\end{proposition}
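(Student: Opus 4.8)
The plan is a standard Ascoli--Arzelà / diagonal compactness argument, built on the two uniform estimates already available: the bound $P^s(E_t^{(h)})\le P^s(E_0)$ from \eqref{dis_per} and the uniform Hölder-in-time estimate \eqref{holder estimate}. The only genuine care needed is that the sets $E_t^{(h)}$ are not contained in a fixed ball uniformly in $h$ and $t$, so one must work throughout with $L^1_{loc}$ convergence and localize the Compactness theorem by intersecting with balls.

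\emph{Step 1 (compactness at a dense set of times).} Fix a countable dense set $D\subset[0,+\infty)$, say $D=\Q\cap[0,+\infty)$, and fix $R\in\N$. For every $t\in D$ and every $h$, the submodularity of $P^s$ gives $P^s(E_t^{(h)}\cap B_R)\le P^s(E_t^{(h)})+P^s(B_R)\le P^s(E_0)+P^s(B_R)$, and $E_t^{(h)}\cap B_R\subset B_R$. By the Compactness theorem, for each pair $(t,R)$ one can extract an $L^1$-convergent subsequence; a diagonal extraction over $t\in D$, $R\in\N$ and a sequence $h\to0$ produces a single subsequence $h_k\to0$ and, for each $t\in D$, a set $E_t$ of finite fractional perimeter with $E_t^{(h_k)}\to E_t$ in $L^1_{loc}$ (the limits for different $R$ are consistent, because $L^1(B_{R'})$-convergence restricts to $L^1(B_R)$-convergence, so they glue to a well-defined $E_t$). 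Lower semicontinuity of $P^s$ under $L^1_{loc}$ convergence then yields $P^s(E_t)\le P^s(E_0)$ for all $t\in D$.

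\emph{Step 2 (extension to all times).} Passing to the limit $k\to\infty$ in \eqref{holder estimate}, intersected with an arbitrary $B_R$ and then letting $R\to\infty$ by monotone convergence, gives $|E_{t_1}\triangle E_{t_2}|\le C|t_1-t_2|^{s/(1+s)}$ for all $t_1,t_2\in D$. Hence for any $t\in[0,+\infty)\setminus D$ and any sequence $D\ni t_j\to t$, the sets $E_{t_j}$ are Cauchy in $L^1_{loc}$, so they converge to a set $E_t$ which is independent of the chosen sequence; the bound $P^s(E_t)\le P^s(E_0)$ and the Hölder estimate $|E_t\triangle E_{t'}|\le C|t-t'|^{s/(1+s)}$ for all $t,t'\ge0$ pass to this limit by lower semicontinuity and by continuity of the symmetric difference. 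To see that the selected subsequence converges for every $t$, not just $t\in D$: given $R$ and $\varepsilon>0$, pick $t_j\in D$ with $C|t-t_j|^{s/(1+s)}<\varepsilon$ and estimate
\[
|(E_t^{(h_k)}\triangle E_t)\cap B_R|\le |E_t^{(h_k)}\triangle E_{t_j}^{(h_k)}|+|(E_{t_j}^{(h_k)}\triangle E_{t_j})\cap B_R|+|(E_{t_j}\triangle E_t)\cap B_R|\le 2\varepsilon+o_k(1),
\]
using \eqref{holder estimate} for the first term, Step 1 for the second and Step 2 for the third; letting $k\to\infty$ and then $\varepsilon\to0$ finishes the proof.

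The main (and essentially only) obstacle is the lack of a uniform spatial bound on the supports of the $E_t^{(h)}$: this is exactly why one cannot hope for $L^1$ convergence and must instead localize, controlling $P^s(E_t^{(h)}\cap B_R)$ by submodularity, applying the Compactness theorem on each $B_R$, and reassembling the limits into a single flow. Once \eqref{dis_per} and \eqref{holder estimate} are granted, everything else is routine bookkeeping.
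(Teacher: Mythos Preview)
Your proposal is correct and follows essentially the same route as the paper's proof: a diagonal compactness argument on a countable dense set of times using \eqref{dis_per}, passing \eqref{holder estimate} to the limit, and then extending to all times by a continuity (three-$\varepsilon$) argument. Your treatment is in fact more careful than the paper's short sketch, since you make explicit the localization step (intersecting with $B_R$ and using submodularity of $P^s$) needed to apply the Compactness theorem in the absence of a uniform spatial bound, which the paper leaves implicit.
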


\begin{proof}
We follow \cite[Theorem 2.2]{MSS}. Considering $t\in\Q^+$, from \eqref{dis_per} and the compactness of sets of finite fractional perimeter, we find a subsequence $h_k\to 0$ such that 
\[ L^1_{loc}-\lim_{k\to \infty}E_t^{(h_k)}=E_t\quad \forall t\in \Q^+. \]
By the triangular inequality, it's easy to see that \eqref{holder estimate} passes to the limit. Finally, a simple continuity argument implies that the whole sequence $E^{(h_k)}_t$ converges to the sets $E_t$ for all $t\in [0,+\infty)$.
\end{proof}

If we assume the following hypothesis:
\begin{enumerate}
    \item \label{HP}
    given $T>0$ and an initial bounded set $E_0$ of finite fractional perimeter, there exists $R>0$ independent of $h$ such that $E_t\h\subset B_R$ for all $h>0, t\in [0,T]$,
\end{enumerate}
we are able to prove that the flat flow is indeed volume-preserving.
\begin{corollary}
    
    Under hypothesis \ref{HP}, let $E_0$ be a bounded initial set of finite fractional perimeter and volume $m$. For any $h>0$, let $\{ E_t\h \}_{t\ge 0}$ be a discrete flow. Then, there exist a family of sets $\{  E_t\}_{t\ge 0}$ of finite fractional perimeter  and a subsequence $h_k\to 0$ such that, as $k\to \infty$, it holds
    $$  E^{(h_k)}_t\to E_t \quad \text{in } L^1 \quad \text{for all }t\in [0,+\infty).  $$
    Moreover, the flow satisfies $\forall\, 0\le s\le t$,
    \begin{align*}
        |E_t|&=|E_0|\\
        |E_t\triangle E_s|&\le C |t-s|^{\frac s{1+s}},\\
        P^s(E_t)&\le P^s(E_0).
    \end{align*}
\end{corollary}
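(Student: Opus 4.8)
The plan is to re-run the compactness scheme of Proposition \ref{flat flows}, but to exploit hypothesis \ref{HP} to upgrade the convergence from $L^1_{loc}$ to $L^1$, and then to read off the volume constraint in the limit directly from the penalization bound \eqref{dis_pen}.

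First I would fix $T>0$ and invoke hypothesis \ref{HP} to obtain $R=R(T,E_0)>0$ with $E_t\h\subset B_R$ for every $h>0$ and every $t\in[0,T]$. Running verbatim the argument of Proposition \ref{flat flows} — apply \eqref{dis_per} together with the compactness theorem for sets of equibounded fractional perimeter contained in the fixed ball $B_R$, then diagonalize over the countable set $\Q^+\cap[0,T]$ — produces a subsequence $h_k\to 0$ and sets $E_t\subset B_R$ with $E_t^{(h_k)}\to E_t$ in $L^1$ for all $t\in\Q^+\cap[0,T]$; the $L^1_{loc}$ convergence of Proposition \ref{flat flows} becomes genuine $L^1$ convergence precisely because everything is confined to $B_R$. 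The Hölder estimate \eqref{holder estimate} passes to the limit by the triangle inequality, and this Hölder-in-time bound lets me define $E_t$ for all $t\in[0,T]$ and, by the usual density and continuity argument, extend the convergence $E_t^{(h_k)}\to E_t$ in $L^1$ to every $t\in[0,T]$. A second diagonal extraction over $T\in\N$ yields a single subsequence valid on $[0,+\infty)$. The perimeter bound $P^s(E_t)\le P^s(E_0)$ is then immediate from \eqref{dis_per} and the lower semicontinuity of $P^s$.

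It remains to show $|E_t|=|E_0|=m$, and this is the one place where hypothesis \ref{HP} does real work. From \eqref{dis_pen} we have $\bigl||E_t\h|-m\bigr|\le h^{s/(1+s)}P^s(E_0)$ for all $t\ge 0$ and all $h>0$, hence $|E_t^{(h_k)}|\to m$ as $k\to\infty$. Since $\chi_{E_t^{(h_k)}}\to\chi_{E_t}$ in $L^1(\R^N)$ — all functions supported in $B_R$ — we also have $|E_t^{(h_k)}|\to|E_t|$, and therefore $|E_t|=m$ for every $t\in[0,+\infty)$. Note that \eqref{dis_pen}, although uniform in $h$, would be vacuous for the mere $L^1_{loc}$ limit; confinement to $B_R$ is exactly what converts it into exact volume conservation. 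I do not expect any serious obstacle: the argument is soft, and the only points requiring attention are the bookkeeping in the double diagonal extraction (over $\Q^+$ and over $T\in\N$) and the verification that the limit sets inherit the inclusion $E_t\subset B_R$, so that the continuity-in-time extension is legitimate.
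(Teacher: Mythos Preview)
Your proposal is correct and follows essentially the same approach as the paper: re-run the argument of Proposition \ref{flat flows}, use hypothesis \ref{HP} to upgrade $L^1_{loc}$ to $L^1$ convergence, and then read off volume conservation from the penalization bound. The only cosmetic differences are that the paper cites \eqref{3.15} rather than its iterated consequence \eqref{dis_pen} for the volume step, and does not spell out the double diagonal extraction over $T\in\N$ that you (correctly) include to handle the $T$-dependence of $R$.
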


\begin{proof}
    The proof is analogous to the one of Proposition \ref{flat flows}. By uniform boundedness, the limits are in $L^1$ instead of $L^1_{loc}$. Then, passing to the limit $h\to 0$ in \eqref{3.15} we conclude that $|E_t|=m$ for all $t\ge0$.
\end{proof}

%%%%% Biblio

\printbibliography 
\end{document}